\newtheorem{thm}{Theorem}[section]
\newtheorem{prop}[thm]{Proposition}
\newtheorem{lem}[thm]{Lemma}
\newtheorem{cor}[thm]{Corollary}
\newtheorem{con}{Conjecture}
\theoremstyle{definition}
\newtheorem{rem}{Remark}
\newtheorem{prob}{Problem}
\newtheorem{ex}{Example}
\newcommand{\R}{\mathbb{R}}
\newcommand{\N}{\mathbb{N}}
\newcommand{\Pref}{\operatorname{Pref}}
\newcommand{\Suf}{\operatorname{Suf}}
\newcommand{\Sub}{\operatorname{Sub}}
\begin{document}
\title[On monochromatic arithmetic progressions in binary words]{On monochromatic arithmetic progressions in binary words associated with pattern sequences}

\author{Bartosz Sobolewski}
\address{Institute of Mathematics, Faculty of Mathematics and Computer Science, Jagiellonian University in Krak\'{o}w, \\
 \L{}ojasiewicza 6, 30-348, Krak\'{o}w, Poland}
\email{bartosz.sobolewski@uj.edu.pl}
\keywords{monochromatic arithmetic progression, pattern sequence, Rudin--Shapiro sequence, Thue--Morse sequence}

\begin{abstract}
Let $e_v(n)$ denote the number of occurrences of a fixed pattern $v$ in the binary expansion of $n \in \N$. In this paper we study monochromatic arithmetic progressions in the class of binary words $(e_v(n) \bmod{2})_{n \geq 0}$, which includes the famous Thue--Morse word $\mathbf{t}$ and Rudin--Shapiro word $\mathbf{r}$. We prove that the length of a monochromatic arithmetic progression of difference $d \geq 3$ starting at $0$ in $\mathbf{r}$ is at most $(d+3)/2$, with equality for infinitely many $d$. Moreover, we compute the maximal length of a monochromatic arithmetic progression in $\mathbf{r}$ of difference $2^k-1$ and $2^k+1$. For a general pattern $v$ we provide an upper bound on the length of a monochromatic arithmetic progression of any difference $d$. We also prove other miscellaneous results and offer a number of related problems and conjectures.
\end{abstract}

\maketitle

\section{Introduction} \label{sec:introduction}

Let $\mathbf{t} = (t_n)_{n \geq 0}$ be the Thue--Morse sequence, given by $t_n = s_2(n) \bmod{2}$, where $s_2$ denotes the sum of binary digits. There is a vast literature devoted to the properties of $\mathbf{t}$. For a survey concerning its various appearances in seemingly unrelated problems see Allouche and Shallit \cite{AS99}. A comprehensive treatment in the setting of automatic sequences is given in the monograph by the same authors \cite{AS03}.

The sequence $\mathbf{t}$ can be identified with an infinite binary word or a $2$-coloring of the set $\N$ of nonnegative integers, namely a partition into two sets of indices $n$ such that $t_n=0$ and $t_n =1$, respectively. By a celebrated result of van der Waerden \cite{vdW27}, the coloring $\mathbf{t}$ contains arbitrarily long finite monochromatic arithmetic progressions, that is, progressions $n,n+d,\ldots,n+ld $ such that $t_n = t_{n+d} =  \cdots = t_{n+ld}$.
In fact, Avgustinovich, Cassaigne and Frid \cite{ACF06} proved a stronger property of $\mathbf{t}$, namely that all finite binary words occur in $\mathbf{t}$ along arithmetic progressions.

These results, however, do not tell how long a monochromatic arithmetic progression of fixed difference $d$ can be. Morgenbesser, Shallit, and Stoll \cite{MSS11} addressed this problem for progressions starting at $0$ and gave a sharp upper bound on their maximal length. Here and in the sequel we let $A_{\mathbf{t}}(n,d)$ denote the maximal length of a monochromatic arithmetic progression in $\mathbf{t}$ of difference $d$ starting at $n$, that is,
$$ A_{\mathbf{t}}(n,d) = \inf\{l \geq 1: t_{n+ld} \neq t_n \}.    $$
The main result of \cite{MSS11} is the following. (In the original statement the notation $f(d)$ was used instead of $A_{\mathbf{t}}(0,d)$.)

\begin{thm}[\cite{MSS11}] \label{thm:MSS11}
For all $d \in \N_{+}$ we have
$$A_{\mathbf{t}}(0,d) \leq d+4.$$
Moreover,
\begin{enumerate}[label={\textup{(\roman*)}}]
\item $A_{\mathbf{t}}(0,d) = d+4$ if and only if $d = 2^{k}-1$ for some $k \geq 2$ even;
\item there exist no $d$ such that $A_{\mathbf{t}}(0,d)=d+3$ or $A_{\mathbf{t}}(0,d) = d+2$;
\item $A_{\mathbf{t}}(0,d) = d+1$ if and only if $d=6$;
\item $A_{\mathbf{t}}(0,d) = d$ if and only of $d=1$ or $d=2^k+1$ for some $k \geq 2$.
\end{enumerate}
Furthermore, for any $d \in \N_{+}$ there exists $n \leq d+4$ such that $t_{dn}=1$ and $s_2(n)=3$.
\end{thm}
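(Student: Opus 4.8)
The plan is to prove the final assertion constructively, by producing for each $d$ an explicit multiplier $n$ with exactly three binary ones in the range $n \le d+4$ for which $s_2(nd)$ is odd. First I would reduce to $d$ odd with $d \ge 3$. If $d = 2^k$ is a power of two with $d \ge 4$, one takes $n = 7$, since $s_2(7) = 3$, $t_{dn} = t_7 = 1$, and $7 \le d+4$. If $d = 2^k d'$ with $d'$ odd and $d' \ge 3$, then $t_{dn} = t_{d'n}$ while $d+4 \ge d'+4$, so any witness $n \le d'+4$ found for $d'$ also serves $d$. (The two values $d \in \{1,2\}$ are genuinely degenerate, as no $n \le d+4$ has three binary ones; the assertion is to be read for $d \ge 3$.) Hence it suffices to treat odd $d \ge 3$.

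For odd $d$ set $m = \lfloor \log_2 d \rfloor$, so $2^m \le d < 2^{m+1}$ and in fact $d \ge 2^m + 1$. I would look for the multiplier among the three-one numbers sharing the top bit of $d$, namely $n = 2^m + r$ with $s_2(r) = 2$ and $0 < r \le (d - 2^m) + 4$ (which guarantees $s_2(n) = 3$ and $n \le d+4$), reserving the boundary multiplier $n = 2^{m+1} + 3 = d+4$ for the all-ones case $d = 2^{m+1} - 1$. To compute $t_{nd}$ I would write $nd = 2^m d + rd$ and invoke Kummer's theorem in base $2$ in the form $s_2(x + y) \equiv s_2(x) + s_2(y) + \kappa(x,y) \pmod 2$, where $\kappa(x,y)$ counts the carries in the binary addition of $x$ and $y$. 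For the decisive cases the admissible $r$ are small, so $rd$ occupies only a bounded window of high-order positions overlapping the block $[m, 2m]$ of $2^m d$; the carry count $\kappa(2^m d, rd)$ is then governed by the top bits of $d$ together with the choice of $r$, and $t_{nd}$ reduces to $t_d$ together with a low-order term $t_{rd}$ and a boundary carry depending on $r$.

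The heart of the matter, and the main obstacle, is to show that as $r$ ranges over the admissible two-one values the parities $s_2(nd)$ cannot all be even: perturbing the two low-order ones of $n$ alters the local carry pattern at the top of $d$ and flips $t_{nd}$, so some admissible $n$ yields $t_{nd} = 1$. Making this uniform requires a case analysis organized by the length of the run of low-order ones of $d$ (equivalently by $d \bmod 2^j$), since that run dictates how carries propagate into the overlap window. The tight cases are exactly the extremal families $d = 2^k + 1$ and $d = 2^k - 1$ appearing in parts (iv) and (i): when $d = 2^k + 1$ the admissible set shrinks to $r \in \{3,5\}$ and one checks directly that $n = 2^k + 5$ works for $k \ge 3$ while $n = 2^k + 3$ works for $k = 2$; when $d = 2^k - 1$ every candidate with top bit $2^m$ fails, and one must use the boundary witness $n = d+4 = 2^k + 3$, which has three ones and satisfies $s_2((d+4)d)$ odd, in agreement with $A_{\mathbf{t}}(0,d) = d+4$ from part (i). Checking that outside these extremal families the richer candidate set always contains a three-one witness, uniformly in the residue type of $d$, is the step that demands the most careful carry bookkeeping.
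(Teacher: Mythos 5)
You should first note that this paper does not prove Theorem \ref{thm:MSS11} at all: it is quoted verbatim from \cite{MSS11}, so there is no in-paper proof to match; the closest thing is Section \ref{sec:main1}, where the author adapts the method of \cite{MSS11} to the Rudin--Shapiro case. Measured against that method, your proposal has a genuine gap, and it sits exactly where you admit it does. Your plan is a single-witness perturbation argument: vary $r$ over the two-one values and claim that the parities $s_2((2^m+r)d)$ cannot all be even. You never establish this, and it is the entire difficulty --- carries propagating through long runs of ones in $d$ are precisely what make the claim non-uniform, which is why your ``residue-type case analysis'' is announced rather than executed. The mechanism that actually tames this in \cite{MSS11}, and which the paper reuses in Example \ref{ex:basic_concept} and Lemma \ref{lem:main_idea}, is different in kind: one chooses \emph{two} multipliers $m,n$ whose binary expansions interact in a controlled window and proves the exact three-term identity that $s_2(md)+s_2(nd)+s_2((m+n)d)$ is odd (via the carry formula you cite), so that at least one of the three terms is forced to be odd without ever deciding which. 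Replacing your ``some candidate must flip'' heuristic by this identity is not a cosmetic change; without it your carry bookkeeping has no closed form to verify, case by case or otherwise.

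Two further points. First, even if your missing step were completed, you would have proved only the inequality $A_{\mathbf{t}}(0,d)\leq d+4$ and the final assertion; parts (i)--(iv) are if-and-only-if characterizations (including the non-existence of $d+3$ and $d+2$, and the sporadic $d=6$), and your proposal uses them as known inputs for consistency checks rather than proving them, so the scope of the theorem is not covered. Second, your sketch contains concrete errors in the extremal families: for $d=2^k-1$ with $k$ odd the claim that every candidate with top bit $2^m$ fails is false (take $d=7$, $n=7=[111]_2$: $49=[110001]_2$ has $s_2=3$, so $t_{49}=1$), and the boundary witness $n=d+4=2^k+3$ gives $s_2((d+4)d)=k+1$, which is odd only for $k$ even --- consistent with part (i), but contradicting your uniform treatment of the all-ones case. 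Your observation that the final assertion, as quoted, degenerates for $d\in\{1,2\}$ (no $n\leq d+4$ has $s_2(n)=3$) is correct and worth flagging, though it concerns the transcription of the statement rather than the proof.
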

In the same paper it is also observed that $\mathbf{t}$ does not contain any infinite monochromatic arithmetic progressions (as a consequence of a result of Gelfond \cite{Gel67}).

Another theorem on monochromatic arithmetic progressions in $\mathbf{t}$ and its generalizations to other bases was obtained by Parshina \cite{Par17}. She showed that the lengths of all monochromatic progressions of difference $d$ are bounded by a function of the length of binary expansion $d$. More precisely, define
$$ A_{\mathbf{t}}(d) = \sup_{n \in \N} A_{\mathbf{t}}(n,d).   $$
In the case of the Thue--Morse sequence, Parshina's result is the following.

\begin{thm}[\cite{Par17}] \label{thm:Par17}
For all integers $k \geq 1$ we have
$$  \max_{1 \leq d \leq 2^k-2} A_{\mathbf{t}}(d)  \leq  2^k$$
and
$$   A_{\mathbf{t}}(2^k-1) =  \begin{cases}
2^k + 4, &\text{if } k \equiv 0 \pmod{2},\\
2^k &\text{otherwise}.
\end{cases} $$
\end{thm}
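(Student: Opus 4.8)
The plan is to translate monochromaticity into a statement about carries and then to exploit the self-similar ``counter'' dynamics of adding $d=2^k-1$. By Kummer's theorem, $s_2(a+b)=s_2(a)+s_2(b)-c(a,b)$, where $c(a,b)$ is the number of carries produced when adding $a$ and $b$ in base $2$; hence
$$ t_{m+d}=t_m\oplus t_d\oplus c(m),\qquad c(m):=c(m,d)\bmod 2. $$
Consequently the progression $n,n+d,\dots$ is monochromatic exactly as long as $c(n+jd)=t_d$, so $A_{\mathbf t}(n,d)$ is the length of the initial run of $(c(n+jd))_{j\ge 0}$ equal to $t_d$. A convenient first reduction uses $t_{2m}=t_m$ and $t_{2m+1}=\overline{t_m}$: an AP of even difference $2d'$ is, after stripping the last bit of the starting point, an AP of difference $d'$ (possibly in the complemented word), whence $A_{\mathbf t}(2d')=A_{\mathbf t}(d')$. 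Thus in the bound $\max_{1\le d\le 2^k-2}A_{\mathbf t}(d)\le 2^k$ one may assume $d$ odd, so $d$ is invertible modulo $2^k$ and the low $k$ bits of $n+jd$ run through all residues as $j$ ranges over a block of length $2^k$.

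For the all-ones difference $d=2^k-1$ the carry dynamics become transparent. Splitting $m=2^k q+r$ with $0\le r<2^k$ gives $t_m=t_q\oplus t_r$, and adding $2^k-1$ acts as a counter: if $r\ge 1$ then $(q,r)\mapsto(q+1,r-1)$, while if $r=0$ then $(q,r)\mapsto(q,2^k-1)$ (a \emph{reset}). Using $t_{q+1}\oplus t_q\equiv 1+\tau(q)$ and $t_{r-1}\oplus t_r\equiv 1+\nu(r)\pmod 2$, where $\tau(q)$ is the number of trailing $1$'s of $q$ and $\nu(r)=v_2(r)$, the value $t_m$ is preserved across a non-reset step iff $\tau(q)\equiv\nu(r)\pmod 2$, and across a reset it changes by $t_{2^k-1}=k\bmod 2$. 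The decisive observation is that on the ``complementary descent'' $q+r=2^k-1$ (so $r$ is the $k$-bit complement of $q$) one has $\nu(r)=\tau(q)$ identically; hence the $2^k$ terms with $m=(j+1)(2^k-1)$, $j=0,\dots,2^k-1$, are all monochromatic, of common colour $t_q\oplus t_{\overline q}=k\bmod 2$.

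Since a reset changes the colour by $k\bmod 2$, for $k$ odd every reset breaks monochromaticity, confining a run to a single descent; the complementary descent shows this optimum equals $2^k$, giving $A_{\mathbf t}(2^k-1)=2^k$. For $k$ even a reset is colour-preserving, so runs may cross a reset: the complementary descent extends by two further terms across its upper reset before $\tau(q)\not\equiv\nu(r)$ forces a change, and by the reflection symmetry $t_{2^N-1-m}=t_m\oplus(N\bmod 2)$ the same gain is available below, provided the window is placed in the interior rather than against $0$. Starting at $n=0$ is floored at the bottom and yields only $2^k+3=d+4$, recovering Theorem~\ref{thm:MSS11}; choosing $n$ so that both resets have room produces $2^k+4$, and a case analysis over all admissible window positions should show this cannot be beaten, giving $A_{\mathbf t}(2^k-1)=2^k+4$ for even $k$.

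For general odd $d<2^k$ with $d\ne 2^k-1$, the same carry calculus applies but without the clean counter picture; here I would bound the length of a run on which $c(n+jd)=t_d$ by tracking, as a state in a finite automaton reading successive additions of $d$, the carry into the positions above the top bit of $d$ together with the relevant low bits, the point being that the all-ones differences are exactly the configurations saturating this automaton. I expect the main obstacle to be twofold. First, pinning down the exact boundary contribution ``$+4$'' for even $k$, which requires a careful optimisation over where the window meets the two resets together with a matching argument that no other configuration does better. Second, making the automaton/pigeonhole argument uniform across all odd $d<2^k$: here the low bits of $n+jd$ and the propagating carry genuinely interact, and one must rule out that some non-all-ones $d$ sustains the carry-parity condition for more than $2^k$ steps.
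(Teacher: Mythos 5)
This theorem is quoted by the paper from Parshina \cite{Par17}; the paper contains no proof of it (only the remark that the displayed form incorporates \cite[Lemma 1]{Par17}), so your proposal must be judged on its own terms rather than against an internal argument. Your preparatory material is sound: the carry identity $s_2(a+b)=s_2(a)+s_2(b)-c(a,b)$ and the resulting criterion that the run continues exactly while $c(n+jd)\equiv t_d \pmod 2$, the reduction $A_{\mathbf{t}}(2d')=A_{\mathbf{t}}(d')$, and the counter dynamics for $d=2^k-1$ (non-reset steps preserve colour iff $\tau(q)\equiv\nu_2(r)\pmod 2$, resets change it by $k\bmod 2$) are all correct. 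In particular your treatment of odd $k$ is complete: among any $2^k$ consecutive steps the residues $r$ of the step-starting terms sweep all of $\{0,\dots,2^k-1\}$, so some step is a reset and breaks the colour, giving the upper bound $2^k$, while the complementary descent $q+r=2^k-1$ with $\nu_2(r)=\tau(q)$ realises a monochromatic run of exactly $2^k$ terms of colour $k\bmod 2$. That settles $A_{\mathbf{t}}(2^k-1)=2^k$ for odd $k$.

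The remaining two thirds of the statement, however, are not proved. For even $k$, your extension of the descent across the upper reset correctly yields $2^k+2$ further-extendable terms (and, anchored at $0$, recovers $A_{\mathbf{t}}(0,2^k-1)=2^k+3$), but the lower bound $2^k+4$ requires an explicit $n$ whose run gains two terms at \emph{both} ends simultaneously; the reflection $t_{2^N-1-m}=t_m\oplus (N\bmod 2)$ only transports the top gain of one run into a bottom gain of a \emph{different} run, and you never verify that a single window enjoys both. Much more seriously, the matching upper bound $A_{\mathbf{t}}(2^k-1)\leq 2^k+4$ is dismissed with ``a case analysis over all admissible window positions should show this cannot be beaten,'' which is exactly the nontrivial content: one must exclude runs crossing two resets and runs meeting a reset in other $(q,r)$ configurations, and nothing in the proposal does this. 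Likewise the first inequality, $\max_{1\leq d\leq 2^k-2}A_{\mathbf{t}}(d)\leq 2^k$ --- the bulk of Parshina's theorem --- is offered only as a research plan: the finite automaton tracking carries above the top bit of $d$ together with the low bits is never constructed, no bound on run lengths through its states is derived, and the pivotal claim that the all-ones differences are ``exactly the configurations saturating this automaton'' is asserted, not argued; the observation that odd $d$ is invertible modulo $2^k$ gives equidistribution of the low bits but by itself controls nothing about the propagating carries, as you yourself concede. As it stands, the proposal proves one of the three assertions of the theorem and sketches plausible but incomplete strategies for the other two.
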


We remark the second part of the statement is obtained by incorporating \cite[Lemma 1]{Par17}.
In comparison to Theorem \ref{thm:MSS11}, the assumption about the progression starting at $n = 0$ is relaxed, at the cost of losing a more precise bound for each individual $d$.

The expression for $A_{\mathbf{t}}(2^k-1)$ was proved in another way by Aedo, Grimm, Nagai, Staynova \cite{AGNS22} together with the following theorem (and other results concerning a class of Thue--Morse-like sequences).

\begin{thm}[\cite{AGNS22}] \label{thm:AGNS22}
For all integers $k \geq 2$ we have $A_{\mathbf{t}}(2^k+1) = 2^k+2$.
\end{thm}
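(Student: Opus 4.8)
The plan is to translate monochromaticity into a condition on binary carries and then to analyze those carries through the self-similar behaviour of the $2$-adic valuation. Write $c(a,b) = s_2(a)+s_2(b)-s_2(a+b)$ for the number of carries produced when $a$ and $b$ are added in base $2$, and let $\oplus$ denote addition modulo $2$. Since $s_2(a+b) \equiv s_2(a)+s_2(b)-c(a,b) \pmod 2$, we have $t_{a+b} = t_a \oplus t_b \oplus (c(a,b)\bmod 2)$. For $d = 2^k+1$ (with $k \geq 2$) one has $t_d = 0$, so this reduces to $t_{a+d} = t_a \oplus (c(a,d)\bmod 2)$. Hence the progression $n, n+d, n+2d, \dots$ stays monochromatic exactly as long as $c(n+jd, d)$ is even, and $A_{\mathbf t}(n,d)$ equals one plus the number of initial consecutive $j \geq 0$ with $c(n+jd,d)$ even. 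The whole problem therefore becomes: over all $n$, how long can a run of consecutive $j$ with $c(n+jd,d)$ even be?

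The next step is to compute these carries explicitly. Letting $p$ denote the number of trailing $1$s of $a$, a short analysis of the two carry chains that the digits of $d=2^k+1$ (sitting at positions $0$ and $k$) launch into $a$ gives
$$ c(a,2^k+1) = \begin{cases} p+q, & p<k,\\ k+1+q', & p=k,\\ p, & p>k, \end{cases} $$
where $q$ and $q'$ count the consecutive $1$s of $a$ starting at position $k$ and $k+1$ respectively. Crucially, adding $d$ acts on the lowest $k$ bits as $+1 \pmod{2^k}$, so these bits run through a counter of period $2^k$. Writing $n = N2^k+m$ with $0 \le m < 2^k$, one gets $n+jd = (N+j+\varepsilon_j)2^k + ((m+j)\bmod 2^k)$ with $\varepsilon_j = \lfloor (m+j)/2^k\rfloor$, which increases by $1$ at each wrap of the low counter (once every $2^k$ steps). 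Away from the single step per period at which the low block equals $2^k-1$, we are in the regime $p<k$, and since trailing-ones counts are $2$-adic valuations, the even-carry condition becomes $\nu_2(m+j+1) \equiv \nu_2(N+j+\varepsilon_j+1) \pmod 2$.

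The heart of the matter is thus to compare two ``$2$-adic clocks'', $x = m+j+1$ and $y = N+j+\varepsilon_j+1$, whose difference $\delta_j = m-N-\varepsilon_j$ is constant within a period. For odd $\delta$, $x$ and $y$ have opposite parities, forcing $\nu_2(x)\equiv\nu_2(y)$ to fail after very few steps; for even $\delta$ the condition descends to $x/2,y/2$, so a synchronized run can only last on the order of $2^{\nu_2(\delta)}$ steps. The essential brake is that $\delta_j$ itself drops by $1$ at each wrap, so parity-synchronization cannot survive past one period of length $2^k$; combining this with the contribution of the unique ``all $1$s'' step (governed by the $p\ge k$ branches of the formula) should show that at most $2^k+1$ consecutive steps can have even carry, i.e.\ $A_{\mathbf t}(n,2^k+1) \le 2^k+2$ for every $n$. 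I expect the main obstacle to be exactly this bookkeeping at the period boundary: pinning the constant down to $2^k+2$, rather than $2^k+O(1)$, requires tracking precisely how the recursive valuation agreement meshes with the boundary step, and this is where the arithmetic of $d$ (for instance $d \mid 2^{2k}-1$) may enter.

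For the matching lower bound I would exhibit an explicit extremal $n$, generalizing the case $k=2$, where $n=43$ realizes $A_{\mathbf t}(5)=6=2^2+2$ via $43,48,53,58,63,68$ (digit sums $4,2,4,4,6,2$, all even) with the first odd carry occurring at step $j=5=2^k+1$. The recipe is to select $m$ and $N$ so that the two clocks stay parity-synchronized across a full period and the single all-$1$s step also yields an even carry; that the first failure lands precisely at step $2^k+1$ is then a finite verification using the explicit formula for $c$. Together with the uniform upper bound, this gives $A_{\mathbf t}(2^k+1)=2^k+2$.
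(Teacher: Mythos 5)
First, a point of reference: the paper does not prove this statement at all --- Theorem~\ref{thm:AGNS22} is quoted from \cite{AGNS22} --- so your attempt can only be compared with the cited source's block-structure method, mirrored in this paper's Section~\ref{sec:main2_main3}, where progressions are analyzed by writing $n = 2^k m + l$, splitting into cases on $l$, and invoking subword-rigidity lemmas. Your carry-counting route is genuinely different in flavor, and its reductions are correct as far as they go: the identity $t_{a+b} = t_a \oplus t_b \oplus (c(a,b) \bmod 2)$, the observation $t_d = 0$ for $d = 2^k+1$, the criterion that $A_{\mathbf{t}}(n,d)-1$ equals the length of the initial run of $j$ with $c(n+jd,d)$ even, the three-case carry formula, and the translation (away from wrap steps) into $\nu_2(m+j+1) \equiv \nu_2(N+j+\varepsilon_j+1) \pmod{2}$ all check out, and are consistent with the extremal example $n=43$, $k=2$.

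However, what you have is a program, not a proof, and the gaps sit exactly where the theorem lives. For the upper bound, your regime-by-regime analysis yields only $A_{\mathbf{t}}(2^k+1) \leq 2^k + O(1)$: with $\delta$ odd the valuation-parity condition is not instantly violated but merely fails ``after very few steps'' (e.g.\ with $\delta = -1$ it holds at $x=3$ and at $x=4$), so a maximal run could a priori assemble a short tail of a $\delta = 1$ period, a wrap step, the full interior of a $\delta = 0$ period ($2^k-1$ steps), a second wrap step, and a short head of a $\delta = -1$ period --- naively up to $2^k+5$ even-carry steps, whereas the theorem demands at most $2^k+1$. Closing this requires proving that the wrap-branch parities ($c = k+1+q'$, resp.\ $c = p$) cannot be even simultaneously with even-carry steps on both sides of both period boundaries; you explicitly defer this bookkeeping (``should show'', ``I expect the main obstacle''), so the bound $A_{\mathbf{t}}(2^k+1) \leq 2^k+2$ is not established. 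For the lower bound, a recipe plus ``a finite verification'' performed separately for each $k$ is not a proof for all $k \geq 2$; an explicit family is needed, and one does exist close to your seed: $n_k = 3 \cdot 2^{2k} - 2^k - 1$ (so $n_2 = 43$) satisfies $n_k + jd = 3\cdot 2^{2k} + (j-1)d$, and since $(j-1)d = (j-1)2^k + (j-1)$ has disjoint digit blocks for $1 \leq j \leq 2^k$, one gets $s_2(n_k+jd) = 2 + 2s_2(j-1)$, even, while $s_2(n_k) = 2k$ and $s_2(n_k + (2^k+1)d) = s_2(2^{2k+2}+2^k) = 2$ are even and $s_2(n_k + (2^k+2)d) = 3$ is odd --- precisely the uniform verification your sketch still owes. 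In sum: a plausible and genuinely different strategy, but with two concrete unfinished steps, the boundary analysis for the upper bound and a general-$k$ extremal construction for the lower bound.
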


Motivated by these results, in this paper we consider another direction of generalization than those in \cite{Par17,AGNS22}, obtained by replacing the sum of binary digits $s_2$ by other pattern-counting functions. More precisely, let $v$ be a nonempty finite word consisting of the digits $0,1$. Define $e_v(n)$ to be the number of (possibly overlapping) occurrences of $v$ in the binary expansion of $n \in \N$. In this context we call $v$ a \emph{pattern} and $(e_v(n))_{n \geq 0}$ -- a \emph{pattern sequence}. Following \cite[p.\ 78]{AS03}, we adhere to the convention that if $v$ begins with precisely $i \geq 1$ zeros and but at least one nonzero digit, then in the computation of $e_v(n)$ the binary expansion of $n$ is preceded with $i$ zeros. For example, if $v=001$ and $n=9$, then we write the binary expansion of $9$ as $001001$, which gives $e_{001}(9) = 2$.
 
In particular, we have $e_1(n) = s_2(n)$ so $(e_1(n) \bmod 2)_{n \geq 0}$ is precisely the Thue--Morse sequence. We are going to focus most of our attention on a variant of the famous Rudin--Shapiro sequence $\mathbf{r} = (r_n)_{n \geq 0} = (e_{11}(n) \bmod{2})_{n \geq 0}$. This sequence is usually defined over $\{-1,1\}$ by $r_n = (-1)^{e_{11}(n)}$. We refer the reader to \cite{AS03} for an exposition regarding the Rudin--Shapiro sequence.

Similarly as before, we can treat the sequence $(e_v(n) \bmod{2})_{n \geq 0}$ as an infinite word over $\{0,1\}$ or a $2$-coloring of nonnegative integers. For $n,d \in \N$ such that $d \geq 1$, we define
\begin{align*}
 A_v(n,d) &= \inf\{l \geq 1: e_v(n + ld) \not\equiv e_v(n) \pmod{2} \}, \\
 A_v(d) &= \sup_{n \in \N} A_v(n,d).
\end{align*}
In other words, $A_v(n,d)$ is the maximal (possibly infinite) length of a monochromatic arithmetic progression of difference $d$ and starting at $n$ in said coloring. The value $A_v(d)$, if finite, is the maximal length of such a progression starting anywhere. We note that $A_v(d)$ may potentially be infinite even if all $A_v(n,d)$ are finite.
In the case of the Rudin--Shapiro sequence, namely $v=11$, we adopt the special notation $A_{\mathbf{r}}(n,d) = A_{11}(n,d)$ and $A_{\mathbf{r}}(d) = A_{11}(d)$. 

We now present the main results of this paper, which are proved in the later sections.
The first result is an analogue of Theorem \ref{thm:MSS11} for the Rudin--Shapiro sequence.

\begin{thm} \label{thm:main1}
For all integers $d \geq 3$ we have
$$  A_{\mathbf{r}}(0,d) \leq \frac{1}{2}(d+3).   $$
More precisely:
\begin{enumerate}[label={\textup{(\roman*)}}]
\item $A_{\mathbf{r}}(0,d) = (d+3)/2$ if and only if $d = 2^k-1$ for some odd $k \geq 3$ or $d=39$;
\item $A_{\mathbf{r}}(0,d) = (d+1)/2$ if and only if $d = 2^k+1$ for some $k \geq 2$;
\item $A_{\mathbf{r}}(0,d) < d/2$ otherwise.
\end{enumerate}
\end{thm}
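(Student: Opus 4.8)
The plan is to analyze the parity of $e_{11}(ld)$ as $l$ ranges over $0, 1, 2, \dots$, seeking the first $l$ where it becomes odd (since $e_{11}(0)=0$ is even, a monochromatic progression starting at $0$ stays in the "even" color until then). The key observation is that $r_{ld} = r_n$ for all $n=0$ means $e_{11}(ld)$ is even for $l=0,1,\dots,A_{\mathbf{r}}(0,d)-1$. My first step would be to understand $e_{11}$ well enough to control its parity along multiples of $d$. The cleanest handle is the identity $e_{11}(n) \equiv \sum_{i} b_i b_{i+1} \pmod 2$, where $b_i$ are the binary digits of $n$; equivalently, $r_n = r_{\lfloor n/2\rfloor} \cdot (-1)^{b_0 b_1}$ in the $\pm 1$ normalization, giving the standard recurrences $r_{2n}=r_n$, $r_{4n+1}=r_n$, $r_{4n+3}=-r_{2n+1}$. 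These recurrences let me compute $r_{ld}$ structurally rather than digit by digit.

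**Next I would** establish the general upper bound $A_{\mathbf{r}}(0,d) \le (d+3)/2$. The natural strategy, mirroring the Thue--Morse argument of Morgenbesser--Shallit--Stoll, is to exhibit, for each $d$, a small value of $l \le (d+3)/2$ with $e_{11}(ld)$ odd. I expect a carry-propagation/pigeonhole argument: among the multiples $d, 2d, \dots$ up to roughly $d/2$, I would locate one whose binary expansion contains an odd number of occurrences of the block $11$. A promising route is to pick $l$ so that $ld$ has a prescribed binary shape—e.g. forcing a single isolated block $11$ or an odd count by controlling the top digits of $ld$ via a suitable multiplier $l$ close to a power of $2$ divided by $d$. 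One then checks $l \le (d+3)/2$. The factor of $1/2$ (versus the $d+4$ bound for $\mathbf{t}$) should come from the fact that $e_{11}$ increments only when two consecutive $1$'s appear, so parity changes are roughly twice as "dense" in $l$ as for $s_2$.

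**For the exact characterizations** in (i)--(iii) I would treat the distinguished difference families separately. For $d=2^k-1$, I would use that multiplying by $2^k-1$ corresponds to subtraction $2^k l - l$, producing highly structured binary expansions (long blocks of $1$'s), and compute $e_{11}(l(2^k-1)) \bmod 2$ explicitly as a function of $l$ and $k$, pinning down the first odd value and showing it equals $(d+3)/2$ exactly when $k$ is odd (the parity of $k$ governs a telescoping sum of block contributions). For $d=2^k+1$, multiplication gives $2^k l + l$, whose digit interactions I would again track directly to obtain the first parity change at $l=(d+1)/2$. The sporadic value $d=39$ I expect to handle by direct computation. Finally, for all remaining $d$ I must prove the strict bound $A_{\mathbf{r}}(0,d) < d/2$; this is where the sharpest estimate is needed.

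**The main obstacle** will be the uniform bound in the "otherwise" case (iii): ruling out near-extremal behavior for every $d$ outside the two families (and $39$) requires showing a parity change occurs strictly before $l=d/2$, which resists a single clean formula. I anticipate splitting $d$ by its binary structure—trailing zeros, the pattern of its highest and lowest blocks—and in each regime constructing an explicit small witness $l$, while separately verifying the finitely many small $d$ by computation to catch exceptions like $39$. Controlling carries when $ld$ crosses powers of $2$, so that the induced change in the count of $11$-blocks has a predictable parity, will be the technically delicate part.
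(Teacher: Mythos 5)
Your treatment of parts (i), (ii) and the sporadic $d=39$ matches the paper's: for $d=2^k\pm 1$ the expansions $(nd)_2$ are explicitly structured, and the parity of $e_{11}(nd)$ can be computed in closed form (the paper does this for $d=2^k-1$ by writing $nd=[0w01\overline{w}1]_2$ and using the identity $|u|_{11}+|u|_{00}+|u|_{01}+|u|_{10}=|u|-1$ together with $|0w0|_{01}=|0w0|_{10}$). The genuine gap is in part (iii), precisely where you locate the ``main obstacle.'' Your plan there is to produce, in each binary-structure regime of $d$, a \emph{single} explicit witness $l<d/2$ with $e_{11}(ld)$ odd, by choosing $l$ (e.g.\ close to $2^j/d$) so that $ld$ has a prescribed shape. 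But such a choice only pins down a bounded window of high-order digits of $ld$; the bulk of the expansion is an uncontrolled block whose contribution to the count of $11$'s has no determined parity, so no single-witness construction of this kind can certify that $e_{11}(ld)$ is odd. The device the paper uses --- and which your sketch lacks, even though you invoke the Morgenbesser--Shallit--Stoll template --- is a \emph{three-term parity cancellation}: for $d$ whose expansion has prefix $p$ and suffix $s$, one finds a pair $(i,j)$ with $e_{11}(d)+e_{11}(jd)+e_{11}\bigl((2^{\ell-i}j+1)d\bigr)\equiv 1 \pmod 2$, where $\ell=\ell(d)$. This works because in the binary addition $(d)_2+(2^{\ell-i}jd)_2$ the unknown blocks $w$ (from $jd$) and $u$ (from $d$) each appear twice across the three expansions and cancel modulo $2$, while the controlled prefix/suffix window contributes an odd count; then at least one of the three indices $1$, $j$, $2^{\ell-i}j+1<d/2$ is a witness, without one ever knowing which.

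A second, smaller miscalibration: ``separately verifying the finitely many small $d$ by computation to catch exceptions like $39$'' understates what remains after the regime split. The problematic structures are \emph{infinite} families, not finitely many small $d$: the branch with prefix $100$ and suffix $001$ contains all $d=2^k+1$; the branches with suffix $1^5$ contain $d=2^k-1$ and force a suffix-reduction lemma (admissibility for suffix $a01^k$ propagates to $a01^{k+2l}$) plus eight residual prefix--suffix cases handled by bespoke additions with explicit carry bookkeeping (including subcases where an undetermined carry bit $c$ must be carried through the computation). Moreover, the prefix--suffix branching only terminates via a computer-assisted iteration producing over $500$ admissible classes, and $d=39$ emerges as the unique true exception of that search rather than from any a priori small-$d$ sweep. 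So your skeleton is right in outline, but without the three-term cancellation lemma and the systematic prefix/suffix branching, the proof of (iii) does not go through.
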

 
The next two results give an explicit expression for $A_{\mathbf{r}}(d)$ for the special cases $d = 2^k+1$ and $d = 2^k-1$.

\begin{thm} \label{thm:main2}
For all integers $k \geq 1$ we have
$$
A_{\mathbf{r}}(2^k+1) = \begin{cases}
5 &\text{if } k=1, \\
6 &\text{if } k=2, \\
9 &\text{if } k=3, \\
2^{k-1} + 2 &\text{if } k \geq 4.
\end{cases}
$$ 
\end{thm}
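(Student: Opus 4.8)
The plan is to reduce the computation of $A_{\mathbf{r}}(2^k+1)$ to a study of constant runs in the arithmetic subsequences of $\mathbf{r}$, and to analyze these through the base‑$2^k$ digit structure of the indices. Since two starting points in the same residue class modulo $d=2^k+1$ yield sequences $(r_{n+jd})_j$ that are shifts of one another, $A_{\mathbf{r}}(d)$ equals the maximal length of a block of equal values occurring in any of the $d$ subsequences $(r_{c+jd})_{j\ge 0}$, $0\le c<d$. The engine of the whole argument is the splitting identity: writing an index as $m=x+y\,2^k$ with $0\le x<2^k$, one has, modulo $2$,
\[
 r_m = r_x + r_y + [\,x \ge 2^{k-1}\,]\cdot[\,y \text{ odd}\,],
\]
where the last (Iverson‑bracket) term records the single occurrence of $11$ that can straddle the boundary between bit $k-1$ and bit $k$.

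Next I would track how the pair $(x,y)$ evolves along a subsequence. Stepping by $d=2^k+1$ sends $(x,y)\mapsto(x+1,y+1)$ as long as $x<2^k-1$ (a \emph{no-carry window}), and sends $(2^k-1,y)\mapsto(0,y+2)$ at a carry. Inside a single window the identity gives $r_{n+jd}=r_{x_0+j}+r_{y_0+j}+\epsilon_j$ with $\epsilon_j=[\,x_0+j\ge 2^{k-1}\,]\,[\,y_0+j\text{ odd}\,]$; since $x_j$ and $y_j$ advance in lockstep, the two Rudin--Shapiro contributions cancel over a long range only when $y_0=x_0$, i.e.\ in the residue class of $0$, where the value reduces to $\epsilon_j$ and the longest block of zeros has length exactly $2^{k-1}+1$ (this recovers $A_{\mathbf{r}}(0,d)=2^{k-1}+1$ from Theorem \ref{thm:main1}). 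Any other alignment turns $r_{x_0+j}+r_{y_0+j}$ into a nontrivial self-correlation of $\mathbf{r}$, whose blocks of constancy are short, so I would conclude that no run contained in a single window exceeds $2^{k-1}+1$.

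The third step is to show that a block of equal values can be stitched together across a carry, but only for a bounded gain. At a carry the low block resets to $0$ and the high block jumps by $2$, so the value just after the carry is $r_0+r_{y+2}+0=r_{y+2}$, which must coincide with the value just before; this matching condition, combined with the window bound, is what I would use to prove that for $k\ge 4$ the length of any run is at most $2^{k-1}+2$. For the matching lower bound I would exhibit an explicit starting index whose base‑$2^k$ digits are chosen so that the junction–parity contribution compensates the block contributions throughout a window and across the adjoining carry, producing a block of exactly $2^{k-1}+2$ equal values, and verify it directly from the identity; the instance $k=2$, $n=31$ (where the run $r_{31}=r_{36}=\cdots=r_{56}=0$ sweeps through two carries) illustrates the carry-crossing mechanism. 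Finally, the three values $k=1,2,3$ sit below the threshold at which a single crossing dominates: the windows are then so short that a run passes through many carries, and I would settle these cases by the same window/carry analysis, which for small $k$ reduces to finitely many configurations, obtaining $5,6,9$.

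The main obstacle is the third step, namely controlling the carries: one must rule out that several carries conspire to keep a run constant for longer than $2^{k-1}+2$, and pin down precisely why $k\ge 4$ is the threshold at which the generic within-window bound takes over — equivalently, why the short-window regime $k\le 3$ genuinely yields the larger values $5,6,9$. Making the correlation estimate of the second step quantitative, that is, bounding the blocks of constancy of $u\mapsto r_u+r_{u+c}$ uniformly in $c\neq 0$, is the other technically delicate point.
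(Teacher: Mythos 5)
Your splitting identity is exactly the paper's starting point --- it is equivalent to \eqref{eq:RS_identity}, since $r_{2m+1}\equiv r_m+[m\text{ odd}]\pmod 2$ --- and your window/carry bookkeeping parallels the paper's case split over the quarters of $[0,2^k)$. But the two points you flag as ``delicate'' are precisely where the proof lives, and your proposed formulations of them fail. The uniform correlation bound you ask for, that the blocks of constancy of $u\mapsto r_u+r_{u+c}$ are short uniformly in $c\neq 0$, is \emph{false}: if $c=2^N a$ then $r_{u+c}\equiv r_u+r_a \pmod 2$ for all $u<2^{N-1}$, so the correlation is constant on ranges whose length grows with $c$. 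Correspondingly, your conclusion that cancellation over a long range forces $y_0=x_0$ (``the residue class of $0$'') is wrong: the most one can conclude is $y_0\equiv x_0\pmod{2^k}$, which the paper extracts from Lemma \ref{lem:uniquely_determined_position} (positions of sufficiently long aligned subwords of $\mathbf{r}$ are unique modulo $2^{t+2}$, itself proved by induction from a finite computer check). The aligned case $y_0=x_0+2^k j$ with $j\neq 0$ still yields runs of the extremal length and must be killed separately --- the paper does this by comparing the expansions $(j)_2 10^{k-1}10^{k-1}$ and $(j)_2 10^{k-2}110^{k-2}1$, which differ by exactly one occurrence of $11$ --- and the corresponding starting points lie in the class $j\bmod d$, not $0$, so the extremal runs are not confined to one residue class.

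The second gap is your treatment of the regime $x\ge 2^{k-1}$, where $\epsilon_j$ alternates with the parity of $y_0+j$. There, constancy of $r_{x_0+j}+r_{y_0+j}+\epsilon_j$ is equivalent to a long common subword of $\mathbf{r}$ and the \emph{different} sequence $\mathbf{s}=(r_{2n+1})_{n\ge 0}$, not to a self-correlation of $\mathbf{r}$; the decisive input is that $\Sub(\mathbf{r})\cap\Sub(\mathbf{s})$ contains no word of length exceeding $14$ (Lemma \ref{lem:common_subwords}), which rests on the reversal relation between the substitutions $\rho$ and $\sigma$ (Proposition \ref{prop:interesting_property}). Nothing in your sketch supplies this ingredient. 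Moreover, with the actual quantitative inputs these cases give only the bound $2^{k-2}+14$, which undercuts $2^{k-1}+2$ only for $k\ge 6$; your claimed threshold $k\ge 4$ is too optimistic, and indeed the paper settles $k=4,5$ (like $k\le 3$) by a finite search in a prefix of $\mathbf{r}$, justified by Lemma \ref{lem:appearance_of_subwords}. Finally, your lower bound remains a plan: the paper's explicit witness is $n=2^{2k+1}-2^k-1=[1^k 0 1^k]_2$, for which $e_{11}(n+(i+1)d)=2e_{11}(i)$ gives the run of length $2^{k-1}+2$ at once, together with the check that it extends in neither direction.
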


\begin{thm} \label{thm:main3}
For all integers $k \geq 1$ we have
$$
A_{\mathbf{r}}(2^k-1) = \begin{cases}
4 &\text{if } k=1, \\
5 &\text{if } k=2, \\
9 &\text{if } k=3, \\
10 &\text{if } k=4, \\
2^{k-1} + 3 &\text{if } k \geq 5 \text{ is odd}, \\
2^{k-1} + 1 &\text{if } k \geq 6 \text{ is even}.
\end{cases}
$$ 
\end{thm}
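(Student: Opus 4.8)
The plan is to translate the statement into a question about runs of a single binary ``flip'' sequence and then to read it in base $2^k$. Throughout put $B = 2^k$ and $d = B-1$, work with $\mathbf{r}$ over $\{-1,1\}$ using the recursions $r_{2n}=r_n$ and $r_{2n+1}=(-1)^n r_n$, and for a fixed difference $d$ set
$$ g_d(m) = e_{11}(m+d)+e_{11}(m) \bmod 2, $$
so that $r_{m+d}=r_m$ exactly when $g_d(m)=0$. Because equality of colour is transitive along the progression, $A_{\mathbf{r}}(n,d)=1+\min\{j\ge 0 : g_d(n+jd)=1\}$, and hence $A_{\mathbf{r}}(d)$ equals $1$ plus the maximal length of a run of consecutive zeros of $g_d$ taken along any progression of difference $d$. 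The task is thus to bound, and then attain, the longest such run.

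Next I would describe the dynamics of adding $d=B-1$ in base $B$. Writing $m=BH+s$ with $0\le s<B$, a carry computation gives $m+d=B(H+1)+(s-1)$ when $s\ge 1$ and $m+d=BH+(B-1)$ when $s=0$; so along the progression the low block $s$ decreases by $1$ and the high part $H$ increases by $1$ at every step with $s\ge 1$, with one exceptional step per period when $s=0$. Splitting the occurrences of $11$ into those inside the low block, those inside $H$, and the single occurrence straddling bits $k-1$ and $k$, I would prove the decomposition
$$ g_d(m) \equiv g_1(s-1)+g_1(H)+\beta(m) \pmod 2 \qquad (s\ge 1), $$
where $g_1$ is the difference-$1$ flip of $\mathbf{r}$ and $\beta(m)$ is the change in the straddling term, determined by the top bit $[\,s\ge 2^{k-1}\,]$ of the low block and the parity of $H$. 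Here the explicit formulas $g_1(2t)=t\bmod 2$ and $g_1(2t+1)=g_1(t)+(t\bmod 2)\bmod 2$ give complete control of $g_1$.

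For the upper bound I would argue that a run of zeros of $g_d$ cannot exceed $2^{k-1}+O(1)$ steps. Along such a run the descending sequence $g_1(s-1)$ must match the ascending sequence $g_1(H)$ up to the correction $\beta$; but the straddling bit $[\,s\ge 2^{k-1}\,]$ is constant only while $s$ remains in one of the halves $[0,2^{k-1})$ or $[2^{k-1},2^k)$, i.e.\ over at most $2^{k-1}$ consecutive steps, and crossing the midpoint $s=2^{k-1}$ both toggles $\beta$ and breaks the self-similar matching of the two flip patterns. Quantifying this with the recursions for $g_1$ confines the run to about half a period, which yields the leading term $2^{k-1}$.

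Finally, for the matching constructions I would prescribe the base-$B$ digits of a starting point $n$ so that, over one half-period, the patterns $g_1(s-1)$ and $g_1(H)$ cancel against $\beta$ for the full claimed length, and then verify optimality against the upper bound. The parity of $k$ enters through the behaviour of $e_{11}(H)$ as $H$ crosses a dyadic boundary during the window, which determines how many extra steps can be salvaged at the ends of the run and so produces $2^{k-1}+3$ for odd $k\ge 5$ and $2^{k-1}+1$ for even $k\ge 6$; the small cases $k\in\{1,2,3,4\}$, which fall outside this pattern, I would settle by direct finite computation. I expect the main obstacle to be exactly this bookkeeping of the additive constant: the leading term follows quickly from the block dynamics, but controlling $\beta$ together with the carries generated when $H$ is incremented, and proving that the constructed $n$ attains equality rather than a value off by a unit, is the delicate point where the $+3$ versus $+1$ dichotomy originates.
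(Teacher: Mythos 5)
Your setup is sound and in fact coincides with the paper's: writing $m = 2^k H + s$, noting that adding $d = 2^k-1$ increments $H$ and decrements $s$, and splitting $e_{11}$ into the high block, the low block, and the straddling pair is exactly the identity the paper uses (there phrased as $r_n \equiv r_m + r_l$ or $r_n \equiv r_{2m+1} + r_l$ according to the top bit of $l$), and your formulas for $g_1$ and for the constancy of $\beta$ within each half of $[0,2^k)$ are correct. The lower-bound constructions you describe are also plausible and parallel the paper's explicit progressions. The genuine gap is in your upper bound. You assert that crossing the midpoint $s = 2^{k-1}$ ``breaks the self-similar matching of the two flip patterns'' and that ``quantifying this with the recursions for $g_1$'' confines the run to half a period — but nothing in the proposal actually proves that the matching condition $g_1(s-1) + g_1(H) + \beta \equiv 0$ cannot persist for a long stretch. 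Inside the upper half this condition says that a long subword of $\mathbf{s} = (r_{2m+1})_{m\ge 0}$ coincides with a long \emph{reversed} subword of $\mathbf{r}$; inside the lower half it says a subword of $\mathbf{r}$ coincides with a reversed subword of $\mathbf{r}$. Ruling out long such coincidences, and pinning down \emph{where} the surviving ones can occur, is the entire difficulty, and it is not a consequence of the $g_1$ recursions alone.

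Concretely, the paper needs three structural facts that your sketch has no replacement for: (a) the substitution identity relating $\mathbf{s}$ to reversed $\rho$-blocks (Proposition \ref{prop:interesting_property}); (b) position rigidity — a sufficiently long subword of $\mathbf{r}$ or $\mathbf{s}$ occurs at a \emph{unique} position modulo $2^{t+2}$ (Lemma \ref{lem:uniquely_determined_position}, proved by induction on aligned blocks with a computer-verified base case), which yields the congruence $n+m+|w| \equiv 0 \pmod{2^{t+2}}$ of Lemma \ref{lem:reversed_subwords}; and (c) the fact that $\mathbf{r}$ and $\mathbf{s}$ share no subword of length $\ge 15$ (Lemma \ref{lem:common_subwords}, again a finite check). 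Fact (c) is what kills runs whose low block sits in the middle quarters, and fact (b) is what forces $m \equiv 0$ resp.\ $m+l \equiv -1 \pmod{2^k}$ in the extremal cases — the rigidity from which the exact constants $2^{k-1}+3$ (odd $k$) versus $2^{k-1}+1$ (even $k$) are extracted via short explicit expansions. Without these, your argument cannot exclude, say, a run of length $2^{k-1}+7$ starting at some unforeseen residue, so the theorem's precise values remain unproved; the midpoint-toggling of $\beta$ by itself only constrains runs that you have already confined to one half, which is circular. If you want to salvage the approach, you would need to reprove (b) and (c) in the $g_1$ language, which amounts to redoing the paper's word-combinatorial machinery.
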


\begin{rem}
In a recent preprint, Aedo et al.\ \cite{AGMNP23} considered asymptotic growth of lengths of monochromatic arithmetic
progressions in a certain family of infinite words, which includes the Rudin--Shapiro word. In particular, they obtained  lower bounds for $A_{\mathbf{r}}(2^k \pm 1)$ using a different method.
\end{rem}

These results do not yet settle whether or not the values $A_{\mathbf{r}}(n,d)$ and $A_{\mathbf{r}}(d)$ are finite for general $n,d$. The following result shows that this is in fact the case for any pattern $v$ of length at least $2$, and provides an upper bound in the spirit of Theorem \ref{thm:Par17}. In its statement $\nu_2(d)$ denotes the $2$-adic valuation of $d$, namely $\nu_2(d) = \sup \{n \in \N: 2^n \mid d\}$.

\begin{thm} \label{thm:general_bound}
For any pattern $v$ of binary digits of length $|v| \geq 2$ and all integers $k \geq 1$ we have 
$$\max_{1 \leq d < 2^k} A_v(d) \leq 2^{k+|v|-\nu_2(d)-1}.$$
\end{thm}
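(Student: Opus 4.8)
The plan is to establish the stated inequality in the pointwise form: for every $n$ and every $d$ with $1 \le d < 2^k$ there is an index $l$ with $1 \le l \le 2^{k+|v|-\nu_2(d)-1}$ at which the parity of $e_v$ first changes, which yields $A_v(n,d) \le 2^{k+|v|-\nu_2(d)-1}$ and hence the bound on $A_v(d)$. Throughout write $m=|v|$ and $s=\nu_2(d)$. The basic tool is the decomposition of $e_v$ into window indicators: for $x$ with binary digits $(\epsilon_i)_{i\ge 0}$, let $\chi_i(x)\in\{0,1\}$ indicate an occurrence of $v$ in the window of positions $i,\dots,i+m-1$, so that $e_v(x)=\sum_{i\ge 0}\chi_i(x)$ and each $\chi_i(x)$ depends only on $\epsilon_i,\dots,\epsilon_{i+m-1}$. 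First I would reduce to an odd difference: factoring $d=2^s d'$ with $d'$ odd and $d'<2^{k-s}$, I write $n+ld=(n\bmod 2^s)+2^s M_l$ with $M_l=\lfloor n/2^s\rfloor+l d'$, so the lowest $s$ digits are frozen. Sorting the window indicators by whether their window lies entirely below position $s$, straddles it, or lies entirely above it gives $e_v(n+ld)\equiv c_0+\beta_l+e_v(M_l)\pmod 2$, where $c_0$ is constant and the straddling term $\beta_l$ depends only on $M_l\bmod 2^{m-1}$, hence is periodic in $l$ with period dividing $2^{m-1}$ (using that $d'$ is odd). Note that the target exponent factors as $2^{k+m-s-1}=2^{m-1}\cdot 2^{k-s}$, the two factors reflecting exactly the period $2^{m-1}$ of $\beta_l$ and the cyclic behaviour of $M_l$ modulo powers of $2$ governed by the odd part $d'$.

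The core is then to bound the runs of $l\mapsto \beta_l+e_v(M_l)\bmod 2$. Set $b=(k-s)+(m-1)$, so that the claimed bound is $2^b$; since $d'$ is invertible modulo $2^b$, the map $l\mapsto M_l\bmod 2^b$ is a bijection of $\Z/2^b\Z$, while $\lfloor M_l/2^b\rfloor$ grows slowly. The decisive structural feature is the comparison of the indices $l$ and $l+2^b$: the numbers $M_l$ and $M_{l+2^b}=M_l+2^b d'$ share the same lowest $b$ digits (so their low windows, and the value of $\beta$, coincide), whereas their high parts differ by exactly $d'$. Hence, assuming the parity were constant on $\{0,1,\dots,2^b\}$, equality of the parities at $0$ and $2^b$ forces $e_v(H)\equiv e_v(H+d')\pmod 2$ up to a controlled straddling correction at position $b$, where $H=\lfloor M_0/2^b\rfloor$; if the run persisted longer it would force the whole progression $H,H+d',H+2d',\dots$ to be quasi-monochromatic as well. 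This self-similar relation, which transports a long run at one scale to a run of the same odd difference $d'$ at a higher scale with fewer significant digits, is what I would turn into a contradiction.

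I expect the main obstacle to lie precisely here: converting this self-similar relation into a bound uniform in $n$, while keeping exact track of the straddling windows at position $b$ and of the periodic term $\beta_l$, so that the forced change of $e_v$ on the high part cannot be cancelled repeatedly. I would attack it by induction on $k-s$, using the digit recurrence $e_v(2x+\eta)=e_v(x)+\chi_0(2x+\eta)$ to relate the progression on the low block to progressions at the next scale and carrying the periodic corrections through; the base case $k-s=1$ (that is, $d'=1$) amounts to bounding the runs of $e_v$ along consecutive integers by $2^{m}$. The careful bookkeeping of the corrections $c_0$, $\beta_l$ and the straddling terms across scales is, I anticipate, the technical heart of the argument, whereas the reduction to odd difference and the counting that produces the exponent $2^{m-1}\cdot 2^{k-s}$ are comparatively routine.
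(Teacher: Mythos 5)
Your reduction to an odd difference and the congruence $e_v(n+ld)\equiv c_0+\beta_l+e_v(M_l)\pmod 2$ are sound for $v$ containing a $1$ (for $v=0^i$ the window decomposition $e_v(x)=\sum_{i\ge0}\chi_i(x)$ fails outright, since every window above the top of the expansion is then an occurrence; the paper must treat $v=0^i$ as a separate special case, and so would you). The genuine gap is in your core step. To prove $A_v(n,d)\le 2^b$ with $b=(k-s)+(m-1)$ you assume the parity is constant for $l=0,\dots,2^b$; but comparing $l$ with $l+2^b$ then yields exactly \emph{one} usable relation, namely $e_v(H)\equiv e_v(H+d')$ up to a straddling correction at position $b$ that you do not control (those windows read the low bits of $H$ versus $H+d'$, which differ). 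A single uncontrolled congruence is not a contradiction. For the self-similar transport to produce an actual run at the higher scale, the original run would need length many multiples of $2^b$ --- far longer than the bound you are trying to prove --- so the scheme can at best deliver a multiplicative bound of the form $2^b\cdot(\text{bound at the next scale})$, not $2^b$. Worse, the transported progression $H,H+d',H+2d',\dots$ has the \emph{same} odd difference $d'$, so your induction on $k-s$ can never drop below $\ell(d')$, and the announced base case $d'=1$ is never reached unless $d'=1$ from the start.

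What is missing is the ingredient the paper isolates as Lemma \ref{lem:prefix}: for any $x,y$ with distinct initial digits there exists $w\in\{0,1\}^{|v|-1}$ with $|wx|_v\not\equiv|wy|_v\pmod 2$, proved by a counting argument that makes $p$ (where $v=ps$ and $s$ is the longest proper suffix of $v$ prefixing $x$ or $y$) the longest element of $\Pref(v)\cap\Suf(w)$. With it, the paper argues directly rather than by contradiction: apply the lemma to $x=0^{\ell(d')}z$ and $y=(d')_2\,z$ (where $z$ encodes the frozen low bits $n\bmod 2^{\nu_2(d)}$); since $d'$ is odd, $M_l$ runs through all residues modulo $2^{|v|-1+\ell(d')}$, so some $j<2^{|v|+\ell(d')-1}$ gives $(n+jd)_2=uwx$, and then $(n+(j+1)d)_2=uwy$ with no carry propagating past $w$, so the parity flips at a single step and $A_v(n,d)\le j+1\le 2^{k+|v|-\nu_2(d)-1}$. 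This pigeonhole-plus-lemma mechanism --- forcing one specific suffix to appear and flipping parity in one step --- is precisely what your run-transport outline lacks, and without it (or an equivalent substitute) the argument does not close.
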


\begin{rem}
Theorem  \ref{thm:general_bound} also holds in the Thue--Morse case $v=1$ for $k$ odd but fails for $k$ even. This follows immediately from Theorem \ref{thm:Par17} and the equality $A_{\mathbf{t}}(2d) = A_{\mathbf{t}}(d)$ for all $d \geq 1$, which was stated in \cite[p.\ 5]{AGNS22}. The same conclusion also holds for the pattern $v=0$ due to the equality $A_{0}(d) = A_{\mathbf{t}}(d)$ (see Proposition \ref{prop:A_0=A_t} below).
\end{rem}

We now give an outline of the contents of this paper. In Section \ref{sec:notation} we give some general notation and terminology used throughout the paper. Section \ref{sec:main1} is devoted to the proof of Theorem \ref{thm:main1}. Parts (i) and (ii) are proved using straightforward computation, while part (iii) relies on detailed case distinction performed with the help of computer software. The goal of Section \ref{sec:main2_main3} is to prove Theorems \ref{thm:main2} and \ref{thm:main3}. To achieve this, we use a nice relation linking $\mathbf{r}$ and its subsequence $(r_{2n+1})_{n \geq 0}$ (Proposition \ref{prop:interesting_property}) and some standard properties of the subwords of $\mathbf{r}$. Section \ref{sec:general_v} contains the proof of Theorem \ref{thm:general_bound} together with other general results on the behavior of $A_v(0,d)$ and $A_v(d)$. In particular, we provide an infinite family of sequences $(d_k)_{k \geq 0}$ such that $A_v(0,d_k)$ is ``almost'' linear in $d_k$. In Section \ref{sec:problems} we discuss likely improvements to the results presented in this paper and state many related problems and conjectures, backed by experimental calculations.  The method used to compute the values $A_v(d)$ is described in Section \ref{sec:calculation}. Finally,  Section \ref{sec:supplemental} contains a description of the supplemental files, which are available in the repository \cite{Git}.

\section{Notation and terminology} \label{sec:notation}

We introduce some further notation and terminology related to operations on words, mainly following \cite[Chapter 1]{AS03}. Let $\Sigma$ be a nonempty finite set (an alphabet). Then $\Sigma^*$ denotes the set of all finite words (blocks, strings) consisting of the letters from $\Sigma$, including the empty word $\epsilon$. We also write $\Sigma^{+} = \Sigma^* \setminus \{\epsilon\}$. Two words $v,w \in \Sigma^*$ can be concatenated into a single word $vw$. For $n \in \N$ the notation $w^n$ means the concatenation of $n$ copies of $w$, where $w^0 = \epsilon$. The length $|w|$ of a word is the number of its letters, and for each $l \in \N$ we let $\Sigma^l$ denote the set of words of length $l$. The reversal of a word $w = w_1 w_2 \cdots w_l$ is given by $w^R = w_l w_{l-1} \cdots w_1$. We say that $v$ is a subword of $w$ if there exist words $x,y$ such that $w = xvy$ and we let $\Sub(w)$ denote the set of subwords of $w$. If $w=vy$, then $v$ is said to be a prefix of $w$. If additionally $y \neq \epsilon$, then the prefix $v$ is called proper. Similarly, if $w = xv$, then $v$ is said to be a suffix of $w$ (proper when $x \neq \epsilon$). The sets of prefixes and suffixes of $w$ are denoted by $\Pref(w)$ and $\Suf(w)$, respectively. The number of (possibly overlapping) occurrences of a nonempty word $v$ as a subword in $w$ is denoted by $|w|_v$. The notions of subwords and prefixes naturally extend to the case where $w=w_0w_1\cdots$ is a right-infinite word.

We now focus on the binary case $\Sigma = \{0,1\}$.
For any $w = w_k w_{k-1} \cdots w_1 w_0 \in \{0,1\}^+$ the notation $[w]_2$ refers to the integer represented by $w$ in base $2$, namely
$$ [w]_2 = 2^k  w_k + 2^{k-1} +  \cdots + 2 w_1 + w_0. $$
We also put $[\epsilon]_2 = 0$. Note that adding leading zeros to $w$ does not affect $[w]_2$. Conversely, for $n \in \N$ we let $(n)_2$ denote the canonical binary representation of $n$, that is, the unique word $w \in \{0,1\}^*$ without leading zeros such that $[w]_2 = n$. The length of the binary expansion of $n$ is denoted by $\ell(n)$, and can be equivalently written as
$$ \ell(n) = |(n)_2| = 1 + \lfloor \log_2 n \rfloor. $$
In accordance with our earlier convention, when $v$ is a word starting with exactly $i \geq 0$ zeros and containing at least one $1$, we have
$$  e_{v}(n) = |0^i (n)_2|_v.  $$
Finally, we let $\overline{w}$ denote the binary negation of a word $w \in \{0,1\}^*$, obtained by swapping $1$'s with $0$'s and vice versa.

\section{Proof of Theorem \ref{thm:main1}} \label{sec:main1}

\subsection{Parts (i) and (ii)}

To begin, we deal with parts (i) and (ii) in the statement of Theorem \ref{thm:main1}. In the case (i) we prove a more precise result, which will be useful later when calculating $A_{\mathbf{r}}(d)$.

\begin{prop} \label{prop:special_case_1}
If $d = 2^{k}-1$ for some integer $k \geq 3$, then
$$ r_d =  r_{2d} = \cdots = r_{2^{k-1} d},$$
and this common value is equal to  $(k-1) \bmod{2}$.
Moreover, if $k$ is odd, then $r_{(2^{k-1}+1) d} = k \bmod{2}$, and consequently $A_{\mathbf{r}}(0,d) = (d +3)/2$.
\end{prop}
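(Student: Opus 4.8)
The plan is to analyze the binary expansions of the multiples $jd = j(2^k-1)$ directly. The key observation is the identity
$$ j(2^k-1) = (j-1)2^k + \bigl((2^k-1)-(j-1)\bigr), $$
valid for $1 \le j \le 2^k-1$. Writing $m = j-1$ and noting that the $k$-digit representation of the number $(2^k-1)-m$ is precisely the binary complement $\overline{m}$ of the $k$-digit representation of $m$, the upper and lower $k$ bits do not interfere, so the length-$2k$ word $w$ obtained by concatenating the $k$-digit representation of $m$ with $\overline{m}$ satisfies $[w]_2 = jd$. Since leading zeros never create occurrences of $11$, we have $e_{11}(jd) = |w|_{11}$, and everything reduces to counting occurrences of $11$ in $w = m\,\overline{m}$.

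First I would split this count into the occurrences lying inside $m$, the occurrences inside $\overline{m}$, and the single straddling pair formed by the last digit of $m$ and the first digit of $\overline{m}$. Writing $m = m_{k-1}\cdots m_0$, the straddling pair equals $11$ exactly when $m_0 = 1$ and $m_{k-1}=0$. Using the complementation identity $|\overline{m}|_{11} = |m|_{00}$, the two interior contributions combine to $|m|_{11}+|m|_{00}$, which is the number of equal adjacent pairs among the $k-1$ consecutive pairs of digits of $m$; equivalently it equals $(k-1)-c(m)$, where $c(m)$ denotes the number of positions at which consecutive digits of $m$ differ.

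For $1 \le j \le 2^{k-1}$ we have $0 \le m \le 2^{k-1}-1$, so $m_{k-1}=0$ and the straddling pair contributes $1$ precisely when $m_0 = 1$. The parity fact I would then invoke is that, starting from the leading digit $m_{k-1}=0$ and flipping the value at each of the $c(m)$ changes, the final digit satisfies $m_0 \equiv c(m) \pmod 2$; hence the boundary contribution and the term $-c(m)$ cancel modulo $2$, giving $e_{11}(jd) \equiv k-1 \pmod 2$ uniformly in such $j$. This establishes the chain of equalities and identifies the common value as $(k-1)\bmod 2$.

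For the \emph{moreover} part I take $j = 2^{k-1}+1$, that is $m = 2^{k-1}$ with $k$-digit representation $10^{k-1}$; then $w = 10^{k-1}\,01^{k-1} = 10^{k}1^{k-1}$, whose only occurrences of $11$ lie inside the final block $1^{k-1}$, so $e_{11}((2^{k-1}+1)d) = k-2 \equiv k \pmod 2$. When $k$ is odd we have $(k-1)\bmod 2 = 0 = r_0$, so $0,d,2d,\ldots,2^{k-1}d$ is monochromatic while $r_{(2^{k-1}+1)d}=1$ breaks it, whence $A_{\mathbf{r}}(0,d) = 2^{k-1}+1 = (d+3)/2$. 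I expect the main obstacle to be the counting step together with the parity identity $m_0 \equiv c(m)\pmod 2$: once the concatenation structure $w = m\,\overline{m}$ is in hand, the remaining difficulty is handling the boundary pair correctly and checking the degenerate case $m=0$.
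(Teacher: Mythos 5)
Your proof is correct and follows essentially the same route as the paper: both rest on the decomposition $jd = (j-1)2^k + \bigl((2^k-1)-(j-1)\bigr)$, identify the binary expansion of $jd$ with the concatenation $m\,\overline{m}$, and finish by a parity count of equal adjacent digit pairs, with the same explicit computation $e_{11}\bigl((2^{k-1}+1)d\bigr) = k-2$ for the \emph{moreover} part. The only (cosmetic) difference is that the paper first reduces to odd multipliers via $e_{11}(2m)=e_{11}(m)$, which forces the straddling pair to be $01$, whereas you treat all $j \le 2^{k-1}$ at once and absorb the boundary pair with the identity $m_0 \equiv c(m) \pmod 2$ --- the same combinatorial fact the paper uses in the form $|0w0|_{01} = |0w0|_{10}$.
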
 
\begin{proof}
Since $e_{11}(2m) = e_{11}(m)$ for all $m \in \N$, it is sufficient to show that $e_{11}(nd) \equiv k-1 \pmod{2}$ for all odd $n < 2^{k-1}$. 
Fix any such $n$ and let $w \in \{0,1\}^{k-2}$ be such that $[w0]_2 = n-1$ ($w$ may have leading zeros). Then
$$nd = 2^{k}(n-1) + (2^{k}-1)-(n-1) = [0w01\overline{w}1]_2.$$
Therefore,
$$e_{11}(nd) = |0w0|_{11} + |1\overline{w}1|_{11} = |0w0|_{11} + |0w0|_{00}.$$
Now, for any $u \in \{0,1\}^+$ we have
$$|u|_{11} + |u|_{00} + |u|_{01} + |u|_{10} = |u|-1,$$
since both sides count the number of blocks of adjacent digits of length $2$ in $u$.
At the same time, observe that
$$ |0w0|_{01} = |0w0|_{10}.$$
Combining the above equalities, we obtain
$$ e_{11}(nd) = |0w0| - 1 - |0w0|_{01} - |0w0|_{10} = k - 1 - 2|0w0|_{01} \equiv k-1 \pmod{2},$$
as desired.

If $k$ is odd, then the values $r_{nd}$ for $n = 0,1,\ldots,2^{k-1}$ are all equal to $0$. However, for $n = (d + 3)/2 = 2^{k-1} + 1$ we have $nd = 2^{2k-1} + 2^{k-1} - 1 = [10^k 1^{k-1}]_2$, so $e_{11}(nd) = k-2$ and the second part of the assertion follows.
\end{proof}

Direct calculation shows that we have $A_{\mathbf{r}}(0,d) = (d+3)/2$ also for $d=39$.

We move on to part (ii) of Theorem \ref{thm:main1}, which is quite simple to prove.

\begin{prop} \label{prop:special_case_2}
If $d = 2^{k}+1$ for some integer $k \geq 2$, then $A_{\mathbf{r}}(0,d) = (d + 1)/2.$
\end{prop}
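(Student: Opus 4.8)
The plan is to prove Proposition~\ref{prop:special_case_2} by computing $e_{11}(nd) \bmod 2$ explicitly for the relevant values of $n$, in direct analogy with the proof of Proposition~\ref{prop:special_case_1}, since $d = 2^k+1$ has the clean binary expansion $(d)_2 = 10^{k-1}1$. First I would show that $r_{nd}$ stays constant (monochromatic) for a range of $n$ up to $(d-1)/2 = 2^{k-1}$ or so, and then exhibit a single value of $n$ where the parity of $e_{11}$ flips, giving the exact threshold $(d+1)/2$. Because $A_{\mathbf{r}}(0,d)$ is the infimum of $l \geq 1$ with $e_{11}(ld) \not\equiv e_{11}(0) = 0$, the claim amounts to saying $e_{11}(ld)$ is even for all $1 \leq l < (d+1)/2$ and odd at $l = (d+1)/2$.

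The key computational step is to understand the binary expansion of $n(2^k+1) = 2^k n + n$, which is the superposition (with a shift by $k$) of two copies of $(n)_2$. For odd $n$ in the range $1 \le n \le 2^{k-1}$, the length of $(n)_2$ is at most $k-1$ (strictly less than $k$), so the two shifted copies of $(n)_2$ occupy disjoint digit positions and there is no carry in the addition. I would write $nd = [w 0^{s} w]_2$ or a similar concatenation, where $w = (n)_2$ and the gap of zeros separates the two copies; then $e_{11}(nd)$ splits cleanly as $2|w|_{11}$ plus possible contributions at the junctions, yielding an even value. This forces $r_{nd} = 0$ for all such $n$, establishing the lower portion of the progression. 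As with the previous proposition, I expect to use the identity relating $|u|_{11} + |u|_{00} + |u|_{01} + |u|_{10} = |u|-1$ together with a symmetry argument to pin down parities without tracking every block.

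To locate the flip at $n = (d+1)/2 = 2^{k-1}+1$ (note $(d+1)/2$ is an integer since $d$ is odd), I would simply compute $nd = (2^{k-1}+1)(2^k+1) = 2^{2k-1} + 2^{k-1} + 2^k + 1$ and read off its binary expansion explicitly, isolating the number of \texttt{11} blocks and checking that it is odd. This is the analogue of the final explicit computation in Proposition~\ref{prop:special_case_1}. The main obstacle I anticipate is bookkeeping the boundary cases: one must verify that no carries occur when $n$ is odd and that the even- parity conclusion is not spoiled by \texttt{11} blocks forming at the boundary between the shifted copy and the low-order copy of $(n)_2$, and one must separately handle even $n$ (which reduce to smaller cases via $e_{11}(2m) = e_{11}(m)$, as noted in the proof of Proposition~\ref{prop:special_case_1}). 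Once these carry-free and boundary considerations are dispatched, the parity computation and the identification of the threshold are routine.
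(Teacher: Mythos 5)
Your proposal is correct and follows essentially the same route as the paper: reduce to odd $n$ via $e_{11}(2m)=e_{11}(m)$, observe that for odd $n\leq 2^{k-1}$ the expansion is the carry-free concatenation $(nd)_2=(n)_2\,0^{m}\,(n)_2$ with $m=k-\ell(n)>0$, so $e_{11}(nd)=2e_{11}(n)$ is even, and then verify the flip at $n=2^{k-1}+1$ by reading off $(nd)_2=10^{k-2}110^{k-2}1$ with exactly one occurrence of $11$. The only difference is cosmetic: the counting identity $|u|_{11}+|u|_{00}+|u|_{01}+|u|_{10}=|u|-1$ you mention is not needed here, since the gap of zeros already makes the parity computation immediate.
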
 
\begin{proof}
First, we prove that $r_{nd} = 0$ for all $n < (d + 1)/2 = 2^{k-1}+1$. Again, it is sufficient to consider $n$ odd. We have $(nd)_2 = (n)_2 0^m (n)_2$, where $m = k - \ell(n) > 0$. This means that
$$  e_{11}(nd) = 2e_{11}(n) \equiv 0 \pmod{2}. $$
Now, for $n = 2^{k-1}+1$ we have $(nd)_2 = 1 0^{k-2} 1 1 0^{k-2} 1$, which gives $e_{11} (nd) = 1$. The result follows.
\end{proof}

\subsection{Part (iii) --- general approach} 

We move on to part (iii) of Theorem \ref{thm:main1}, which requires much more effort to prove. The relation $r_{2m} = r_{m}$ implies $A_{\mathbf{r}}(0,2d) = A_{\mathbf{r}}(0,d)$, and thus allows us to restrict our attention to odd differences $d$. We can also assume that there is a $0$ in the binary expansion of $d$. Indeed, if $(d)_2 = 1^k$ for some $k \geq 2$, then for $k$ even we get $A_{\mathbf{r}}(0,d)=1 < d/2$, while $k$ odd falls under part (i) of the theorem. 

We now give an outline of our method. The goal is to show that for each $d$ covered by part (iii) there exists a positive integer $n < d/2$ such that $r_{nd} = 1$. To achieve this we adapt the approach used in \cite{MSS11} to prove Theorem \ref{thm:MSS11}. More precisely, instead of trying to find a single specific value $n$, we look for two positive integers $m,n$ such that $m+n < d/2$ and at least one of $r_{nd}, r_{md}, r_{(m+n)d}$ is equal to $1$. The key here is to relate binary expansions of $nd, md$ and $(m+n)d$. In order to do this we need some prior knowledge about the binary expansion of $d$, hence the analysis is split into multiple cases, depending on a prefix and suffix of the binary expansion of $d$. More precisely let $p,s \in \{0,1\}^+$ be such that $p$ starts with a $1$ and $s$ ends with a $1$. We define the set $D(p,s)$ containing $d$ such that $(d)_2$ starts with $p$ and ends with $s$, namely
$$  D(p,s) = \{d \in \N: \ p \in \Pref((d)_2), s \in \Suf((d)_2)\}.  $$
Before giving a more detailed explanation, we give an example illustrating the basic concept. Here and in the sequel we will write $\ell = \ell(d)$.
\begin{ex} \label{ex:basic_concept}
We will show that $A_{\mathbf{r}}(0,d) < d/2$ for all $d \in D(11011,11101)$. We claim that at least one of $r_{d}, r_{5d}, r_{(2^{\ell-4} \cdot 5+1)d}$ is equal to $1$. The suffix of $(5d)_2$ of length $5$ is $10001$ and we can write $(5d)_2 = w10001$ for some $w \in \{0,1\}^*$. Similarly, we can write $(d)_2=11011u$ for some $u \in \{0,1\}^*$. We now perform binary addition $(d)_2+ (2^{\ell-4} \cdot 5d)_2= (2^{\ell-4} \cdot 5+1)d$, obtaining:
$$\begin{array}{lllllllllll}
       &   &   & 1 & 1 & 0 & 1 & 1 & u \\
+      & w & 1 & 0 & 0 & 0 & 1 &   &   \\
\hline & w & 1 & 1 & 1 & 1 & 0 & 1 & u
\end{array}.$$
If we count the number of occurrences of $11$ in each line and add them together, we get
$$ (2 + |1u|_{11}) + |w1|_{11} + (|w1|_{11} + 3 + |1u|_{11}) = 5 + 2(|1u|_{11} + |w1|_{11}) \equiv 1  \pmod{2}. $$
Hence, at least one line contains an odd number of occurrences of $11$ and our claim holds. Since $(2^{\ell-4} \cdot 5+1)d$ is the largest of the considered indices, we get
$$ A_{\mathbf{r}}(0,d) < 2^{\ell-4} \cdot 5+1.$$
At the same time, we have $\ell(2^{\ell-4} \cdot 5+1) = \ell-1$ and the prefix $10$ of $(2^{\ell-4} \cdot 5+1)_2$ is lexicographically smaller than the prefix $11$ of $(d)_2$. These two conditions together give $2^{\ell-4} \cdot 5+1 < d/2$, which is what we needed to show. 
We point out that the whole argument also holds for the elements of $D(p,s)$ in whose binomial expansion the prefix $p$ and suffix $s$ overlap: $[11011101]_2$ and $[110111101]_2$. 
\end{ex}

Ideally, we would like to partition the set of odd positive integers (up to some exceptional cases) into a family of sets $D(p,s)$, each of which can be treated as in Example \ref{ex:basic_concept}. To make this more precise, let $i,j$ be positive integers, where we assume that $j$ is odd and $i<|s|$. We will say that the pair $(i,j)$ is \emph{admissible} for $(p,s)$ if the following two conditions are satisfied for all $d \in D(p,s)$:
\begin{align}
  2^{\ell-i} j+1 &< \frac{1}{2} d,  \tag{A1} \label{eq:A1} \\
  e_{11}(d) + e_{11}(jd) + e_{11}((2^{\ell-i} j+1)d) &\equiv 1 \pmod{2}. \tag{A2} \label{eq:A2}
\end{align}
Example \ref{ex:basic_concept} shows that $(i,j) = (4,5)$ is an admissible pair for $(p,s)=(11011,11101)$. Generalizing Example, we can state the following key observation.

\begin{prop} \label{prop:admissible}
Let $p,s \in \{0,1\}^*$ be such that $p$ begins with a $1$ and $s$ ends with a $1$. If there exists an admissible pair for $(p,s)$, then for all $d \in D(p,s)$ we have $A_{\mathbf{r}}(0,d)<d/2$. 
\end{prop}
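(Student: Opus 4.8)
The plan is to unpack the definition of $A_{\mathbf{r}}(0,d)$ and reduce the claim to producing a single multiplier $l < d/2$ at which $r_{ld} = 1$. Since $r_0 = 0$, the definition gives $A_{\mathbf{r}}(0,d) = \inf\{l \geq 1 : r_{ld} = 1\}$, so it suffices to exhibit a positive integer $l < d/2$ with $r_{ld} = 1$, and then the infimum is automatically bounded by $l$.

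The admissible pair $(i,j)$ supplies exactly the three candidate multipliers $1$, $j$, and $2^{\ell-i}j+1$, where $\ell = \ell(d)$. First I would recall that $r_m \equiv e_{11}(m) \pmod{2}$, so condition \eqref{eq:A2} reads $r_d + r_{jd} + r_{(2^{\ell-i}j+1)d} \equiv 1 \pmod{2}$. A sum of three bits being odd forces at least one of them to equal $1$; hence there is a multiplier $l_0 \in \{1, j, 2^{\ell-i}j+1\}$ with $r_{l_0 d} = 1$, and therefore $A_{\mathbf{r}}(0,d) \leq l_0$.

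To finish I would bound $l_0$ by the largest of the three multipliers. Because $(i,j)$ is admissible we have $i < |s|$, and since $s \in \Suf((d)_2)$ we have $|s| \leq \ell$; thus $\ell - i \geq 1$, so $2^{\ell-i} \geq 2$ and consequently $2^{\ell-i}j+1 > j \geq 1$. The largest multiplier is therefore $2^{\ell-i}j+1$, which gives $A_{\mathbf{r}}(0,d) \leq l_0 \leq 2^{\ell-i}j+1$. Condition \eqref{eq:A1} then yields $A_{\mathbf{r}}(0,d) \leq 2^{\ell-i}j+1 < d/2$, as required.

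The proposition itself presents no serious obstacle: it is the clean logical engine that converts a parity identity \eqref{eq:A2} and a size estimate \eqref{eq:A1} into the desired inequality, exactly as illustrated in Example \ref{ex:basic_concept}. The genuine difficulty lies elsewhere, namely in \emph{verifying} that a given pair $(p,s)$ admits an admissible pair, i.e.\ in checking \eqref{eq:A1} and \eqref{eq:A2} for every $d \in D(p,s)$ simultaneously. This requires controlling the binary expansions of $d$, $jd$, and $(2^{\ell-i}j+1)d$ through a carry analysis of the underlying binary addition, and must be handled via a case distinction over prefixes and suffixes (ultimately with computer assistance); that verification, rather than the present proposition, is the substance of the remainder of the proof of part (iii).
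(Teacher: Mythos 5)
Your proof is correct and matches the paper's approach exactly: the paper states this proposition as an immediate generalization of Example \ref{ex:basic_concept}, whose reasoning is precisely your parity-pigeonhole step (condition \eqref{eq:A2} forces one of $r_d, r_{jd}, r_{(2^{\ell-i}j+1)d}$ to equal $1$) combined with the size bound \eqref{eq:A1} applied to the largest multiplier $2^{\ell-i}j+1$. Your closing remark is also accurate — the paper offers no further proof for this proposition, and the real work indeed lies in verifying admissibility via Lemma \ref{lem:main_idea} and the computer-assisted case analysis.
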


We now discuss how to verify admissibility. Condition \eqref{eq:A1} is rather straightforward to check by comparing binary expansions of $d$ and $(2^{\ell-i} j+1)$.
Analyzing Example \ref{ex:basic_concept} more closely, we can observe that the following conditions were sufficient for \eqref{eq:A2}:
\begin{itemize}    
    \item the total number of occurrences of $11$ between $w$ and $u$ is odd;
    \item the digits adjacent to $w$ and $u$ do not change.
\end{itemize}

For further reference, we state a these observations formally as a lemma.

\begin{lem} \label{lem:main_idea}
Let $p,s \in \{0,1\}^*$ be such that $p$ begins with a $1$ and $s$ ends with a $1$, and $|s| \geq |p|$. Let $i,j$ be positive integers, where $j$ is odd and $i < |p|$. Furthermore, let $\sigma$ denote the suffix of $(j[s]_2)_2$ of length $|s|$, and let $\gamma$ denote the result of the binary addition $p + \sigma 0^{|p|-i}$, where the initial zeros of $\sigma$ (if any) are taken into account. Assume that the following conditions hold:
\begin{enumerate}[label={\textup{(\alph*)}}]
\item either $\ell(j) < i-1$, \\
or $\ell(j) = i-1$ and $2^{-\ell(j)} j < 2^{-|p|} [p]_2$;
\item $|p|_{11}+ |\sigma|_{11}+|\gamma|_{11}$ is odd;
\item there is no carry added to the leftmost digit of $\sigma$ in the addition $p + \sigma 0^{|p|-i}$.
\end{enumerate}
Then $(i,j)$ is admissible for $(p,s)$.
\end{lem}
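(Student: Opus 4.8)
The plan is to verify the two defining conditions of admissibility, \eqref{eq:A1} and \eqref{eq:A2}, separately: I would derive \eqref{eq:A1} from hypothesis (a) and \eqref{eq:A2} from hypotheses (b) and (c). The first is a routine magnitude comparison. Writing $d \geq [p]_2\,2^{\ell-|p|}$ (since $(d)_2$ begins with $p$) and noting that $2^{\ell-i}j+1$ has length $\ell - i + \ell(j)$, in the case $\ell(j) < i-1$ this length is at most $\ell-2$, so $2^{\ell-i}j+1 < 2^{\ell-2} \leq d/2$, the strictness coming from $d$ being odd. In the boundary case $\ell(j) = i-1$ I would instead compare leading bits: here $2\cdot 2^{\ell-i}j = 2^{\ell}\,(j\,2^{-\ell(j)})$ while $d \geq 2^{\ell}\,([p]_2\,2^{-|p|})$, and since $\ell(j) = i-1 < |p|$ these dyadic rationals have denominator dividing $2^{|p|}$, so the strict inequality $j\,2^{-\ell(j)} < [p]_2\,2^{-|p|}$ in (a) forces a gap of at least $2^{\ell-|p|}$, which together with the parity of $d$ gives $2^{\ell-i}j+1 < d/2$.

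The heart of the argument is \eqref{eq:A2}, where I would follow the mechanism of Example~\ref{ex:basic_concept}. Write $(d)_2 = pu$ with $|u| = \ell - |p|$, and $(jd)_2 = w\sigma$, where $\sigma$ is the length-$|s|$ suffix. The identification of $\sigma$ with the suffix of $(j[s]_2)_2$ is immediate: since $s$ is the suffix of $(d)_2$ we have $d \equiv [s]_2 \pmod{2^{|s|}}$, hence $jd \equiv j[s]_2 \pmod{2^{|s|}}$, so the two numbers share their lowest $|s|$ bits. Now $(2^{\ell-i}j+1)d = 2^{\ell-i}(jd) + d$, i.e.\ the binary addition of $w\sigma 0^{\ell-i}$ and $pu$. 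The key structural claim, to be verified by position bookkeeping, is that under the standing assumptions this sum equals $w\gamma u$ with $\gamma = p + \sigma 0^{|p|-i}$. Indeed, because $i < |p|$ the block $u$ lies below the nonzero part of $2^{\ell-i}(jd)$ and passes through unchanged; the overlap of $p$ with $\sigma$ produces exactly $\gamma$; and hypothesis (c) guarantees that no carry escapes into $w$, so $w$ also passes through unchanged.

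With this expansion in hand I would compute the three summands via the concatenation identity $|xy|_{11} = |x|_{11} + |y|_{11} + \varepsilon(x,y)$, where the straddle correction $\varepsilon(x,y) \in \{0,1\}$ equals $1$ precisely when the last letter of $x$ and the first letter of $y$ are both $1$. Since $v=11$ has no leading zero, $e_{11}(m) = |(m)_2|_{11}$ throughout, so this yields
\begin{align*}
e_{11}(d) &= |p|_{11} + |u|_{11} + \varepsilon(p,u),\\
e_{11}(jd) &= |w|_{11} + |\sigma|_{11} + \varepsilon(w,\sigma),\\
e_{11}\big((2^{\ell-i}j+1)d\big) &= |w|_{11} + |\gamma|_{11} + |u|_{11} + \varepsilon(w,\gamma) + \varepsilon(\gamma,u).
\end{align*}
Summing modulo $2$, the terms $|w|_{11}$ and $|u|_{11}$ each appear twice and vanish. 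Since the lowest $|p|-i \geq 1$ bits of $\sigma 0^{|p|-i}$ are $0$, the last letter of $\gamma$ equals that of $p$, so $\varepsilon(p,u)$ and $\varepsilon(\gamma,u)$ cancel; and by (c) the first letter of $\gamma$ equals that of $\sigma$, so $\varepsilon(w,\sigma)$ and $\varepsilon(w,\gamma)$ cancel. What remains is $|p|_{11} + |\sigma|_{11} + |\gamma|_{11}$, which is odd by hypothesis (b), establishing \eqref{eq:A2}.

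I expect the main obstacle to lie not in the parity count, which is essentially forced once the clean expansion $w\gamma u$ is available, but in rigorously justifying that expansion: one must track the exact positions occupied by $w$, $\sigma$, $p$, and $u$ to confirm that the three blocks tile the sum without interference, that the carry analysis behind (c) is correctly localized at the leftmost letter of $\sigma$, and that the assumptions $|s| \geq |p|$ and $i < |p|$ are exactly what prevent the suffix region from colliding with the prefix region or with $u$. A secondary technical point is the handling of degenerate cases --- small $\ell$ and elements of $D(p,s)$ in which $p$ and $s$ overlap --- where the boundary strictness in \eqref{eq:A1} and the requirement $\ell(jd) \geq |s|$ must be checked directly, as was done for the overlap cases at the end of Example~\ref{ex:basic_concept}.
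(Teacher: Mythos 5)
Your proposal is correct and takes essentially the same route as the paper's own proof: condition (a) gives \eqref{eq:A1} by a length/leading-bits comparison, and \eqref{eq:A2} follows from the tableau identity $(2^{\ell-i}j+1)d = [w\gamma u]_2$ together with boundary-letter bookkeeping, where your straddle corrections $\varepsilon(\cdot,\cdot)$ play exactly the role of the letters $a,b$ that the paper peels off the ends of $\sigma$ and $p$ before counting occurrences of $11$ line by line. The only cosmetic differences are your gap argument in the case $\ell(j)=i-1$ (the paper compares length-$|p|$ prefixes lexicographically) and your explicit congruence $jd \equiv j[s]_2 \pmod{2^{|s|}}$ justifying that $\sigma$ is well defined, which the paper leaves implicit; your closing caveat about degenerate overlaps such as $d=[s]_2$ is apt, since the paper's algorithm handles that element separately in its Step 1 rather than through the lemma.
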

\begin{proof}
We first argue that (a) implies \ref{eq:A1}. If $\ell(j) = i-1$, then $\ell(2^{\ell-i}j+1) < \ell-1$ so \eqref{eq:A1} holds. Otherwise, if $\ell(j) = i-1$, then the expansions $(2(2^{\ell-i}j+1))_2$ and $(d)_2$ have equal length, and $2^{-\ell(j)} j < 2^{-|p|} [p]_2$ says that the former expansion has a lexicographically smaller prefix of length $|p|$, which again gives \eqref{eq:A1}.

We now show that (b) and (c) together imply \eqref{eq:A2}.
For $d \in D(p,s)$ write $(d)_2 = pu$ and $(jd)_2 = w \sigma$ for some $u,w \in \{0,1\}^*$. 
Let $p',\sigma' \in \{0,1\}^{|p|+|\sigma|-i-2}$ and $a,b \in \{0,1\}$ be such that $0^{|\sigma|-i-1}p=p'b$ and $\sigma 0^{|p|-i-1}=a\sigma'$. If we let $\gamma'$ denote the result of binary addition $p'+\sigma'$, then $\gamma=a\gamma'b$ by the assumption (c). Therefore, binary addition $ pu + w\sigma 0^{|p|+|u|-i} = (d)_2+ (2^{\ell-i} jd)_2$ can be written as
$$
\begin{array}{llllll}
		&   &   & p'  	      &  b & u  \\
+    	& w & a & \sigma'     &    &  	\\   	
\hline  & w & a & \gamma'  	  &  b & u
\end{array}.
$$
Counting the occurrences of the pattern $11$ in each line, we obtain
\begin{align*}
e_{11}(kd) &= |p'b|_{11}+|cu|_{11} = |p|_{11} + |bu|_{11}, \\
e_{11}(jd) &= |wa|_{11}+|a \sigma'|_{11} = |wa|_{11} + |\sigma|_{11},\\
e_{11}((2^{\ell-i} j+1)d) &= |wa|_{11} + |a\gamma'b|_{11} + |bu|_{11} = |wa|_{11} + |\gamma|_{11} + |bu|_{11}.
\end{align*}
Subtracting the first and second equalities from the third one, we get
$$ e_{11}((2^{\ell-i}j+1)d) - e_{11}(kd) - e_{11}(jd) = |\gamma|_{11} - |\pi|_{11} - |\sigma|_{11},   $$
and the result follows from assumption (b).
\end{proof}

\begin{rem}
    We point out that in order for (a) to be satisfied it is necessary that $|s| \geq |p| > i \geq 2$.
\end{rem}

\subsection{Finding admissible pairs --- the algorithm}

Note that for given $(p,s)$ there exist only finitely many pairs $(i,j)$ satisfying  assumption (a) of Lemma \ref{lem:main_idea}. Hence, we can successively apply the lemma to each such pair $(i,j)$ in hope of finding an admissible one. If this fails, the reasoning can be split into multiple cases by specifying more digits in the prefix and suffix. However, doing this by hand is extremely tedious and quickly becomes intractable.

Instead, we use computer software to carry out this procedure. We now describe a simple iterative algorithm whose goal is to prove that $A_{\mathbf{r}}(0,d) < d/2$ for possibly many $d \in D(p,s)$ and identify exceptional cases.

The algorithm takes as input two blocks $p_1,s_1 \in  \{0,1\}^+$ satisfying the conditions:
\begin{itemize}
\item $p_1$ begins with a $1$;
\item $s_1$ ends with a $1$;
\item $|s_1| \geq |p_1|$.
\end{itemize}
We also define the following notation:
\begin{itemize}
\item $m \geq 1$ -- the iteration number;
\item $K_m$ -- the set of consisting of those $(p,s)$ for which we seek admissible pairs in the $m$th iteration, where $K_1 = \{(p_1,s_1)\}$;
\item $S_m$ -- the set of $(p,s) \in K_m$ for which an admissible pair has been found in the $m$th iteration;
\item $E_m$ -- a (finite) set of exceptional values $d$ for which $A_{\mathbf{r}}(d) \geq d/2$, found in the $m$th iteration.
\end{itemize}
In the $m$th iteration the algorithm performs for each $(p,s) \in K_m$ the following steps.
\begin{enumerate}
\item[\textbf{Step 1.}] If $p \in \Pref(s)$ and $A_{\mathbf{r}}(0,[s]_2) \geq [s]_2/2$, then add $d = [s]_2$ to $E_m$.
\item[\textbf{Step 2.}] For all pairs positive integers $(i,j)$ such that $j$ is odd, $2 \leq i<|p|$, and satisfying assumption (a) of Lemma \ref{lem:main_idea}, check whether conditions (b), (c) of the lemma are satisfied. If yes for some $(i,j)$, add $(p,s)$ to $S_m$ and skip to Step $1$ for the next pair $(p,s) \in K_m$
\item[\textbf{Step 3.}] Since no admissible pair $(i,j)$ has been found, add to $K_{m+1}$ the pairs $(p0,0s)$, $(p0,1s)$, $(p1,0s)$, $(p1,1s)$ and move to Step 1 for the next pair $(p,s) \in K_m$.
\end{enumerate}

The goal is to obtain $K_{m+1} = \varnothing$ at the end of the $m$th iteration, in which case the algorithm terminates and returns the sets 
$$ S^{(m)} = \bigcup_{j=1}^m S_j, \qquad E^{(m)} = \bigcup_{j=1}^m E_j.$$
Since the algorithm might run indefinitely, we may want to force it to terminate after at most $m$ iterations. In this case the algorithm also outputs $K_m \setminus S_m$ -- the set of pairs $(p,s)$ for which no admissible $(i,j)$ has been found in the $m$th iteration.

The reason behind considering the sets $E_j$ is that splitting the case $d \in D(p,s)$ into ``branches'' $(p0,0s)$, $(p0,1s)$, $(p1,0s)$, $(p1,1s)$ omits the element $d = [s]_2 \in D(p,s)$ if $p \in \Pref(s)$. Hence, Step 1 is a precaution against this possibility. 

As a consequence of Lemma \ref{lem:main_idea} and the above discussion, we get the following result.

\begin{prop} \label{prop:algorithm_justification}
Let $(p_1,s_1)$ be the input of the algorithm and let $m \geq 1$. Then for all 
$$  d \in \left(\bigcup_{(p,s) \in S^{(m)}} D(p,s) \right) \setminus E^{(m)}$$
we have $A_{\mathbf{r}}(0,d)  < \frac{1}{2} d$. 
In particular, if the algorithm terminates after $m$ iterations, then for all $d \in D(p_1,s_1) \setminus E^{(m)}$ we have $A_{\mathbf{r}}(0,d)  < \frac{1}{2} d.$
\end{prop}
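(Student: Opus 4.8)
The plan is to track what the algorithm guarantees about each pair $(p,s)$ it processes, and then combine these guarantees using Lemma \ref{lem:main_idea}, Proposition \ref{prop:admissible}, and the decomposition of $D(p,s)$ into its four ``branches.'' First I would record the key invariant that justifies the branching in Step 3: for any $(p,s)$ with $p$ beginning in $1$ and $s$ ending in $1$, every $d \in D(p,s)$ whose binary expansion is strictly longer than $\max(|p|,|s|)$ has a digit immediately after the prefix $p$ and a digit immediately before the suffix $s$, so that
$$ D(p,s) \setminus \{[s]_2\} = D(p0,0s) \cup D(p0,1s) \cup D(p1,0s) \cup D(p1,1s), $$
the removed element $[s]_2$ being the only $d \in D(p,s)$ (when $p \in \Pref(s)$) for which $p$ and $s$ overlap and no intermediate digits exist. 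This is the precise content that makes Step 1 a necessary safeguard.

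Next I would establish the base case of the induction driving the argument. When an admissible pair $(i,j)$ is found for $(p,s)$ in Step 2, Lemma \ref{lem:main_idea} certifies that $(i,j)$ is admissible for $(p,s)$, and then Proposition \ref{prop:admissible} gives $A_{\mathbf{r}}(0,d) < \tfrac{1}{2}d$ for \emph{all} $d \in D(p,s)$; such a pair is exactly what gets placed in $S_m$. So every $(p,s) \in S^{(m)}$ is fully resolved, and the only possible exception within $D(p,s)$ is an element explicitly caught and placed in $E_m$ by Step 1. I would then argue by induction on the iteration number, or equivalently on the tree of pairs generated: a pair $(p,s) \in K_m$ is resolved either by landing in $S_m$ (handled above), or by being split in Step 3 into four children in $K_{m+1}$. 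Using the branch decomposition displayed above, the desired conclusion for $d \in D(p,s)$ reduces to the same conclusion for $d$ lying in one of the four child sets $D(p0,0s), \ldots, D(p1,1s)$, together with the single possibly-exceptional element $[s]_2$ that Step 1 removes into $E_m$. This shows
$$ \left(\bigcup_{(p,s)\in S^{(m)}} D(p,s)\right) \setminus E^{(m)} \supseteq D(p_1,s_1) \setminus \left(E^{(m)} \cup \bigcup_{(p,s)\in K_{m+1}} D(p,s)\right), $$
and every $d$ in the left-hand family satisfies the bound.

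The main obstacle is bookkeeping rather than any single hard estimate: I must verify that the branching in Step 3 loses no element of $D(p,s)$ except the overlap element $[s]_2$, and that $[s]_2$ is only lost precisely when $p \in \Pref(s)$ (so that Step 1's guard is both correct and sufficient). The subtle point is that as $|p|$ and $|s|$ grow across iterations, the sets $D(p,s)$ at the leaves of the tree shrink and never recombine, so a nonterminating run could in principle leave infinitely many $d$ unresolved; this is exactly why the ``In particular'' clause requires termination, i.e.\ $K_{m+1} = \varnothing$. I would close by noting that when $K_{m+1} = \varnothing$ every branch of the tree rooted at $(p_1,s_1)$ terminates in $S^{(m)}$, so the residual family $\bigcup_{(p,s)\in K_{m+1}} D(p,s)$ is empty and the union over $S^{(m)}$ recovers all of $D(p_1,s_1)$ apart from the finitely many exceptions collected in $E^{(m)}$, yielding $A_{\mathbf{r}}(0,d) < \tfrac{1}{2}d$ for all $d \in D(p_1,s_1) \setminus E^{(m)}$.
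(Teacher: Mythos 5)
Your overall route is the paper's route: the paper proves this proposition only implicitly (``as a consequence of Lemma \ref{lem:main_idea} and the above discussion''), and the intended argument is precisely your combination of Proposition \ref{prop:admissible} for the pairs placed in $S^{(m)}$, the four-way branch decomposition of $D(p,s)$ for pairs split in Step 3, and Step 1 as the guard for the overlap element $[s]_2$. Your identity $D(p,s)\setminus\{[s]_2\} = D(p0,0s)\cup D(p0,1s)\cup D(p1,0s)\cup D(p1,1s)$ is correct under the invariant $|s|\geq|p|$ maintained by the algorithm (and when $s$ has leading zeros the removal is vacuous, since then $[s]_2\notin D(p,s)$, so Step 1's guard is merely harmlessly conservative).

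However, one step of your write-up is false as stated. Step 1 adds $[s]_2$ to $E_m$ \emph{only when} $A_{\mathbf{r}}(0,[s]_2)\geq [s]_2/2$; when this check passes, the element $d=[s]_2$ is resolved by direct computation and ends up neither in $E^{(m)}$ nor, in general, in any $D(p',s')$ with $(p',s')\in S^{(m)}$: it lies in no child set of $(p,s)$, and since each $d$ descends through a unique root-to-node path in the tree, the only sets containing it belong to its ancestors, all of which were split rather than placed in $S$. For such $d$ your displayed inclusion fails --- $d$ belongs to the right-hand side $D(p_1,s_1)\setminus\bigl(E^{(m)}\cup\bigcup_{(p,s)\in K_{m+1}}D(p,s)\bigr)$ but not to the left-hand side --- and the same defect affects your closing sentence that on termination the $S^{(m)}$-union ``recovers all of $D(p_1,s_1)$'' apart from $E^{(m)}$. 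The flaw is cosmetic rather than structural, because the proposition asserts the \emph{bound}, not membership in the $S^{(m)}$-union, and the Step-1-verified overlap elements satisfy the bound by construction. The repair is to run your induction on the property ``every $d\in D(p,s)$ outside $E^{(m)}$ and outside the $K_{m+1}$-leaf sets satisfies $A_{\mathbf{r}}(0,d)<d/2$,'' with three cases at each node: $(p,s)\in S_j$, handled by Proposition \ref{prop:admissible}; $d=[s]_2$, handled by Step 1 (either $d\in E_j$ or the bound is verified directly); or $d$ in one of the four child sets, handled by the inductive hypothesis. With that rephrasing both claims follow exactly as you intend, including the ``In particular'' clause when $K_{m+1}=\varnothing$.
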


We implemented the algorithm in Mathematica 13 \cite{Mat}. The file containing the code and its application to our case is available at the repository \cite{Git}. We point out that in the implementation we added some modifications and arguments which do not affect the basic functionality of the algorithm:
\begin{itemize}
    \item a list of pairs $(p,s)$ to be skipped if they occur is some $K_m$;
    \item output of example admissible pairs $(i,j)$ along with corresponding $(p,s)$;
    \item the maximal accepted value of $s_2(j)$;
    \item a variant of Step 3, where in the case $|p|<|s|$ we instead add $(p0,s)$, $(p1,s)$ into $K_{m+1}$ (which usually simplifies the results).
\end{itemize}

\subsection{Application of the algorithm}
We initially applied the algorithm to the input $(p_1,s_1) = (1,1)$, obtaining admissible pairs in most cases. However, in certain ``branches'' $(p,s)$ the algorithm seemed not to terminate after finitely many iterations. For the sake of further analysis, we collect these problematic cases in the following way:
\begin{itemize}
    \item $(p,s) = (100,001)$,
    \item $(p,s) = (10^5,1^401)$,
    \item $(p,s) = (1u,1^5)$ for any $u \in \{0,1\}^4$.    
\end{itemize}
The reason why the algorithm fails in these cases is rather simple to explain. First, the set $D(100,001)$ contains infinitely many elements of the form $d=2^k+1$ for which $A_{\mathbf{r}}(0,d) <d/2$ is not satisfied, as we have already proved. In the second case, the pair $(i,j) = (2,1)$ does not satisfy condition (b) of Lemma \ref{lem:main_idea}, while all the other pairs do not satisfy (c) due to lack of control of carries. In the third case it is again not possible to satisfy (c). 

If we rerun the algorithm and demand that it ignores the branches listed above, then after $9$ iterations it terminates successfully, returning the set $S^{(9)}$ of over $500$ pairs $(p,s)$, and $E^{(9)} =\{1,5,7,39\}$. A file containing the code and detailed results is available at the repository \cite{Git}.
Summarizing, we get the following result.

\begin{prop} \label{prop:application_1}
    For all odd integers $d$ such that
    $$d \not\in  D(100,001) \cup D(1,1^5) \cup D(10^5,1^401) \cup \{1,5,7,39\} $$
    we have $A_{\mathbf{r}}(0,d) < d/2$.
\end{prop}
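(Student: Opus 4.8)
The plan is to combine the algorithmic machinery already established in Proposition \ref{prop:algorithm_justification} with a careful treatment of the finitely many ``problematic branches'' that the algorithm cannot resolve on its own. Running the algorithm on the trivial input $(p_1,s_1)=(1,1)$ would in principle cover \emph{all} odd $d$, but as noted it fails to terminate on the branches $(100,001)$, $(10^5,1^401)$, and $(1u,1^5)$ for $u \in \{0,1\}^4$. The first step is therefore to run the algorithm with instructions to \emph{skip} exactly these branches, and to verify computationally that it then terminates after $9$ iterations, producing a set $S^{(9)}$ of admissible pairs together with the exceptional set $E^{(9)}=\{1,5,7,39\}$. By Proposition \ref{prop:algorithm_justification}, this immediately yields $A_{\mathbf{r}}(0,d)<d/2$ for every odd $d$ lying in some $D(p,s)$ with $(p,s)\in S^{(9)}$ but not in $E^{(9)}$.

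The crux of the argument is then to show that skipping those three families of branches loses \emph{only} the odd $d$ belonging to the three sets $D(100,001)$, $D(10^5,1^401)$, and $\bigcup_{u}D(1u,1^5)$, and that these three sets together are precisely $D(100,001)\cup D(10^5,1^401)\cup D(1,1^5)$. Here I would note that $D(1,1^5)=\bigcup_{u\in\{0,1\}^4}D(1u,1^5)$ after accounting for the short cases where the prefix and suffix overlap, so the union over the four-letter words $u$ collapses to the single condition ``$(d)_2$ ends in $1^5$''. The branching structure of Step 3 guarantees that every odd integer not removed by these three skips is eventually routed into some $(p,s)\in S^{(9)}$, so the only odd $d$ left uncovered are exactly those in the three named sets together with the exceptional values $\{1,5,7,39\}$ flagged by Step 1.

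Finally, I would assemble the conclusion: every odd $d$ outside
$$ D(100,001)\cup D(1,1^5)\cup D(10^5,1^401)\cup\{1,5,7,39\} $$
falls into a branch for which an admissible pair was found, so $A_{\mathbf{r}}(0,d)<d/2$ holds by Proposition \ref{prop:admissible}. The exceptional set $\{1,5,7,39\}$ is handled by the fact that these are precisely the $d=[s]_2$ values collected in $E^{(9)}$ via Step 1 (their smallness makes direct verification trivial), so no further case analysis is needed for the statement as phrased.

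The main obstacle I expect is \emph{not} the admissibility checks themselves, which are delegated to the computer and justified once and for all by Lemma \ref{lem:main_idea}, but rather the bookkeeping that certifies completeness of the case split: one must argue rigorously that the recursive refinement $(p,s)\mapsto\{(p0,0s),(p0,1s),(p1,0s),(p1,1s)\}$ (together with the $|p|<|s|$ variant used in the implementation) partitions the odd integers correctly, that the overlap cases $d=[s]_2$ are never silently dropped, and that the three skipped families account for exactly the claimed exceptional sets and nothing more. Getting this combinatorial accounting right — in particular verifying that $D(1,1^5)$ is genuinely the full union of the skipped $(1u,1^5)$ branches — is where the real care lies.
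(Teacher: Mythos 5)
Your proposal matches the paper's proof: the paper likewise runs the algorithm on input $(p_1,s_1)=(1,1)$ with the three problematic branch families skipped, verifies computationally that it terminates after $9$ iterations with $S^{(9)}$ (over $500$ pairs) and $E^{(9)}=\{1,5,7,39\}$, and concludes via Proposition \ref{prop:algorithm_justification}. Your added bookkeeping --- that Step 3's refinement exhausts the odd integers, that Step 1 catches the overlap cases $d=[s]_2$, and that $\bigcup_{u\in\{0,1\}^4} D(1u,1^5)$ collapses to $D(1,1^5)$ --- is exactly the implicit accounting the paper relies on, so the argument is correct and essentially identical.
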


We note that the exceptions $d=1,5,7$ are already covered by cases (i) and (ii) of Theorem \ref{thm:main1}. Hence, $d=39$ is the only ``true'' exception.

We now deal with the remaining cases, starting with $(p,s) = (100,001)$.

\begin{prop} \label{prop:problematic1}
For all $d \in D(100,001) \setminus \{2^k+1: m \geq 3\}$ we have $r_{(2^{\ell-1}+1)d}=1$. Consequently, $A_{\mathbf{r}}(0,d) < d/2$.
\end{prop}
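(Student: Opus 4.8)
The plan is to prove the claim by directly analyzing the binary expansion of the relevant index and computing $e_{11}$ of it modulo $2$. The target quantity is $r_{(2^{\ell-1}+1)d}$, where $\ell = \ell(d)$, and we want to show it equals $1$ for all $d \in D(100,001)$ except the excluded family $d = 2^k+1$. Since $d \in D(100,001)$, we know $(d)_2 = 10w01$ for some $w \in \{0,1\}^*$ (with the convention that for short $d$ the prefix $100$ and suffix $001$ may overlap, which is exactly the case $d=2^k+1$ that is excluded). So the first step is to write $(d)_2 = 10w01$ explicitly and carry out the binary addition $(d)_2 + (2^{\ell-1}d)_2$ to obtain the expansion of $(2^{\ell-1}+1)d$.

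The key observation is that multiplying $d$ by $2^{\ell-1}$ shifts its expansion left by $\ell-1$ places, so $(2^{\ell-1}d)_2$ is $(d)_2$ followed by $\ell-1$ zeros. Adding $(d)_2$ then places the original expansion $10w01$ in the low-order positions, overlapping the high-order copy only in the single leading/trailing digit. Because $d$ starts with $10$ and ends with $01$, the overlap is controlled: there is no carry propagation disturbing the interior, and the resulting expansion of $(2^{\ell-1}+1)d$ should be expressible as a concatenation built from $10w01$, a middle junction, and the shifted copy. Concretely I would carry out the schematic addition
$$
\begin{array}{llll}
  & 1 0 w 0 1 & 0^{\ell-1} & \\
+ &           & 1 0 w 0 1  &
\end{array}
$$
being careful about whether the low copy's leading $1$ meets the high copy's trailing $1$, and record the exact resulting word.

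Once the explicit expansion of $(2^{\ell-1}+1)d$ is in hand, the remaining step is to count occurrences of the pattern $11$ in it modulo $2$. The natural bookkeeping is the same as in Proposition \ref{prop:special_case_1}: split the count into the contribution from the high-order copy of $10w01$, the contribution from the low-order copy, and the contribution from the junction where the two copies meet (including any boundary $11$ created by adjacent digits). Since each full copy of $(d)_2$ contributes the same number $e_{11}(d)$ of occurrences of $11$, the two copies together contribute $2e_{11}(d) \equiv 0 \pmod 2$, so the parity of $e_{11}((2^{\ell-1}+1)d)$ is governed entirely by the junction term plus any corrections from the $0^{\ell-1}$ gap. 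The goal is to show this net junction contribution is odd, giving $r_{(2^{\ell-1}+1)d} = 1$, and the conclusion $A_{\mathbf{r}}(0,d) < d/2$ then follows because $2^{\ell-1}+1 < d/2$ whenever $\ell \geq 3$ and $(d)_2$ is not of the form $10^{k-1}1$.

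The main obstacle I anticipate is handling the boundary interactions correctly and cleanly separating the genuine case $D(100,001)$ from the degenerate overlap case $d=2^k+1$. When $w$ is empty or when the prefix $100$ and suffix $001$ overlap inside a short expansion, the word $10w01$ degenerates precisely to $10^{k-1}1$, which is $(2^k+1)_2$; this is exactly why that family must be excluded, and the argument must make transparent that for all other $d$ the digits $0$ flanking $w$ on both sides prevent spurious $11$ blocks from forming at the junction and ensure no carry alters the parity count. I would therefore devote the most care to verifying that the trailing $01$ of the high copy meeting the leading $10$ of the low copy produces exactly the intended odd number of $11$ occurrences, and that this holds uniformly in $w$, so that no further case splitting on the internal structure of $w$ is needed.
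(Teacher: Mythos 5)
Your overall strategy --- write $d=[100u]_2=[w001]_2$, add a shifted copy of $(d)_2$ to itself, and read the parity of $e_{11}$ as twice $e_{11}(d)$ plus a junction term --- is exactly the paper's method, and your instinct that the excluded family $2^k+1$ is the degenerate overlap case is also right. But the execution fails at both decisive points, because the shift you commit to is off by one. With $n=2^{\ell-1}+1$ the low copy's leading $1$ (at position $\ell-1$) collides with the high copy's trailing $1$, so contrary to your claim there \emph{is} a carry: the column gives $1+1$, writing $0$, and the carry is absorbed by the $0$ directly above it (coming from the suffix $001$). The result is
$$(2^{\ell-1}+1)d = [w\,0\,1\,0\,0\,0\,u]_2,$$
whose junction contains \emph{no} occurrence of $11$; hence $e_{11}((2^{\ell-1}+1)d)=2e_{11}(d)\equiv 0 \pmod 2$ and $r_{(2^{\ell-1}+1)d}=0$, the opposite of what you set out to show (check $d=73$: $65\cdot 73 = [1001010001001]_2$ with $e_{11}=0$). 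Your closing inequality is also false for every $d$, not just the excluded ones: since $d<2^{\ell}$ we always have $d/2 < 2^{\ell-1}+1$, so $2^{\ell-1}+1 < d/2$ never holds.

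Both defects are cured simultaneously by taking $n=2^{\ell-2}+1$, which is what the paper's displayed addition actually computes: the statement's exponent $\ell-1$ is a typo (as is its ``$m\geq 3$''), and the paper's own alignment places the low copy's leading $1$ under the middle $0$ of the prefix $100$. With that shift there is no carry anywhere, the junction reads $0110$, and
$$(2^{\ell-2}+1)d = [w\,0\,1\,1\,0\,u]_2, \qquad e_{11}\bigl((2^{\ell-2}+1)d\bigr) = 2e_{11}(d)+1 \equiv 1 \pmod 2,$$
uniformly in $w,u$ (no case split on the interior, and no issue when $w$ and $u$ overlap in $(d)_2$). Moreover $2^{\ell-2}+1 < d/2$ is equivalent to $d > 2^{\ell-1}+2$, which for odd $d\in D(100,001)$ fails precisely when $d=2^{\ell-1}+1$, i.e., exactly for the excluded family $2^k+1$ --- the clean match between the bound and the exception that your version cannot produce. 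Had you carried out the careful junction verification you promised, the even parity would have forced you to this correction; as written, however, the proposal commits to two false assertions (odd junction parity and $2^{\ell-1}+1<d/2$), so it does not prove the proposition.
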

\begin{proof}
Write $d = [w001]_2 = [100u]_2$ for some $w,u \in \{0,1\}^*$. Then $(2^{\ell-1}+1)_2 \cdot (d)_2$ yields

$$ \begin{array}{lllllllll}
        &   &   & 1 & 0  & 0 & u  \\
+       & w & 0 & 0 & 1  &   &    \\
\hline  & w & 0 & 1 & 1  & 0 & u         
\end{array}, $$
which implies $A_{\mathbf{r}}(0,d) \leq 2^{\ell-1}+1$. If $d$ is not of the form $2^k+1$, then $d/2 > 2^{\ell-1}+1$, and the result follows.
\end{proof}

We move on to the case $(p,s) = (10^5,1^401)$.

\begin{prop} \label{lem:problematic3}
For all $d \in D(10^5,1^401)$ and either $n = 2^{\ell-4}+1$ or $n = 2^{\ell-5}+1$ we have $r_{nd} = 1$. Consequently, $A_{\mathbf{r}}(0,d)<d/2$.
\end{prop}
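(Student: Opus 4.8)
The plan is to compute the two quantities $e_{11}((2^{\ell-4}+1)d)$ and $e_{11}((2^{\ell-5}+1)d)$ \emph{simultaneously} and show that they differ by exactly one modulo $2$; since exactly one of them is then odd, one of the two choices of $n$ yields $r_{nd}=1$. Throughout I would write $(d)_2 = 10^5\, t\, 1^401$, where $t$ is a (possibly empty) middle block, so that the digits of $d$ satisfy $d_{\ell-1}=1$, $d_{\ell-2}=\cdots=d_{\ell-6}=0$, and $d_5=d_4=d_3=d_2=1$, $d_1=0$, $d_0=1$. Since no element of $D(10^5,1^401)$ has its prefix and suffix overlapping, we have $\ell\geq 12$. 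The key structural quantity is the following: let $m$ be the least index $\geq 2$ with $d_m=0$. Because $d_2=\cdots=d_5=1$ and $d_{\ell-6}=0$, this $m$ is well defined and satisfies $6\leq m\leq \ell-6$, with $d_2=\cdots=d_{m-1}=1$.

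Next I would carry out both binary additions $(2^{\ell-4}+1)d=2^{\ell-4}d+d$ and $(2^{\ell-5}+1)d=2^{\ell-5}d+d$ exactly as in Example \ref{ex:basic_concept}, so that the shifted suffix $1^401$ is added onto the prefix $10^5$. In both cases the top digit $d_{\ell-1}=1$ of $(d)_2$ meets a shifted digit equal to $1$, producing a carry out of position $\ell-1$. The crucial point is that in each addition this carry then sweeps upward across the same maximal block of $1$'s of the shifted copy, namely the digits in positions $4,5,\dots,m-1$, turning them into $0$'s, and finally flips the $0$ in position $m$ to a $1$; the two additions differ only by the one‑position offset of the shift. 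This is precisely the ``uncontrolled carry'' that defeats Lemma \ref{lem:main_idea} in this branch, and taming it is the main obstacle of the argument. The resolution is that the carry behaves \emph{identically} for the two shifts, so its effect cancels when the two words are compared.

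With the carry understood I would read off the two resulting binary words. Writing $X=d_{\ell-1}\cdots d_{m+1}$ for the common untouched high part (which always retains its leading $1$) and $M=d_{\ell-7}\cdots d_0$ for the common low part (which always contains the suffix $1^401$), the results take the form
$$ (2^{\ell-4}+1)d = [\,X\,1\,0^{m-3}\,10100\,M\,]_2, \qquad (2^{\ell-5}+1)d = [\,X\,1\,0^{m-4}\,11010\,M\,]_2. $$
Here $X$ and $M$ contribute the same number of occurrences of $11$ to both words, and so does the boundary between $X$ and the digit $1$ that follows it (this contributes $1$ iff $d_{m+1}=1$). The short glue block $1\,0^{m-3}\,10100$ contains no occurrence of $11$, whereas $1\,0^{m-4}\,11010$ contains exactly one. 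Counting occurrences of $11$ therefore gives
$$ e_{11}((2^{\ell-5}+1)d) = e_{11}((2^{\ell-4}+1)d)+1, $$
so precisely one of the two values is odd, which proves the first assertion.

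Finally, for the consequence I would note that both $n=2^{\ell-4}+1$ and $n=2^{\ell-5}+1$ satisfy $n<d/2$: since $(d)_2$ begins with $1$ we have $d\geq 2^{\ell-1}$, whence $d/2\geq 2^{\ell-2}>2^{\ell-4}+1$. As $r_0=0$, exhibiting an index $n<d/2$ with $r_{nd}=1$ shows $A_{\mathbf{r}}(0,d)\leq n<d/2$. The only remaining points are bookkeeping: one must check that the two displayed normal forms persist in the degenerate situations $m=6$ (empty run above the suffix), $m=\ell-6$ (the flipped digit sits inside the prefix, shrinking $X$ to $10000$), and $t=\epsilon$ (so $\ell=12$). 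In each of these the comparison of the two glue blocks is unaffected, so the parity conclusion—and hence the result—survives.
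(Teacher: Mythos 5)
Your proof is correct and follows essentially the same route as the paper: the paper likewise performs the two additions $d+2^{\ell-4}d$ and $d+2^{\ell-5}d$, writing $d=[10^5u]_2=[w1^401]_2$, and compares the two results to see that the parities of $e_{11}(nd)$ differ, so that the uncontrolled carry (which is what blocks Lemma \ref{lem:main_idea} here) cancels between the two cases. The only difference is cosmetic: the paper absorbs the carry into a single symbol $v=w+1$ appearing in both additions, whereas you track it explicitly to the first zero digit $d_m$, yielding the same normal forms $X\,1\,0^{m-3}\,10100\,M$ and $X\,1\,0^{m-4}\,11010\,M$.
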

\begin{proof}
Write $d = [10^5u]_2 = [w1^401]_2$ and let $v$ denote the result of binary addition $w + 1$.
If $n = 2^{\ell-4}+1$, then $(n)_2 \cdot (d)_2$ yields

$$ \begin{array}{lllllllllll}
       &    &   &   & 1 & 0  &  0 & 0 & 0 & 0 & u \\
+      &  w & 1 & 1 & 1 & 1  &  0 & 1 &   &   &   \\
\hline &  v & 0 & 0 & 0 & 1  &  0 & 1 & 0 & 0 & u        
\end{array}, $$
while for $n = 2^{\ell-4}+1$ we obtain
$$ \begin{array}{lllllllllll}
       &    &   & 1 & 0 & 0  &  0 & 0 & 0 & u  \\
+      &  w & 1 & 1 & 1 & 1  &  0 & 1 &   &    \\
\hline &  v & 0 & 0 & 1 & 1  &  0 & 1 & 0 & u       
\end{array}. $$
By comparing the result of both operations, we see that the parity of $e_{11}(nd)$ differs depending on the choice of $n$. Clearly, we have $n < d/2$, and the result follows.
\end{proof}

We now deal with the remaining set $D(1,1^5)$. We will use the following lemma to filter out most of its elements, leaving only a few subcases to consider.

\begin{lem} \label{lem:suffix_reduction}
Let $k, i$ be integers such that $2 \leq i < k$. Let $p \in \{0,1\}^{*}$ begin with a $1$ and assume that $(i,1)$ is an admissible pair for $(p,a01^k)$, where $a \in \{0,1\}$. Then $(i,1)$ is also an admissible pair for $(p,a01^{k+2l})$ for any $l \geq 0$.
\end{lem}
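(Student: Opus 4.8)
The plan is to verify the three conditions (a), (b), (c) of Lemma \ref{lem:main_idea} for the pair $(i,1)$ applied to $(p, a01^{k+2l})$, given that they already hold for $(p, a01^k)$. Since $j=1$ is fixed, condition (a) depends only on $p$ and $i$, not on the suffix $s$; hence (a) is inherited verbatim from the hypothesis. The crux is therefore to understand how lengthening the suffix from $a01^k$ to $a01^{k+2l}$ affects the block $\sigma$ (the suffix of $(j[s]_2)_2 = ([s]_2)_2$ of length $|s|$) and the sum $\gamma = p + \sigma 0^{|p|-i}$.

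First I would compute $\sigma$ explicitly. Since $j=1$, we have $(j[s]_2)_2 = s = a01^{k}$ (or $a01^{k+2l}$), so $\sigma$ is simply the suffix of $s$ of length $|s|$, i.e. $\sigma = s$ itself. Thus lengthening the run of $1$'s by an even amount $2l$ changes $\sigma$ from $a01^{k}$ to $a01^{k+2l}$. The key observation is that the addition $\gamma = p + \sigma 0^{|p|-i}$ only interacts with $\sigma$ in its top $i$ digits (those overlapping $p$ after the shift by $|p|-i$), because the bottom $|p|-i$ positions of $\sigma 0^{|p|-i}$ are the appended zeros and the low digits of $\sigma$ pass through unchanged. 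Since $i < k$, the inserted extra $1$'s all lie strictly below the top $i$ digits of $\sigma$, so they never reach the region where $p$ is being added. Consequently the leftmost $i$ digits of $\sigma$, the carry behavior into position $i$, and hence $\gamma$, are identical for $a01^k$ and $a01^{k+2l}$. This immediately gives condition (c): if no carry is added to the leftmost digit of $\sigma$ in the $k$ case, the same holds in the $k+2l$ case, since the addition is byte-for-byte the same in the relevant window.

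For condition (b) I would track the change in $|p|_{11} + |\sigma|_{11} + |\gamma|_{11}$. The term $|p|_{11}$ is unchanged. For $|\sigma|_{11}$: going from $1^k$ to $1^{k+2l}$ inside $a01^{k}$ adds exactly $2l$ new occurrences of $11$, so $|\sigma|_{11}$ increases by $2l$, an even number. For $|\gamma|_{11}$: since $\gamma$ agrees with the previous case in its top part and its low part is just the unchanged low digits of $\sigma$ (where the extra $1$'s were inserted), the count $|\gamma|_{11}$ also increases by exactly $2l$. Hence the total parity of $|p|_{11}+|\sigma|_{11}+|\gamma|_{11}$ is preserved, so (b) holds for $a01^{k+2l}$ whenever it holds for $a01^k$.

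The main obstacle I anticipate is making the ``identical in the relevant window'' argument fully rigorous, i.e. carefully justifying that inserting $2l$ ones into the run does not alter the carries propagating into the top $i$ digits of $\sigma$ and into the overlap with $p$. The clean way to handle this is to split $\sigma 0^{|p|-i}$ at the boundary below the top $i$ positions and observe that the low block contributes no carry upward (its top portion participating in the sum is a fixed, short prefix of the run $1^{\bullet}$ that is the same length-$i$ string $1^i$ in both cases, since $i < k \le k+2l$). Once this boundary is pinned down, both the structural claim $\gamma = a\gamma' b$ from Lemma \ref{lem:main_idea} and the two parity counts follow by direct inspection, and the three conditions transfer mechanically from the $k$ case to the $k+2l$ case.
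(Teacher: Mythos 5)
There is a genuine gap, and it lies in your carry analysis. In the addition $p + \sigma 0^{|p|-i}$ with $\sigma = a01^k$, the word $p$ (aligned at the least significant end) overlaps the \emph{bottom} $i$ digits of $\sigma$, i.e.\ the tail $1^i$ of the run; writing $p = p_1 p_2$ with $|p_1| = i$, the overlap computes $p_1 + 1^i$. Since $p$ begins with a $1$, we have $[p_1]_2 + [1^i]_2 \geq 2^{i-1} + 2^i - 1 \geq 2^i$, so this block \emph{always} emits a carry, and that carry ripples upward through the entire remaining run $1^{k-i}$ --- precisely through any inserted ones --- zeroing them and being absorbed by the single $0$ (which is also the true reason condition (c) holds: the carry exists but stops before reaching $a$). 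Hence $\gamma = a\,1\,0^{k-i}\,v\,p_2$ where $1v = p_1 + 1^i$, and $|\gamma|_{11} = a + |vp_2|_{11}$ is \emph{independent} of $k$: lengthening the run changes $|\gamma|_{11}$ by $0$, not by $2l$. Your assertions that ``the low block contributes no carry upward,'' that the inserted ones ``never reach the region where $p$ is being added'' (the carry travels \emph{from} that region \emph{into} the inserted ones), and that ``$|\gamma|_{11}$ also increases by exactly $2l$'' are all false. Your parity conclusion survives only by accident: your model gives a total increment $2l + 2l$ while the truth is $2l + 0$, both even. The error is not cosmetic, because your reasoning applies verbatim to an increment of any size $m$ (yielding $m + m \equiv 0 \pmod 2$), so it would ``prove'' admissibility transfers to $a01^{k+m}$ for \emph{odd} $m$ as well --- which is false, since the correct parity condition, $|p|_{11} + (k-1) + a + |vp_2|_{11} \equiv 1 \pmod 2$ (the paper's \eqref{eq:admissible_2}), depends on $k$ modulo $2$, the $k$-dependence entering solely through $|\sigma|_{11} = k - 1$.

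A secondary issue is a hypothesis mismatch: the lemma assumes admissibility, i.e.\ \eqref{eq:A1} and \eqref{eq:A2}, \emph{not} conditions (a)--(c) of Lemma \ref{lem:main_idea}, so (a)--(c) are not available to be ``inherited''; indeed for $i = 2$ and $p = 10^{|p|-1}$ condition (a) can fail even for an admissible pair. This part is repairable: (c) is automatic in this configuration (as above), so by the computation in the proof of Lemma \ref{lem:main_idea}, \eqref{eq:A2} is equivalent to (b), and \eqref{eq:A1} for the longer suffix holds trivially since $i \geq 2$, $k \geq 3$ give $d/2 \geq 2^{\ell-2} + 3 > 2^{\ell-i} + 1$ --- but your write-up does not make this repair. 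The paper sidesteps the detour entirely: it performs the addition $(d)_2 + (2^{\ell-i}d)_2$ explicitly for $d \in D(p, a01^k)$, reduces \eqref{eq:A2} to the condition \eqref{eq:admissible_2}, which is free of the unknown blocks $u, w$ and depends on $k$ only modulo $2$, and then observes invariance under $k \mapsto k + 2l$.
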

\begin{proof}
Condition \eqref{eq:A1} clearly holds for $s = a01^{k+2l}$ so it remains to check \eqref{eq:A2}. Take $d \in D(p,a01^k)$ and let $d=[wa01^k]_2 = [pu]_2$ for some $w,u \in \{0,1\}^*$. Also write $p = p_1 p_2$, where $|p_1| = i$ and $|p_2| = |p|-i > 0$  (recall that admissibility entails $i < |p|$). Then binary multiplication $(2^{\ell-i}+1)_2 \cdot (d)_2$ yields
$$ \begin{array}{lllllllllll} 
       &    &   &   &         & p_1  &  p_2 & u  \\
+      &  w & a & 0 & 1^{k-i} & 1^i  &      &     \\
\hline &  w & a & 1 & 0^{k-i} & v    &  p_2 & u        
\end{array}, $$
where $1v$ is the result of binary addition $p_1 + 1^i$. Then \eqref{eq:A2} for $s= a01^k$ becomes
\begin{equation} \label{eq:admissible_2}
    1 \equiv |pu|_{11} + (|wa|_{11}+k-1) + (|wa|_{11} + a+ |v p_2 u|_{11}) \equiv k+a-1+|vp_2|_{11}-|p|_{11}  \pmod{2},
\end{equation}
where we used  equality
$$|vp_2u|_{11} - |vp_2|_{11} =  |p_2u|_{11} - |p_2|_{11} = |p_1 p_2u|_{11} -|p_1p_2|_{11} = |pu|_{11} -|p|_{11}.$$
Now, let $d' = [w'a01^{k+2m}]_2 = [pu']_2 \in D(p,a01^{k+2l})$. After performing the same operations we again arrive at the condition \eqref{eq:admissible_2}, where $k$ is replaced by $k+2l$. But this does not change parity so the result follows.
\end{proof}

In order to apply the lemma, we again run our algorithm separately four times for inputs of the form $(1,a01^k)$, where $a \in \{0,1\}$ and $k=5,6$. Here we also impose the following conditions:
\begin{itemize}
  \item $s_2(j) = 1$, i.e., $j=1$ in order to obtain admissible pairs $(i,1)$;
  \item limit the number of iterations to at most $5$ to guarantee $i < |p| \leq 5$.
\end{itemize}
Collecting the results of all four runs, we obtain 28 cases $(p,s)$ with admissible pairs of the form $(i,1)$, where $2 \leq i<5$. The exact list together with the corresponding $i$ can be found in the supplemental files. On the other hand, there remain 8 unresolved cases $(p,s)$, given in Table \ref{tab:no_admissible} below.

\begin{table}[H] 
\begin{tabular}{l|l|l}
case & $p$ & $s$                   \\ \hline
I & $10010$         & $001^{5}$   \\
II & $11111$        & $001^{5}$            \\
III & $10110$         & $101^{5}$        \\
IV & $11001$         & $101^{5}$   \\
V & $10110$         & $001^{6}$  \\
VI & $11001$         & $001^{6}$  \\
VII & $10010$         & $101^{6}$  \\
VIII & $11111$         & $101^{6}$  
\end{tabular}
\caption{The remaining cases}
\label{tab:no_admissible}
\end{table}

Let $R$ denote the set of $(p,s)$ listed in the table. By virtue of Lemma \ref{lem:suffix_reduction} and the assumption that $(d)_2$ contains at least one $0$, we thus get the following result.

\begin{prop}  \label{prop:application_2}
    For all 
    $$d \in D(1,1^5) \setminus \left(\{2^k-1: k \geq 5\} \cup \bigcup_{(p,s) \in R} \bigcup_{l = 0}^{\infty} D(p,s1^{2l}) \right)$$ we have $A_{\mathbf{r}}(0,d) < d/2$.
\end{prop}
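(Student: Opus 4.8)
The plan is to use the four algorithm runs on the inputs $(1,a01^k)$ with $a \in \{0,1\}$ and $k \in \{5,6\}$, and to promote their conclusions to arbitrarily long trailing runs of ones by means of Lemma \ref{lem:suffix_reduction}. First I would fix a $d \in D(1,1^5)$ lying outside the excluded set, so that in particular $d$ is not of the form $2^k-1$ and hence $(d)_2$ contains a $0$. Then $(d)_2$ ends in a maximal block $1^m$ with $m \geq 5$, necessarily preceded by a $0$; since the leading digit of $(d)_2$ is $1$, there is one further digit $a \in \{0,1\}$ before this $0$, so $(d)_2$ ends in $a01^m$. Writing $m = k + 2l$ with $k \in \{5,6\}$ chosen according to the parity of $m$ and $l \geq 0$, I obtain $d \in D(1, a01^{k+2l}) = D(1, (a01^k)1^{2l})$.

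Next I would recall the structure of the run on the fixed suffix $s = a01^k$. Because the variant of Step 3 only lengthens the prefix, the leaves of the refinement are indexed solely by prefixes $p$ of length at most $5$, and by Proposition \ref{prop:algorithm_justification} they cover $D(1, a01^k)$ up to the finitely many boundary values $d = [s]_2$ caught by Step 1. The $28$ \emph{resolved} leaves carry an admissible pair $(i,1)$ with $2 \leq i < 5$, whereas the $8$ \emph{unresolved} leaves are exactly the pairs $(p,s) \in R$ of Table \ref{tab:no_admissible}. The key point is that this prefix partition does not depend on the length of the suffix, so the same leaves describe $D(1, a01^{k+2l})$ for every $l \geq 0$.

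It then remains to classify $d$ according to its leaf. If $d$ lies in a resolved leaf $(p, a01^k)$, then since $i \leq 4 < 5 \leq k$ the hypothesis $2 \leq i < k$ of Lemma \ref{lem:suffix_reduction} is met, and the lemma transfers admissibility of $(i,1)$ from $(p, a01^k)$ to $(p, a01^{k+2l})$; Proposition \ref{prop:admissible} then yields $A_{\mathbf{r}}(0,d) < d/2$. If instead $d$ lies in an unresolved leaf, then $d \in D(p, (a01^k)1^{2l})$ for some $(p,s) \in R$, which is one of the families removed in the statement. As $d$ was taken outside the excluded set and is not of the form $2^k-1$, the second possibility cannot occur, so $d$ sits in a resolved leaf and the desired inequality follows.

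The step demanding the most care is the coverage claim of the second paragraph, namely that the prefix refinement leaves no $d$ unaccounted for. The delicate instances are the short expansions, where the prefix of length at most $5$ overlaps the suffix $a01^{k+2l}$ of length at least $7$, and the boundary values $d = [s]_2$ flagged by Step 1. All of these are already subsumed by Proposition \ref{prop:algorithm_justification}, so the genuine task is to certify, via the computer-assisted runs, that stopping after five iterations leaves precisely the eight frontier cases of $R$ and produces no exceptional values other than the numbers $2^k-1$; granting this, the transfer of admissibility through Lemma \ref{lem:suffix_reduction} and the final bookkeeping are routine.
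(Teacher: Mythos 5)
Your proposal matches the paper's proof essentially step for step: the same reduction of a trailing run $1^m$ to a suffix $a01^{k+2l}$ with $k \in \{5,6\}$ and $a \in \{0,1\}$, the same four algorithm runs restricted to $j=1$ and at most five iterations (yielding the $28$ resolved leaves with admissible pairs $(i,1)$, $2 \leq i < 5$, and the $8$ unresolved cases constituting $R$), and the same transfer of admissibility via Lemma \ref{lem:suffix_reduction} followed by Proposition \ref{prop:admissible}. The computational facts you flag as requiring certification are precisely what the paper delegates to its supplemental files, so your argument contains no gap beyond what the paper itself assumes.
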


We now deal with the remaining cases.
\begin{prop}
    For all $(p,s) \in R$, $l \geq 0$ and $d \in D(p,s1^{2l})$ we have $A_{\mathbf{r}}(0,d) < d/2$.
\end{prop}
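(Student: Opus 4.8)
The remaining cases in $R$ consist of eight specific prefix/suffix pairs $(p,s)$, each accompanied by an infinite family of suffix extensions $s1^{2l}$. Since the algorithm failed to find a single pair $(i,1)$ working uniformly for these $(p,s)$, the plan is to handle each case by a direct binary-addition argument, in the same spirit as Propositions \ref{prop:problematic1} and \ref{lem:problematic3}, but now exploiting the extra structure revealed by the long run of trailing ones in the suffix $001^5$, $101^5$, $001^6$, or $101^6$. The key idea is that within each case I would not insist on the same multiplier $j=1$ but instead allow a small odd $j$ (or, following the two-index trick of Lemma \ref{lem:main_idea}, compare two carefully chosen indices), chosen so that the resulting carries interact predictably with the block $1^{k+2l}$ regardless of $l$.

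Concretely, I would fix $(p,s) \in R$ and write $d = [pu]_2 = [w s 1^{2l}]_2$ for $u,w \in \{0,1\}^*$, and then perform the binary addition $p + \sigma 0^{|p|-i}$ as in Lemma \ref{lem:main_idea}, but track explicitly how the trailing block $1^{k+2l}$ contributes to $e_{11}(jd)$. The crucial observation is that multiplying $d$ by a small odd $j$ produces, in the low-order region, a suffix whose $11$-count changes by a fixed amount as $l$ grows by $1$ but whose \emph{parity} is stable, exactly as in the parity-invariance argument at the end of Lemma \ref{lem:suffix_reduction}. Thus I expect that for each of the eight cases a single explicit choice of multiplier (or pair of indices, in the style of Proposition \ref{lem:problematic3} where comparing $n = 2^{\ell-4}+1$ and $n=2^{\ell-5}+1$ forced an odd total) yields an index $nd < d/2$ with $r_{nd}=1$, and that this choice works simultaneously for all $l \geq 0$ by the parity-stability of the long run of ones.

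The plan of the proof is therefore: (1) treat the eight cases of Table \ref{tab:no_admissible} in small groups according to which suffix pattern they carry, since cases sharing a suffix such as $001^5$ will admit nearly identical additions; (2) for each group, exhibit the relevant binary addition diagram $p + (\text{shift of } jd)$, read off the three (or two) $11$-counts line by line as in the proof of Lemma \ref{lem:main_idea}, and verify that their sum is odd; (3) confirm that the parity does not depend on $l$, using the same cancellation $|vp_2u|_{11} - |vp_2|_{11} = |pu|_{11} - |p|_{11}$ that drove Lemma \ref{lem:suffix_reduction}; and (4) check the size constraint $nd < d/2$, which holds because the chosen index has binary length at most $\ell-1$ with a lexicographically smaller leading prefix, exactly as verified in Example \ref{ex:basic_concept} and Proposition \ref{prop:problematic1}.

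The main obstacle I anticipate is controlling the carries in the cases where $p$ is a run of ones, most notably $p = 11111$ (cases II and VIII) and to a lesser extent $p = 11001$ (cases IV and VI): adding a shifted copy of $d$ whose relevant digits are $1$'s can trigger a carry that propagates all the way through the prefix and alters its leading digits, which would both spoil the parity bookkeeping and violate condition (c) of Lemma \ref{lem:main_idea}. This is precisely the failure mode that prevented the algorithm from terminating on the branch $(1u,1^5)$. To overcome it I would, in those cases, either choose the multiplier so that the addition lands on a $0$ in the suffix (the leading $0$'s of $001^k$ and $101^k$ are there exactly to absorb carries), or fall back on the two-index comparison of Proposition \ref{lem:problematic3}, where one does not need the absolute parity of a single $e_{11}(nd)$ but only that the two candidate indices give \emph{different} parities, which is far more robust against carry propagation. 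Once the carry behaviour is pinned down for each case, the remaining verifications are the routine counting and size checks described above.
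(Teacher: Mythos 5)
Your toolkit matches the paper's: the actual proof disposes of cases III--VI via the admissible pair $(i,j)=(4,5)$, of cases I and VII via a three-term argument (one of $r_d, r_{3d}, r_{(2^{\ell-3}+3)d}$ equals $1$), and uses two-index comparisons in the style of Proposition \ref{lem:problematic3} for several subcases, all with the $l$-stability of parity tracked exactly as you describe. But your plan stops short of a proof precisely where the difficulty is concentrated, in cases II and VIII with $p=11111$, and there it contains a genuine gap. Your guiding expectation that ``a single explicit choice of multiplier (or pair of indices) \dots works simultaneously for all $l \geq 0$'' is false in those cases: since the suffix run $1^{5+2l}$ grows with $l$ and may abut a leading run of ones of unknown length, the paper must split each of II and VIII into \emph{four} further subcases according to the maximal run of ones beginning $(d)_2$ (e.g.\ $p=1^{5+2l}00$ with $s=0001^{5+2l}$ or $s=1001^{5+2l}$, versus $p=1^a0$ with $a>5+2l$ or $5 \leq a < 5+2l$), and the working choices are themselves $l$-dependent: $(i,j)=(6+2l,1)$ in subcase IIb and $(i,j)=(7+2l,\,2^{6+2l}-1)$ in subcase VIIIa, the latter requiring a preliminary subtraction $(2^{6+2l}d)_2-(d)_2$ to determine the relevant suffix. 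Nothing in your outline generates this decomposition or these choices, and without it the carry behaviour you worry about cannot be ``pinned down.''

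Two further specific points. First, your two fallback devices (landing on a $0$ of the suffix, or the two-index comparison) do not cover subcases IIa and VIIIb: there the product $(2^{3+2l}+1)d$ (resp.\ $5d$) contains a digit $c$ that genuinely cannot be determined from the prefix/suffix data alone, and the paper's fix is a third technique absent from your plan --- carry the unknown $c$ symbolically through the addition and verify that the total $11$-count is odd for \emph{both} values of $c$ (its contribution is $2c$, hence parity-invariant). Second, your step (4) treats the size check $n < d/2$ as routine (``binary length at most $\ell-1$ with a lexicographically smaller leading prefix''), but in subcase VIIIa the candidate $(2^{\ell-7-2l}(2^{6+2l}-1)+1)_2 = 1^{6+2l}0^{\ell-8-2l}1$ shares its entire leading block $1^{6+2l}$ with $(d)_2$, so a short-prefix comparison fails; the paper must invoke the non-overlap of $p$ and $s$, forcing $(d)_2 = 1^{6+2l}00v101^{6+2l}$, to conclude. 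In summary, your proposal would carry through for cases I and III--VII essentially as the paper does, but for II and VIII it names the obstacle without supplying the case split, the $l$-dependent multipliers, or the unknown-carry argument that the paper needs to overcome it.
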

\begin{proof}
    We refer to the case numbers according to Table \ref{tab:no_admissible} (where we append $1^{2l}$ to $s$).
    
    First, we claim that for each of the cases III--VI, the pair $(i,j)=(4,5)$ is admissible. Since the computations are very similar in each case, we only show this for case III. First, we can quickly check that if $d \in D(10110,101^{5+2l})$, then $(5d)_2$ has the suffix $101^{2+2l}011$ of length $7+2l$. If we write $5d =[w101^{2+2l}011]_2$ and $d=[10110u]$, then binary addition $(2^{\ell-4} \cdot 5 d)_2 + (d)_2$ gives
    $$ \begin{array}{lllllllllll}
       &   &   &   &   & 1 & 0 & 1 & 1 & 0 & u \\
+      & w & 1 & 0 & 1^{1+2l} & 1 & 0  & 1 & 1 & & \\
\hline & w & 1 & 1 & 0^{1+2l} & 0 & 1 & 1 & 0 & 0 & u
\end{array}.$$
Adding the number occurrences of $11$ in each line we get
$$ (1+|u|_{11}) + (|w1|_{11} + 2+2l) + (|w1|_{11} + 2+|u|_{11}) \equiv 1 \pmod{2}. $$
At the same time, $\ell(2^{\ell-4} \cdot 5 +1) = \ell -1$ and $(2^{\ell-4} \cdot 5 +1)_2$ has a prefix $1010$ lexicographically smaller than $1011$, which means that $2^{\ell-4} \cdot 5 +1 < d/2$.

We move on to cases I and VII which are again treated in a similar fashion, thus we focus on the former one. This time we modify our method slightly and prove that if $d \in D(10010,001^{5+2l})$, then at least one of $r_{d}, r_{3d}, r_{(2^{\ell-3}+3)d}$ equals $1$. First, $11011$ is the prefix of $(3d)_2$ of length $5$. If we write $d =[w001^{5+2l}]_2$ and $3d=[11011u]_2$, then binary addition $(2^{\ell-3}d)_2 + (3d)_2$ gives
$$ \begin{array}{lllllllllll}
       &   &   &   &   & 1 & 1 & 0 & 1 & 1 & u \\
+      & w & 0 & 0 & 1^{1+2l} & 1 & 1  & 1 & 1 &  & \\
\hline & w & 0 & 1 & 0^{1+2l} & 1 & 1 & 0 & 0 & 1 & u
\end{array}.$$
Adding the number occurrences of $11$ in each line we get
$$ (2+ |1u|_{11}) +  (|w|_{11} + 4+2l) + (|w|_{11}+1+|1u|_{11}) \equiv 1 \pmod{2}. $$
This time $\ell(2^{\ell-3} + 3) = \ell - 2$ so we immediately get $2^{\ell-3} + 3 < d/2$.

We move on to the hardest two cases:  II and VIII. Starting with case II, we need consider a few further possibilities, concerning the prefix and suffix $p,s$ of $(d)_2$.

\textbf{Case IIa}: $p = 1^{5+2l}00$ and $s=0001^{5+2l}$.

We claim that at least one of $r_{d}, r_{(2^{3+2l}+1)d}, r_{(2^{\ell-2}+2^{3+2l}+1)d}$ equals $1$. First we compute how the prefix of $((2^{3+2l}+1)d)_2$ of length $8+2l$ looks like:
$$\begin{array}{llllllll}
       &   &         & 1 & 1       & 1 & 1 & \cdots \\
+      &   & 1^{3+2l} & 1 & 1       & 0 & 0 & \cdots  \\
\hline & 1 & 0^{3+2l} & 1 & \bar{c} & c & c & u      
\end{array},$$
where $c \in \{0,1\}$ and $u$ is a suffix of the result. The value of $c$ depends on whether there is a carry added directly to the left of $u$. Now,  write $d = [w0001^{3+2l}]_2$. For either value of $c$, binary addition $((2^{3+2l}+1)d)_2 + (2^{\ell-2}d)_2$ gives
$$\begin{array}{lllllllllll}
       &   &   &   & 1 & 0^{3+2l} & 1       & \bar{c} & c & c & u \\
+      & w & 0 & 0 & 0 & 1^{3+2l} & 1       & 1       &   &   &   \\
\hline & w & 0 & 1 & 0 & 0^{3+2l} & \bar{c} & c       & c & c & u
\end{array}.$$
Adding the number occurrences of $11$ in each line we get
$$  (1 + |cu|_{11}) + (|w|_{11} + 4+2l) +  (|w|_{11} + 2c + |cu|_{11}) \equiv 1 \pmod{2}.$$ We have $\ell(2^{\ell-2}+ 2^{3+2l}+1) = \ell-1$ and the binary expansion $(2^{\ell-2}+ 2^{3+2l}+1)_2$ starts with $10$, which means that $2^{\ell-2}+ 2^{3+2l}+1 < d/2$.

\textbf{Case IIb}: $p = 1^{5+2l}00$ and $s=1001^{5+2l}$.

A quick calculation shows that the pair $(i,j) = (6+2l,1)$ is admissible.

\textbf{Case IIc}: $p = 1^{a}0$ and $s=001^{5+2l}$, where $a > 5+2l$.

We claim that for either $n = 2^{\ell-4-2l}+1$ or $n = 2^{\ell-5-2l}+1$ we have $r_{nd} = 1$.
Write  $d = [w 00 1^{5+2l}]_2 = [1^a0u] = $ for some $u,w \in \{0,1\}^*$.
In the first case $(n)_2 \cdot (d)_2$ yields
$$ \begin{array}{lllllllll}
       &    &   &   & 1^{3+2l} & 1  & 1^{a-4-2l} & 0 & u \\
+      &  w & 0 & 1 & 1^{3+2l} & 1  &           &   &   \\
\hline &  w & 1 & 0 & 1^{3+2l} & 0  & 1^{a-4-2l} & 0 & u         
\end{array}, $$
while in the second case we get
$$\begin{array}{llllllllll}
       &   &   & 1^{4+2l} & 1   & 1^{a-5-2l} & 0 & u \\
+      & w & 0 & 1^{4+2l} & 1   &         &   &   \\
\hline
	   & w & 1 & 1^{4+2l} & 0   & 1^{a-5-2l} & 0 & u     
\end{array}.$$
We see that the parity of $e_{11}(nd)$ differs depending on the choice of $n$, and our claim follows (it is clear that $n < d/2$).

\textbf{Case IId}: $p = 1^{a}0$ and $s=001^{5+2l}$, where $l \geq 1$ and $5 \leq a < 5+2l$.

In a similar fashion as in the previous case one can show that for either $n = 2^{\ell-a+1}+1$ or $n = 2^{\ell-a}+1$ we have $r_{nd} = 1$.

Finally, we consider case VIII, which is again split into further subcases.

\textbf{Case VIIIa}: $p = 1^{6+2l}00$ and $s=101^{6+2l}$.

We will show that $(i,j) = (7+2l, 2^{6+2l}-1)$ is an admissbile pair.
We first determine the suffix of $((2^{6+2l}-1)d)_2$ of length $8+2l$ through the subtraction $(2^{6+2l}d)_2 - (d)_2$.  We have
$$ \begin{array}{llllll}
	   & \cdots & 1 & 1 & 0^{5+2l} & 0 \\
 -     & \cdots  & 1 & 0 & 1^{5+2l} & 1 \\     
\hline & w & 0 & 0 & 0^{5+2l} & 1
\end{array},$$ 
where $w$ is a prefix of $((2^{6+2l}-1)d)_2$. Now,  write $d = [1^{6+2l}00u]_2$. Binary addition $(d)_2+(2^{\ell-7-2l}(2^{6-2l}-1)d)_2$ gives
$$\begin{array}{lllllll}
       &   &   & 1^{6+2l}  & 0 & 0 & u  \\
+      & w & 0 & 0^{6+2l} & 1 &  &     \\
\hline & w & 0 & 1^{6+2l} & 1 & 0 & u
\end{array}.$$
By summing the number of occurrences of $11$ in each line, we obtain 
$$(5+2l +|u|_{11}) + |w|_{11} + (|w|_{11}+6+2l+|u|_{11}) \equiv 1 \pmod{2}.$$ 
Verification of the inequality $A_{\mathbf{r}}(0,d) <d/2$ requires a bit more care compared to the previous cases.
We have $\ell(2^{\ell-7-2l}(2^{6+2l}-1) +1) = \ell-1$. Since $p$ and $s$ cannot overlap in $(d)_2$, we must have $(d)_2 = 1^{6+2l}00v101^{6+2l}$ for some $v \in \{0,1\}^*$. The result now follows from the fact that the expansion $(2^{\ell-7-2l}(2^{6+2l}-1) +1)_2 = 1^{6+2l} 0^{\ell-8-2l} 1$ is lexicographically smaller than the prefix of $(d)_2$ of length $\ell-1$, regardless of $v$.

\textbf{Case VIIIb}: $p = 1^{6+2l}01$ and $s=101^{6+2l}$.

The idea is similar as in the case IIa. We claim that at least one of $r_{d}, r_{5d}, r_{(2^{\ell-3-2l}+5)d}$ equals $1$. We have $(5d)_2 = 1 0 0 1^{4+2l}  0  c  u $, where $c \in \{0,1\}$ and $u$ is a suffix of $(5d)_2$. Writing $d = [w101^{6+2l}]_2$, binary addition $(5d)_2 + (2^{\ell-3-2l}d)_2$ becomes
$$
\begin{array}{llllllllllll}
       &   &   &   & 1 & 0 & 0 & 1^{2+2l} & 1 & 1 & c & u \\
+      & w & 1 & 0 & 1 & 1 & 1 & 1^{2+2l} & 1 &   &   &   \\
\hline & w & 1 & 1 & 1 & 0 & 0 & 1^{2+2l} & 0 & 1 & c & u
\end{array}.
$$
The sum of the number of occurrences of $11$ in each line satisfies 
$$ (3+2l+c + |cu|_{11}) + (|w1|_{11}+ 5+2l) + (|w1|_{11}+3+2l+c+|cu|_{11}) \equiv 1 \pmod{2}.  $$
It is simple to check that $2^{\ell-3-2l}+5<d/2$ so our claim holds.

\textbf{Case VIIIc}: $p = 1^{a}0$ and $s=101^{6+2l}$, where $a > 6+2l$.

Similarly as in Case IIc, for either $n = 2^{\ell-5-2l}+1$ or $n = 2^{\ell-6-2l}+1$ we have $r_{nd} = 1$.

\textbf{Case VIIId}: $p = 1^{a}0$ and $s=101^{6+2l}$, where $5 \leq a < 6+2l$.

Similarly as in Case IId, for either  $n = 2^{\ell-a+1}+1$ or $n = 2^{\ell-a}+1$ we have $r_{nd} = 1$.
\end{proof}

\subsection{Finishing the proof}

To conclude this section, we summarize the above results and show that part (iii) of Theorem \ref{thm:main1} holds, thus completing the proof of the theorem.

\begin{prop}
For all integers $d \geq 3$ not lying in the set
$$ \{ 2^k-1: k \geq 3 \text{ is odd} \} \cup \{ 2^k+1: k \geq 2   \} \cup \{39\} $$
we have $A_{\mathbf{r}}(d) < d/2$.
\end{prop}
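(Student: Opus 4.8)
The plan is to assemble the statement from the chain of propositions already established in this section, after two preliminary reductions that bring every $d$ into a form where those propositions apply. I would first handle the reduction from even to odd differences. Since the exceptional set consists only of odd numbers, every even $d$ is automatically non-exceptional; writing $d = 2^a d'$ with $d'$ odd and $a \geq 1$, the identity $A_{\mathbf{r}}(0,2m) = A_{\mathbf{r}}(0,m)$ gives $A_{\mathbf{r}}(0,d) = A_{\mathbf{r}}(0,d')$. If $d'$ is a non-exceptional odd number, the odd case yields $A_{\mathbf{r}}(0,d') < d'/2 \leq 2^{a-1} d' = d/2$. If $d'$ is itself an exceptional odd number, I would instead invoke the explicit values $A_{\mathbf{r}}(0,d') \leq (d'+3)/2$ coming from Propositions \ref{prop:special_case_1} and \ref{prop:special_case_2} (and the direct computation for $39$); since every exceptional odd value satisfies $d' \geq 5 > 3$, one still gets $(d'+3)/2 < d' \leq d/2$. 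Thus it suffices to treat odd $d \geq 3$. The second reduction removes the all-ones case: if $(d)_2 = 1^k$ then $d = 2^k-1$, which for even $k$ gives $A_{\mathbf{r}}(0,d) = 1 < d/2$ and for odd $k \geq 3$ lies in the exceptional set; so I may assume $(d)_2$ contains at least one $0$.

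With these reductions in place, the core of the argument is a straightforward dispatch. Proposition \ref{prop:application_1} already covers every odd $d$ outside $D(100,001) \cup D(1,1^5) \cup D(10^5,1^401) \cup \{1,5,7,39\}$. The three residual families are then eliminated one by one: Proposition \ref{prop:problematic1} handles $D(100,001)$ apart from the differences $2^k+1$; Proposition \ref{lem:problematic3} settles all of $D(10^5,1^401)$; and Proposition \ref{prop:application_2} together with the preceding proposition (which resolves every $(p,s) \in R$ and every $D(p,s1^{2l})$) handles $D(1,1^5)$ apart from the differences $2^k-1$ with $k \geq 5$. The leftover finite exceptions from Proposition \ref{prop:application_1} are $\{1,5,7,39\}$, of which $1 < 3$ is discarded and the rest are exceptional.

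The final step is the bookkeeping showing that the accumulated exceptions coincide exactly with the set in the statement. I would merge the pieces as follows: the leftover $\{2^k+1 : k \geq 3\}$ from $D(100,001)$ together with $5 = 2^2+1$ gives precisely $\{2^k+1 : k \geq 2\}$; the leftover $\{2^k-1 : k \geq 5\}$ from $D(1,1^5)$ splits into its odd-$k$ members (exceptional) and its even-$k$ members, which were already removed in the second reduction as all-ones words, and combining the odd-$k$ part with $7 = 2^3-1$ recovers $\{2^k-1 : k \geq 3 \text{ odd}\}$; and $39$ remains as the single genuine exception. I expect the main obstacle to be not any individual computation but verifying that this merging is exhaustive — that the sets $D(p,s)$ covered by Proposition \ref{prop:application_1} and the three residual families together exhaust all odd $d$ with a zero in their expansion, so that no non-exceptional odd $d$ slips through an overlap — and that the even-to-odd transfer for exceptional $d'$ genuinely needs the quantitative bound $A_{\mathbf{r}}(0,d') \leq (d'+3)/2$ rather than the weaker $A_{\mathbf{r}}(0,d') < d'/2$.
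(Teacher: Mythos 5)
Your proposal follows the same architecture as the paper's proof: for odd $d$ it dispatches to Propositions \ref{prop:application_1}, \ref{prop:problematic1}, \ref{lem:problematic3}, \ref{prop:application_2} and the case analysis for the set $R$; it disposes of the all-ones words $d = 2^k-1$ with $k$ even via $A_{\mathbf{r}}(0,d) = 1$; and it reduces even $d$ to odd via $A_{\mathbf{r}}(0,2d) = A_{\mathbf{r}}(0,d)$. Your bookkeeping of the exceptional sets (merging $5$ with $\{2^k+1 : k \geq 3\}$, merging $7$ with the odd-$k$ part of $\{2^k-1 : k \geq 5\}$, discarding $1$, keeping $39$) is correct, and your treatment of even $d$ with exceptional odd part via $(d'+3)/2 < d' \leq d/2$ for $d' \geq 5$ is a clean variant that, unlike the paper's chain $A_{\mathbf{r}}(0,d') \leq (d'+3)/2 \leq d/4 + 3/2 < d/2$ (valid only for $d > 6$), needs no separate check of $d = 6$. (Both you and the paper read the statement as being about $A_{\mathbf{r}}(0,d)$; the $A_{\mathbf{r}}(d)$ in the proposition is evidently a typo, as the claim is false for $A_{\mathbf{r}}(d)$, e.g.\ $A_{\mathbf{r}}(3) = 5$.)

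However, your even-to-odd reduction has a genuine hole: the dichotomy ``$d'$ is a non-exceptional odd number covered by the odd case, or $d'$ is exceptional'' is not exhaustive, because $d' = 1$ is neither. For $d = 2^a$ a power of two, your first branch would require $A_{\mathbf{r}}(0,1) < 1/2$, which is false: $A_{\mathbf{r}}(0,1) = 3$ (since $r_0 = r_1 = r_2 = 0$ and $r_3 = 1$). The paper handles powers of two as a separate explicit case, using $A_{\mathbf{r}}(0,2^k) = A_{\mathbf{r}}(0,1) = 3$; patching your argument the same way gives $3 < 2^{a-1}$ for $a \geq 3$, but it also exposes a real problem at $d = 4$: one has $A_{\mathbf{r}}(0,4) = 3 \not< 2$ (the terms $r_0, r_4, r_8$ are all $0$ and $r_{12} = 1$), so $d = 4$ violates the stated inequality. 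The paper's own phrase ``which gives the desired inequality for any $k \geq 2$'' is an off-by-one slip --- the inequality $3 < 2^{k-1}$ holds only for $k \geq 3$ --- so strictly speaking the proposition (and part (iii) of Theorem \ref{thm:main1}) needs $d = 4$ excluded or weakened there. In short: you must add the power-of-two case explicitly, and in doing so you would have caught an overlooked exception that the paper's proof glosses over; everything else in your proposal is sound.
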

\begin{proof}
The propositions proved in this section collectively show that the claim is true for all odd integers $d \geq 3$, except for possibly $d = 2^k-1$ with $k \geq 2$ even. But for such $d$ we get $A_{\mathbf{r}}(0,d) = 1 < d/2$ as well. 

For even $d$, we use the equality $A_{\mathbf{r}}(0,2d) = A_{\mathbf{r}}(0,d)$. We have $A_{\mathbf{r}}(0,2^k) = A_{\mathbf{r}}(0,1) = 3,$ which gives the desired inequality for any $k \geq 2$. When $d = 2^k d'$, where $k \geq 1$ and $d' \geq 3$ is odd, we get for $d > 6$ the inequality
$$ A_{\mathbf{r}}(0,d) = A_{\mathbf{r}}(0,d') \leq \frac{1}{2}(d'+3) \leq \frac{d}{4} + \frac{3}{2} < \frac{d}{2}.$$
Finally, $ A_{\mathbf{r}}(0,6) = 1$ and the proof is finished.
\end{proof}

\section{Proof of Theorem \ref{thm:main2} and Theorem \ref{thm:main3}} \label{sec:main2_main3}

In this section we consider the values $A_{\mathbf{r}}(d)$ when $d=2^k+1$ or $d=2^k-1$. To begin, we make a few simple observations. Let $n \in \N$ and write 
$$n = 2^k m + l,$$ 
where $m \in \N$ and $0 \leq l < 2^{k}$. Adding $d = 2^k+1$ to $n$ amounts to increasing both $m$ and $l$ by $1$, unless $l = 2^k-1$. Similarly, adding $d = 2^k-1$ corresponds to increasing $m$ and decreasing $l$ by $1$, unless $l=0$.

At the same time, we have
\begin{equation} \label{eq:RS_identity}
 r_{n}\equiv \begin{cases} 
r_{m} + r_{l} \pmod{2}  &\text{if } l <2^{k-1}, \\
 r_{2m+1} + r_{l} \pmod{2}  &\text{if } l \geq 2^{k-1}.
\end{cases}
\end{equation}

We would like to show that the existence of a long progression imposes some relation on $m$ and $l$. Suppose that $A_{\mathbf{r}}(n,2^k+1) \geq a + 1$  for some $a \in \N$. If we additionally assume that $0 \leq l < 2^{k-1}-a$, then by \eqref{eq:RS_identity} we obtain  $r_m r_{m+1}\cdots r_{m+a}=r_l r_{l+1}\cdots r_{l+a}$. Similarly, if $2^{k-1} \leq l < 2^{k}-a$, then we get $r_{2m+1} \cdots r_{2(m+a)+1} =r_{2l+1} \cdots r_{2(l+a)+1}$.  In the same fashion, when considering a progression of difference $d=2^k-1$, under suitable assumptions we arrive at the equalities $r_{m}\cdots r_{m+a}=(r_{l-a} \cdots r_l)^R$ and $r_{2m+1} \cdots r_{2(m+a)+1}=(r_{l-a} \cdots r_l)^R$.

This leads to the study of subwords of $\mathbf{r}$ and its subsequence $(r_{2m+1})_{m \geq 0}$, as well as their reversals.  This is conveniently done using the description of the Rudin--Shapiro sequence in terms of a substitution $\rho$, which we now recall. Let $\rho$ be defined on the alphabet $\Sigma_{\mathbf{r}} = \{00,01,10,11\}$ by
$$ \rho(00)=00 \, 01, \qquad \rho(01)=00 \,10 \qquad \rho(10)=11 \, 01, \qquad \rho(11)=11 \, 10.$$
Letting $\rho(vw) = \rho(v)\rho(w)$ for $v,w \in \Sigma_{\mathbf{r}}$, we can extend $\rho$ to all finite and infinite words over $\Sigma_{\mathbf{r}}$. Then $\mathbf{r}$ can be identified with the fixed point of $\rho$ starting with $00$, where each block in $\Sigma_{\mathbf{r}}$ is split into two individual letters. More precisely, for $t \in \N$ let $\rho^t$ be the $t$-fold composition of $\rho$ with itself, where $\rho^{0}$ is the identity. Then
$$ \mathbf{r} = \lim_{t \to \infty} \rho^t(00) = 00 \, 01 \, \rho(01) \, \rho^2(01) \cdots. $$
Here the limit means that each of the words $\rho^t(00)$ is a prefix of the infinite word $\mathbf{r}$ and their length tends to infinity. For each $t \in \N$ we can also write
\begin{equation} \label{eq:aligned_blocks}
\mathbf{r} = \rho^t(00) \rho^t(01) \rho^t(00) \rho^t(10) \cdots.
\end{equation}
For $w \in \Sigma_{\mathbf{r}}$ each block $\rho^t(w)$, treated as a word over $\{0,1\}$, has length $2^{t+1}$ and we call it a $2^{t+1}$-aligned block. Hence, for every $t \in \N$ there are precisely four distinct $2^{t+1}$-aligned blocks. Note that the blocks $\rho^t(00),\rho^t(11)$ appear at even positions (counting from $0$) in the representation \eqref{eq:aligned_blocks}, while the blocks $\rho^t(01),\rho^t(10)$ -- at odd positions. Also observe that for any $w \in \Sigma_{\mathbf{r}}^*$ we have $\rho(\overline{w}) = \overline{\rho(w)}$.

It is useful to have a similar description in terms of a substitution for the subsequence $(r_{2n+1})_{n \geq 0}$. For $n \in \N$ we put $s_n = r_{2n+1}$ and $\mathbf{s} = (s_n)_{n \geq 0}$. We obtain a nice property linking $2^{t+1}$-aligned subwords of $\mathbf{r}$ and $\mathbf{s}$.

\begin{prop} \label{prop:interesting_property}
The sequence $\mathbf{s}$ is the fixed point starting with $01$ of the substitution $\sigma$, defined by
$$ \sigma(01) = 01 \: 00, \quad \sigma(00) = 01 \: 11, \quad  \sigma(11) = 10 \: 00, \quad \sigma(10) = 10 \: 11.$$
Moreover, for any odd integer $t \geq 1$ we have
$$ \sigma^t(01) = \rho^t(01)^R, \quad \sigma^t(00) = \rho^t(11)^R, \quad \sigma^t(11) = \rho^t(00)^R, \quad  \sigma^t(10) = \rho^t(10)^R,  $$
while for any even integer $t \geq 2$ we have
$$ \sigma^t(01) = \rho^t(10)^R, \quad \sigma^t(00) = \rho^t(00)^R, \quad \sigma^t(11) = \rho^t(11)^R, \quad  \sigma^t(10) = \rho^t(01)^R.  $$
\end{prop}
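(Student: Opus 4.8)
The plan is to deduce both parts from one intertwining relation between $\rho$ and $\sigma$ on the four letters of $\Sigma_{\mathbf{r}}$, reducing everything to finite verifications.

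For the first part, I would group $\mathbf{s}$ into $2$-blocks $S_n = s_{2n}s_{2n+1} = r_{4n+1}r_{4n+3}$ and compare with $\mathbf{r}$ grouped as $R_n = r_{2n}r_{2n+1}$, so that $R$ is the fixed point $R=\rho(R)$. Because $\rho$ has constant length $2$, the block of length $4$ of $\mathbf{r}$ occupying positions $4n,\dots,4n+3$ is precisely $\rho(R_n)$; reading off its second and fourth binary digits gives $S_n = \phi(R_n)$, where $\phi\colon \Sigma_{\mathbf{r}}\to\Sigma_{\mathbf{r}}$ is the coding $\phi(ab)=a\overline{b}$ (so $\phi(00)=01$, $\phi(01)=00$, $\phi(10)=11$, $\phi(11)=10$). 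Hence $S=\phi(R)$ letterwise. A direct check on the four letters gives the key identity $\phi\circ\rho = \sigma\circ\phi$, whence $S = \phi(R) = \phi(\rho(R)) = \sigma(\phi(R)) = \sigma(S)$. Since $S_0 = \phi(00)=01$ and $\sigma(01)$ begins with $01$, $S$ is the fixed point of $\sigma$ starting with $01$, proving the first assertion.

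For the second part I would prove by induction on $t\ge 1$ the single statement $\rho^t(x)^R = \sigma^t(h_t(x))$ for all $x\in\Sigma_{\mathbf{r}}$, where the reversal is over $\{0,1\}$ and $h_t$ alternates with parity: $h_t=\alpha$ for $t$ odd and $h_t=\tau$ for $t$ even. Here $\tau(ab)=ba$ is the swap $01\leftrightarrow 10$ and $\alpha$ is the swap $00\leftrightarrow 11$ fixing $01,10$. Applying the self-inverse maps $\alpha,\tau$ to rearrange this identity recovers verbatim the two displayed tables. The base case $t=1$ is the single-letter reversal identity $\rho(x)^R=\sigma(\alpha(x))$, checked on the four letters. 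For the step, write $\rho(x)=uv$ with $u,v\in\Sigma_{\mathbf{r}}$; since binary reversal is an anti-morphism and $\sigma^t$ is a morphism, the induction hypothesis yields
\[ \rho^{t+1}(x)^R = \bigl(\rho^t(u)\rho^t(v)\bigr)^R = \rho^t(v)^R\,\rho^t(u)^R = \sigma^t(h_t(v))\,\sigma^t(h_t(u)) = \sigma^t\bigl(h_t(v)\,h_t(u)\bigr). \]
It therefore suffices to verify the two-block identity $h_t(v)\,h_t(u) = \sigma(h_{t+1}(x))$, since then the right-hand side becomes $\sigma^t(\sigma(h_{t+1}(x))) = \sigma^{t+1}(h_{t+1}(x))$. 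This identity splits into two parity cases (using $\alpha(v)\alpha(u)=\sigma(\tau(x))$ when $t$ is odd and $\tau(v)\tau(u)=\sigma(\alpha(x))$ when $t$ is even), each of which is a finite check over the four choices of $x$ and the corresponding $u,v$ read off from the definition of $\rho$.

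The conceptual crux, and the only place where care is genuinely needed, is the reversal bookkeeping in the second part: one must notice that binary reversal is an anti-morphism (so the two halves $\rho^t(u),\rho^t(v)$ swap order) and that consequently the correct auxiliary permutation $h_t$ alternates between $\alpha$ and $\tau$ with the parity of $t$. Once the statement is formulated with this parity-dependent permutation, both the base case and the inductive identity $h_t(v)h_t(u)=\sigma(h_{t+1}(x))$ reduce to routine tabulations, as does the relation $\phi\circ\rho=\sigma\circ\phi$ driving the first part.
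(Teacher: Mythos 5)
Your proposal is correct and takes essentially the same route as the paper: your coding $\phi$ with the intertwining $\phi\circ\rho=\sigma\circ\phi$ is a repackaging of the paper's derivation of $\sigma$ by extracting the odd-position digits from the length-$4$ block substitution, and your induction via the parity-dependent permutation $h_t$ is exactly the paper's parity-split induction (base case $t=1$ checked on the four letters, inductive step using that binary reversal is an anti-morphism), merely condensed into a single identity. All the finite verifications you defer to tabulation do check out, including the two-block identities $\alpha(v)\alpha(u)=\sigma(\tau(x))$ and $\tau(v)\tau(u)=\sigma(\alpha(x))$.
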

\begin{proof}
The word $\mathbf{r}$ is the fixed point starting with $0001$ of the following substitution, derived from $\rho$:
$$ 0001 \mapsto 0001 \: 0010, \quad 0010 \mapsto 0001 \: 1101, \quad  1101 \mapsto 1110 \: 0010, \quad 1110 \mapsto 1110 \: 1101.$$
Note that the first and third digit (counting from $0$) uniquely determine each block of length $4$. By extracting the digits at odd positions from the above substitution, we obtain precisely $\sigma$. But this corresponds to taking the subsequence $(r_{2n+1})_{n \geq 0}$, and thus we get the first part of the assertion.

In order to prove the identities linking $\rho$ and $\sigma$, we use induction on $t$. For $t = 1$ the result follows immediately from the definition of $\sigma$. Now let $t \geq 2$ be even. By the inductive assumption we have for example
\begin{align*} 
\sigma^t(01) &= \sigma^{t-1}(01) \sigma^{t-1}(00) = \rho^{t-1}(01)^R \rho^{t-1}(11)^R \\
 &= (\rho^{t-1}(11) \rho^{t-1}(01) )^R = \rho^t(10)^R,
\end{align*}
as claimed. The verification for other blocks and even values of $t$ is similar. 
\end{proof}

We now prove a few (standard) auxiliary results concerning the subwords of the infinite words $\mathbf{r}$ and $\mathbf{s}$. Lemmas \ref{lem:negation}--\ref{lem:uniquely_determined_position} have the same statement and almost identical proof for both $\mathbf{r}$ and $\mathbf{s}$. Hence, in these lemmas we let $\mathbf{u}$ denote any of $\mathbf{r},\mathbf{s}$ and only provide the proof for $\mathbf{u} = \mathbf{r}$.  The first result says that the set of subwords of $\mathbf{u}$ is invariant with respect to negation.

\begin{lem} \label{lem:negation}
We have $w \in \Sub(\mathbf{u})$ if and only if $\overline{w} \in \Sub(\mathbf{u})$. 
\end{lem}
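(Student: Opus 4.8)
The plan is to show the negation-invariance of the subword set of $\mathbf{r}$ by exploiting the fact that $\rho$ commutes with binary negation, a property already recorded in the excerpt as $\rho(\overline{w}) = \overline{\rho(w)}$ for all $w \in \Sigma_{\mathbf{r}}^*$. The key structural input is that every subword $w$ of $\mathbf{r}$ occurs inside some finite prefix, and every such prefix is contained in a block of the form $\rho^t(00)$ for $t$ large enough. So the real task reduces to understanding how negation interacts with the aligned-block decomposition of $\mathbf{r}$.

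First I would observe that it suffices to prove one implication, since negation is an involution ($\overline{\overline{w}} = w$), so the ``if'' and ``only if'' directions are formally equivalent. To prove that $w \in \Sub(\mathbf{r})$ implies $\overline{w} \in \Sub(\mathbf{r})$, I would take $w$ to occur in $\mathbf{r}$, hence $w \in \Sub(\rho^t(00))$ for some $t$. The crucial point is that $\overline{\rho^t(00)} = \rho^t(\overline{00}) = \rho^t(11)$, using the commutation $\rho(\overline{\cdot}) = \overline{\rho(\cdot)}$ iterated $t$ times. Therefore $\overline{w} \in \Sub(\rho^t(11))$. The remaining step is to argue that $\rho^t(11)$ is itself a subword of $\mathbf{r}$: this follows from the decomposition \eqref{eq:aligned_blocks}, which explicitly shows $\rho^t(11)$ appearing as one of the aligned blocks in $\mathbf{r}$ (for instance, the block $\rho^t(10)$ is immediately followed in the substitution structure by blocks that realize all four aligned types, and in particular $11$ occurs in the fixed point $\mathbf{r}$ as a letter of $\Sigma_{\mathbf{r}}$, so $\rho^t(11)$ occurs as a factor). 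Chaining these, $\overline{w} \in \Sub(\mathbf{r})$, as desired.

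The only point requiring a little care is verifying that each of the four aligned blocks $\rho^t(00), \rho^t(01), \rho^t(10), \rho^t(11)$ genuinely occurs in $\mathbf{r}$; equivalently, that all four letters of $\Sigma_{\mathbf{r}}$ occur in the $\Sigma_{\mathbf{r}}$-fixed point. This is immediate from the substitution rules, since starting from $00$ one generates $00\,01$, then $00\,01\,00\,10$, then a word containing $11$, so all four letters appear, and applying $\rho^t$ to any occurrence of $11$ yields an occurrence of $\rho^t(11)$ in $\mathbf{r}$. I expect this bookkeeping to be the main (though minor) obstacle, since the statement hinges entirely on confirming that negating an aligned block produces another block that is still present in the sequence; the algebraic heart of the argument, namely $\overline{\rho^t(00)} = \rho^t(11)$, is essentially free given the recorded commutation property. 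For $\mathbf{u} = \mathbf{s}$ the identical argument goes through using $\sigma$ in place of $\rho$, once one notes that $\sigma$ enjoys the same commutation with negation (which can be read off from its defining rules, e.g.\ $\sigma(\overline{01}) = \sigma(10) = 10\,11 = \overline{01\,00} = \overline{\sigma(01)}$).
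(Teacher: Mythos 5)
Your proof is correct and follows essentially the same route as the paper: both hinge on the commutation $\rho(\overline{\,\cdot\,}) = \overline{\rho(\,\cdot\,)}$ together with the identity $\overline{\rho^t(00)} = \rho^t(11)$, reducing everything to the occurrence of the negated aligned block in $\mathbf{u}$. The only cosmetic difference is the finishing step: the paper embeds $\rho^t(00)$ into $\rho^{t+3}(11)$ and negates back into the prefix $\rho^{t+3}(00)$, whereas you negate first and observe that $\rho^t(11)$ occurs in $\mathbf{r}$ because the letter $11$ appears in the $\Sigma_{\mathbf{r}}$-fixed point --- an equivalent bookkeeping choice, and your explicit check that $\sigma$ also commutes with negation correctly covers the $\mathbf{u}=\mathbf{s}$ case the paper leaves to the reader.
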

\begin{proof}
If $w \in \Sub(\mathbf{r})$, then $w$ is a subword of $\rho^t(00)$ for some $t \geq 1$, and thus a subword of  $\rho^{t+3}(11) = \overline{\rho^{t+3}(00)}$. This means that $\overline{w} \in \Sub(w)$. 
The other inclusion follows by $\overline{\overline{w}} = w$. 
\end{proof}

The next lemma shows that from the knowledge of a sufficiently long prefix of $\mathbf{u}$ we can infer all the subwords of given length and their positions modulo a power of $2$.

\begin{lem} \label{lem:appearance_of_subwords}
Let $w \in \Sub(\mathbf{u})$ have length $|w| \leq 2^t +1$, where $t \geq 1$ is an integer. Then $w \in \Sub(p)$, where $p$ is the prefix of $\mathbf{u}$ of length $7 \cdot 2^{t+1}$. More precisely, if $w$ appears in $\mathbf{u}$ at some position modulo $2^{t+1}$, then it appears in $p$ at the same position modulo $2^{t+1}$.
\end{lem}
\begin{proof}
Divide $\mathbf{r}$ into $2^t$-aligned blocks:
\begin{equation} \label{eq:Rudin_Shapiro_blocks}
\mathbf{r} = \rho^{t-1}(00) \rho^{t-1}(01) \rho^{t-1}(00) \rho^{t-1}(10) \cdots.
\end{equation}
Any occurrence of a subword $w$ of length an most $2^t+1$ is contained in a concatenation of $2^t$-aligned blocks $\rho^{t-1}(A) \rho^{t-1}(B)$, where one of $A,B$ belongs to the set $\{00,11\}$, and the other to $\{01,10\}$. Hence, in order to prove both parts of the claim it is sufficient to show that each possible concatenation $\rho^{t-1}(A) \rho^{t-1}(B)$ appears within $14$ initial terms of \eqref{eq:Rudin_Shapiro_blocks}.
In the case $t=1$ we can check this by inspection of the initial $2$-aligned blocks:
$$00 \: 01 \: 00 \: 10 \: 00 \: 01 \: 11 \: 01 \: 00 \: 01 \: 00 \: 10 \: 11 \: 10.$$
For $t \geq 2$ the result follows by applying $\rho^{t-1}$ to this word block-by-block.
\end{proof}

We now show that the positions at which the same word $w$ occurs in $\mathbf{u}$ are unique modulo a power of $2$. Note that unlike in the previous lemma, the length of $w$ is also bounded from below.

\begin{lem} \label{lem:uniquely_determined_position}
Let $w \in \Sub(\mathbf{u})$ have length $|w| \geq 9$ and let $t \geq 3$ be the integer such that $2^t+1 \leq |w| \leq 2^{t+1}$.
If $w$ is a subword of a $2^{t+1}$-aligned block in $\mathbf{u}$, then its position modulo $2^{t+2}$ in $\mathbf{u}$ is uniquely determined.
\end{lem}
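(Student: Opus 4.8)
Following the convention stated before Lemma~\ref{lem:negation}, I treat only $\mathbf{u}=\mathbf{r}$; the case $\mathbf{u}=\mathbf{s}$ is identical once the aligned blocks of $\mathbf{r}$ are replaced by their reversals, as permitted by Proposition~\ref{prop:interesting_property}. Write $L=|w|$. The plan is to show that $w$ alone determines both the offset of its occurrence inside a $2^{t+1}$-aligned block and the parity of that block's position, which together pin down the residue modulo $2^{t+2}$. Decompose $\mathbf{r}$ into $2^{t+1}$-aligned blocks as in \eqref{eq:aligned_blocks}. Since $L \geq 2^{t}+1 > 2^{t}$, every occurrence of $w$ inside such a block must cross its central boundary, so it appears at some offset $p$ with $0 \leq p \leq 2^{t}-1$ in one of the four blocks $\rho^{t}(X)$, $X \in \Sigma_{\mathbf{r}}$, with the central boundary falling at internal position $q = 2^{t}-p \in \{1,\dots,2^{t}\}$ of $w$. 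Because $\rho^{t}(00),\rho^{t}(11)$ occur only at even positions and $\rho^{t}(01),\rho^{t}(10)$ only at odd positions of \eqref{eq:aligned_blocks}, the starting index of the block is $\equiv 0 \pmod{2^{t+2}}$ when $X \in \{00,11\}$ and $\equiv 2^{t+1}\pmod{2^{t+2}}$ when $X \in \{01,10\}$. Hence the position of the occurrence modulo $2^{t+2}$ equals $p$ or $2^{t+1}+p$ according to this parity; since $p < 2^{t+1}$, uniqueness modulo $2^{t+2}$ is equivalent to showing that $w$ determines $q$ and the parity class $\{00,11\}$ versus $\{01,10\}$ of $X$.

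The second step is routine once $q$ is known. Given $q$, the prefix $w_{0}\cdots w_{q-1}$ is the length-$q$ suffix of the first half $\rho^{t-1}(A)$ with $A \in \{00,11\}$, and the suffix $w_{q}\cdots w_{L-1}$ is the length-$(L-q)$ prefix of the second half $\rho^{t-1}(B)$ with $B \in \{01,10\}$; both parts are nonempty because $q \geq 1$ and $L-q \geq L-2^{t}\geq 1$. The identity $\rho(\overline{x})=\overline{\rho(x)}$ gives $\rho^{t-1}(11)=\overline{\rho^{t-1}(00)}$, so the two candidate length-$q$ suffixes are complementary, hence distinct, and $w_{0}\cdots w_{q-1}$ determines $A$. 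Likewise $\rho^{t-1}(01)$ and $\rho^{t-1}(10)$ begin with $0$ and $1$ respectively (the leading letter of $\rho^{t-1}(X)$ equals that of $X$), so the nonempty prefix $w_{q}\cdots w_{L-1}$ determines $B$. Thus $q$ determines the pair $(A,B)$, and therefore $X$ together with its parity class.

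The heart of the argument is consequently to show that $q$ is uniquely determined by $w$. Suppose $w$ also occurs with a different internal boundary $q'$, say $q<q'$, inside a block $\rho^{t}(X')$ with halves $\rho^{t-1}(A'),\rho^{t-1}(B')$. Aligning the two occurrences, the factor $w_{q}\cdots w_{q'-1}$ is at once a nonempty prefix of the odd-type block $\rho^{t-1}(B)$ (from the first occurrence) and a suffix of the even-type block $\rho^{t-1}(A')$ (from the second); equivalently, the two occurrences impose $2^{t}$-grids on the window $w$ that are shifted by $q'-q \in \{1,\dots,2^{t}-1\}$, hence genuinely distinct modulo $2^{t}$. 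In other words, the same window of length $L \geq 2^{t}+1$ would admit two parsings into $2^{t}$-aligned blocks. I would rule this out using the recognizability of level-$t$ aligned blocks: the sequence of $2^{t}$-aligned block types is the $\rho$-fixed point \eqref{eq:Rudin_Shapiro_blocks}, in which even-type blocks $\{00,11\}$ and odd-type blocks $\{01,10\}$ strictly alternate, the four blocks are pairwise distinct, and they satisfy the complementarity and leading-letter properties used above. Comparing the block types forced on the overlap by the two parsings then yields a contradiction for every nonzero shift.

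This recognizability step is the main obstacle: it is a quantitative form of the standard synchronization (Moss\'e-type) property, and the hypothesis $t \geq 3$, equivalently $L \geq 9$ and $2^{t}\geq 8$, is exactly what makes the overlap long enough to contain a full discriminating sub-block and to eliminate the small-scale coincidences that occur for $t\leq 2$. I would carry it out either by a direct case analysis of the possible overlaps of two aligned blocks, or, more systematically, by desubstituting one level: inverting $\rho$ on aligned blocks passes from the $2^{t}$-grid to the $2^{t-1}$-grid, reduces the shift, and sets up an induction terminating at a base case $t=3$, which can be verified against a prefix of $\mathbf{r}$ of bounded length as furnished by Lemma~\ref{lem:appearance_of_subwords}. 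The case $\mathbf{u}=\mathbf{s}$ then follows verbatim after reversing all aligned blocks via Proposition~\ref{prop:interesting_property}.
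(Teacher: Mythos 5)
Your reduction is sound and, in its final step, coincides with the paper's: you correctly observe that every occurrence of $w$ in a $2^{t+1}$-aligned block $\rho^t(X)$ must cross the central boundary, that the two halves are $\rho^{t-1}(A)$, $\rho^{t-1}(B)$ with $A\in\{00,11\}$, $B\in\{01,10\}$, and that once the cut position $q$ is known, the complementarity $\rho^{t-1}(11)=\overline{\rho^{t-1}(00)}$, $\rho^{t-1}(10)=\overline{\rho^{t-1}(01)}$ together with the leading letters determines $(A,B)$, hence $X$ and the parity of the block index; this is exactly the mechanism by which the paper upgrades a residue modulo $2^{t+1}$ to one modulo $2^{t+2}$ (the paper phrases it via the letter $C$ with $\rho(C)=AB$, which is your $X$).

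The genuine gap is the step you yourself flag: the uniqueness of $q$. Your assertion that ``comparing the block types forced on the overlap by the two parsings then yields a contradiction for every nonzero shift'' is precisely the recognizability statement that the lemma encodes, and it cannot be derived from the alternation and complementarity properties alone: if it could, the lemma would hold for all $t\geq 1$, whereas uniqueness genuinely fails for short words --- this is exactly why the hypothesis $|w|\geq 9$ (equivalently $t\geq 3$) appears, and why small-scale coincidences (compare Lemma \ref{lem:common_subwords}, where palindromic-type coincidences persist up to length $14$) must be eliminated by a finite computation rather than by pure overlap combinatorics. The paper closes this gap with an induction you only gesture at: it writes $w=uv$ with $u\in\Suf(R_{2n})$, $v\in\Pref(R_{2n+1})$ relative to the $2^t$-grid, notes that each of $u,v$ is a subword of a $2^t$-aligned block and that at least one of them has length at least $2^{t-1}+1$, and applies the lemma at level $t-1$ to that half; this pins down the position of $w$ modulo $2^{t+1}$ and shows the factorization $w=uv$ is unique, after which the parity argument you gave finishes the step. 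The base case $t=3$ is verified by a computer search of the prefix of length $7\cdot 2^5$, which suffices by Lemma \ref{lem:appearance_of_subwords}. Your second sketched route (``desubstituting one level \dots\ induction terminating at a base case $t=3$'') is this argument in spirit, but as written it is a plan rather than a proof: you neither formulate the inductive statement being propagated nor justify that a shifted parsing desubstitutes coherently to a smaller shift, so the heart of the lemma remains unestablished.
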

\begin{proof}
We prove the result by induction on $t$. In the base case $t=3$,  Lemma \ref{lem:appearance_of_subwords} guarantees that if $w$ occurs at some position modulo $2^5$ in $\mathbf{r}$, then it already occurs at the same position modulo $2^5$ in the prefix $p$ of $\mathbf{r}$ of length $|p|=7 \cdot 2^5$. A computer search shows that for each subword $w \in \Sub(p)$ of suitable length there is exactly one such position.

Now let $t \geq 4$ and express $\mathbf{r}$ as a concatenation of $2^{t}$-aligned blocks:
$$\mathbf{r} = \rho^{t-1}(00) \rho^{t-1}(01) \cdots =  R_0 R_1 \cdots.$$
By the assumption $w$ is a subword of a $2^{t+1}$-aligned block $R_{2n} R_{2n+1}$ for some $n \in \N$. We can write $w = uv$, where $u \in \Suf(R_{2n})$ and $v \in \Pref(R_{2n+1})$. Hence, each of $u,v$ is a subword of a $2^t$-aligned word and at least one of them has length greater than or equal $2^{t-1}+1$. In either case it follows from the inductive assumption that the position of $w$ modulo $2^{t+1}$ in $\mathbf{r}$ is uniquely determined. (This also shows that the factorization $w = uv$ is unique.) 

In order to determine the position of $w$ modulo $2^{t+2}$ it remains to deduce the parity of $n$. 
We have $R_{2n} = \rho^{t-1}(A)$ and $R_{2n+1} = \rho^{t-1}(B)$  for some $A \in \{00,11\}, B \in \{01,10\}$. Because both words $u,v$ are nonempty, at least the last letter of $\rho^{t-1}(A)$ and the first letter of $\rho^{t-1}(B)$ are known. As $\rho^{t-1}(11) = \overline{\rho^{t-1}(00)}$ and $\rho^{t-1}(10) = \overline{\rho^{t-1}(01)}$, this already allows us to determine $A$ and $B$. Let $C \in \Sigma_{\mathbf{r}}$ be such that $\rho(C) = AB$. Then $R_{2n} R_{2n+1} = \rho^t(C)$ and thus $C$ is the $n$th subsequent $2$-aligned block in $\mathbf{r}$ (counting from $0$). Hence, $n$ is even if $C \in \{00,11\}$, and odd otherwise.
\end{proof}

The following two lemmas provide some relations between the subwords of $\mathbf{r}$ and $\mathbf{s}$. First, we show that the subwords of $\mathbf{r}$ are precisely the reversed subwords of $\mathbf{s}$ and relate the positions at which $w$ and $w^R$ can appear in the respective sequences.

\begin{lem} \label{lem:reversed_subwords}
We have $w \in \Sub(\mathbf{r})$ if and only if $w^R \in \Sub(\mathbf{s})$.
Moreover, assume that $w$ has length $|w| \geq 2^t+1$ and is a subword of a $2^{t+1}$-aligned block in $\mathbf{r}$ for some integer $t \geq 3$. If $w = r_n r_{n+1} \cdots r_{n+|w|-1}$ and $w^R = s_m s_{m+1} \cdots s_{m+|w|-1}$ for some integers $n,m \geq 0$, then
\begin{equation} \label{eq:reversed_subwords}
n + m + |w| \equiv 0 \pmod{2^{t+2}}.
\end{equation}
\end{lem}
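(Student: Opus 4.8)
The plan is to prove both assertions using the block decomposition from Proposition \ref{prop:interesting_property}, which links $\rho^t$ and $\sigma^t$ via reversal. First I would establish the set-theoretic equivalence $w \in \Sub(\mathbf{r}) \iff w^R \in \Sub(\mathbf{s})$. Since $\mathbf{s}$ is the fixed point of $\sigma$ and every subword of $\mathbf{s}$ lies inside some $\sigma^t(C)$ for a block $C \in \Sigma_{\mathbf{r}}$ (more precisely inside a concatenation $\sigma^t(A)\sigma^t(B)$ of two aligned blocks, by the same block-covering argument used in Lemma \ref{lem:appearance_of_subwords}), I can use the identities of Proposition \ref{prop:interesting_property} to rewrite each $\sigma^t$-aligned block as the reversal of a $\rho^t$-aligned block. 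Reversing a concatenation swaps the order of factors, so a subword of $\sigma^t(A)\sigma^t(B)$ reverses to a subword of $\rho^t(B')\rho^t(A')$ for the appropriate blocks $A',B'$; invoking that these are themselves aligned blocks of $\mathbf{r}$ gives $w^R \in \Sub(\mathbf{r})$, and the converse follows symmetrically (or by applying $R$ twice).

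For the congruence \eqref{eq:reversed_subwords}, the idea is that reversal of an aligned block of length $2^{t+1}$ sends position $j$ within the block to position $2^{t+1}-1-j$. I would set up coordinates carefully: if $w$ occupies positions $n$ through $n+|w|-1$ in $\mathbf{r}$ and sits inside a $2^{t+1}$-aligned block occupying positions $2^{t+1}N$ through $2^{t+1}(N+1)-1$, then reversing that block maps $w$ to $w^R$ occupying a window ending at position $2^{t+1}-1-(n - 2^{t+1}N)$ within the reversed block. Because the identities in Proposition \ref{prop:interesting_property} split into an odd-$t$ case and an even-$t$ case, I expect the aligned block containing $w^R$ in $\mathbf{s}$ to sit at a position whose index $M$ (modulo the relevant power of $2$) is determined by $N$ together with the block-type correspondence. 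Tracking the arithmetic of the left endpoints, the leading contributions $2^{t+1}N$ and $2^{t+1}M$ combine with the two offsets inside the block to yield $n + m + |w| \equiv 2^{t+1} \cdot (\text{something}) \pmod{2^{t+2}}$; the within-block offsets, which sum to exactly $2^{t+1} - |w| + |w| = 2^{t+1}$ after accounting for the reversal, should collapse the expression to $0 \bmod 2^{t+2}$. Here the hypothesis $|w| \ge 2^t + 1$ is what forces $w$ to meet both halves of its $2^{t+1}$-aligned block, pinning down its position modulo $2^{t+2}$ via Lemma \ref{lem:uniquely_determined_position} and making the congruence well-defined rather than ambiguous.

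The main obstacle will be bookkeeping the parity of the block index and its interaction with the parity of $t$. Proposition \ref{prop:interesting_property} pairs the blocks differently depending on whether $t$ is odd or even (e.g. $\sigma^t(01)=\rho^t(01)^R$ for odd $t$ but $\sigma^t(01)=\rho^t(10)^R$ for even $t$), and since $\rho^t(00),\rho^t(11)$ occur at even positions while $\rho^t(01),\rho^t(10)$ occur at odd positions in the decomposition \eqref{eq:aligned_blocks}, the correspondence either preserves or flips the parity of the aligned-block index. I would handle this by verifying that the net effect on $n+m$ is a shift by a multiple of $2^{t+2}$ in every case. Concretely, I would fix the factorization $w = uv$ with $u$ a suffix and $v$ a prefix of consecutive $2^{t+1}$-aligned blocks (unique by Lemma \ref{lem:uniquely_determined_position}), compute the left endpoint of $w^R$ in the reversed picture, and check the congruence separately for the odd-$t$ and even-$t$ pairings. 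The cleanest route is probably to reduce to verifying \eqref{eq:reversed_subwords} for the boundary case where $w$ is exactly one aligned block (or a concatenation of two), where both $n$ and $m$ are multiples of $2^{t+1}$ and the reversal of the pairing is transparent, and then argue that shifting $w$ within its host block changes $n$ and $m$ by opposite amounts that preserve $n+m+|w| \bmod 2^{t+2}$.
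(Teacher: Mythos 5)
Your proposal is correct and follows essentially the same route as the paper: both rest on the reversal correspondence of Proposition \ref{prop:interesting_property} between $2^{t+1}$-aligned blocks of $\mathbf{r}$ and $\mathbf{s}$, compute $n+m+|w| = 2^{t+1}(i+j+1)$ for the paired occurrences via the within-block offset arithmetic, and invoke Lemma \ref{lem:uniquely_determined_position} to transfer the congruence to arbitrary occurrences. The one step you defer --- checking that the block-index parity always flips, so that $i+j+1$ is even for both parities of $t$ --- is exactly the paper's one-line observation that the relations of Proposition \ref{prop:interesting_property} pair the block types occurring at even positions in one word with those occurring at odd positions in the other, and it does hold in every case.
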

\begin{proof}
First, if $w \in \Sub(\mathbf{r})$, then it is a subword of some $2^{t+1}$-aligned block in $\mathbf{r}$. Proposition \ref{prop:interesting_property} shows that $2^{t+1}$-aligned blocks in $\mathbf{s}$ are precisely reversals of $2^{t+1}$-aligned blocks, hence $w^R \in \Sub(\mathbf{r})$. The converse is proved in the same fashion.

We proceed to the second part of the statement. Write $\mathbf{r}$ and $\mathbf{s}$ as the concatenation of $2^{t+1}$-aligned blocks:
\begin{align*}
\mathbf{r} &= R_0 R_1 R_2 \cdots,  \\
\mathbf{s} &= S_0 S_1 S_2 \cdots. 
\end{align*}
Let $w = r_n r_{n+1} \cdots r_{n+|w|-1} \in \Sub(R_i)$ for some $i \in \N$.
Write $n = 2^{t+1} i +l$, where $l < 2^{t+1}$ is the position of the initial letter of $w$ in $R_i$ (counting from $0$). For some $j \in \N$ we have $S_j = R_i^R$, and thus $w^R \in \Sub(S_j)$. More precisely, we have $w^R = s_m s_{m+1} \cdots s_{m+|w|-1}$, where $m = 2^{t+1} (j+1) - l - |w|$. The relations of Proposition \ref{prop:interesting_property} imply that $i,j$ have different parity, which gives the congruence \eqref{eq:reversed_subwords} for these particular occurrences of $w$ and $w^R$. But Lemma \ref{lem:uniquely_determined_position} implies that the positions at which $w$ and $w^R$ appear in $\mathbf{r}$ and $\mathbf{s}$, respectively, are unique modulo $2^{t+2}$, and therefore \eqref{eq:reversed_subwords} holds in general.
\end{proof}

The final auxiliary result shows in particular that $\mathbf{r}$ and $\mathbf{s}$ have only finitely many common subwords.

\begin{lem} \label{lem:common_subwords}
The following conditions are equivalent:
\begin{enumerate}[label={\textup{(\roman*)}}]
\item $w \in \Sub(\mathbf{r}) \cap \Sub(\mathbf{s})$;
\item $w \in \Sub(\mathbf{r})$ and $w^R \in \Sub(\mathbf{r})$.
\end{enumerate}
Moreover, if any of the above holds, then $|w| \leq 14$.
\end{lem}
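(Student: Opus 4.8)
The equivalence of (i) and (ii) is immediate from Lemma~\ref{lem:reversed_subwords}. Applying that lemma to the word $w^R$ in place of $w$ and using $(w^R)^R = w$, we obtain that $w^R \in \Sub(\mathbf{r})$ if and only if $w \in \Sub(\mathbf{s})$. Substituting this into condition (i), which reads $w \in \Sub(\mathbf{r})$ and $w \in \Sub(\mathbf{s})$, turns the second conjunct into $w^R \in \Sub(\mathbf{r})$ and yields exactly condition (ii). So no real work is needed for this part.

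For the length bound the plan is to reduce the infinite problem to a single finite verification. First I would record that any subword of a common subword is again a common subword: if $w \in \Sub(\mathbf{r}) \cap \Sub(\mathbf{s})$ with $|w| \geq 15$, then the prefix $w'$ of $w$ of length $15$ satisfies $w' \in \Sub(\mathbf{r}) \cap \Sub(\mathbf{s})$ as well, being a subword of $w$. Hence it suffices to prove that $\mathbf{r}$ and $\mathbf{s}$ share no common subword of length exactly $15$; this already forces every common subword to have length at most $14$.

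To rule out a common subword of length $15$ I would invoke Lemma~\ref{lem:appearance_of_subwords} with $t = 4$, which is legitimate since $15 \leq 2^4 + 1$. The lemma guarantees that every subword of $\mathbf{r}$ of length $15$ already occurs in the prefix of $\mathbf{r}$ of length $7 \cdot 2^5 = 224$, and likewise every subword of $\mathbf{s}$ of length $15$ occurs in the prefix of $\mathbf{s}$ of the same length. Thus the sets of length-$15$ subwords of $\mathbf{r}$ and of $\mathbf{s}$ are finite and can be read off from these two bounded prefixes, and a direct (computer-assisted) comparison shows that they are disjoint. Combined with the reduction above, this gives $|w| \leq 14$, completing the proof; exhibiting a single common subword of length $14$ shows the bound is sharp, though this is not required by the statement.

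I expect the only genuine obstacle to be organizing the argument so that it legitimately bypasses the block-straddling issues. A purely structural induction on the substitution (in the spirit of the proof of Lemma~\ref{lem:uniquely_determined_position}) would have to track how $w$ and $w^R$ sit across the midpoints of $2^{t+1}$-aligned blocks, which is delicate because a short word need not lie inside a single aligned block. Routing everything through Lemma~\ref{lem:appearance_of_subwords} avoids this entirely, since that lemma already packages the self-similarity into the assertion that all sufficiently short subwords appear in a fixed finite prefix, leaving only a finite check to perform.
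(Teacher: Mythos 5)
Your proof is correct and takes essentially the same route as the paper: the equivalence of (i) and (ii) follows from Lemma~\ref{lem:reversed_subwords} exactly as you describe, and the bound $|w| \leq 14$ is reduced via Lemma~\ref{lem:appearance_of_subwords} (with $t=4$) to a finite computer check on prefixes of length $7 \cdot 2^5$. The only cosmetic difference is that the paper runs the finite check through condition (ii), searching the prefix of $\mathbf{r}$ for length-$15$ subwords $w$ with $w^R \in \Sub(\mathbf{r})$, whereas you compare the length-$15$ subword sets of the prefixes of $\mathbf{r}$ and $\mathbf{s}$; these verifications are interchangeable given the equivalence you established first.
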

\begin{proof}
Due to Lemma \ref{lem:reversed_subwords} the conditions (i) and (ii) are equivalent. By Lemma \ref{lem:appearance_of_subwords} any $w \in \Sub(\mathbf{r})$ of length $|w|=15$ appears in the prefix of $\mathbf{r}$ of length $7 \cdot 2^5$. A computer search of this prefix shows that no such $w$ satisfies $w^R \in \Sub(\mathbf{r})$.
\end{proof}

We are now ready to prove Theorems \ref{thm:main2} and \ref{thm:main3}.

\begin{proof}[Proof of Theorem \ref{thm:main2}]
Let $d = 2^k+1$
We begin with the case $k \leq 3$, together with $k=4,5$, where we verify the assertion by direct calculation. Consider for example $k=3$. One can check that a monochromatic arithmetic progression of difference $9$ and length $9$ starts with $r_{28}$. Now, any monochromatic arithmetic progression of difference $9$ and length $10$ would have to be contained in a subword of $\mathbf{r}$ of length $(10-1) \cdot 9 + 1 = 82 \leq 2^7+1$. By Lemma \ref{lem:appearance_of_subwords} each such subword appears in the prefix of $\mathbf{r}$ of length $7 \cdot 2^{8}$. But a direct search shows that there is no such progression of length $10$ among these initial terms, and hence neither in $\mathbf{r}$.

The other cases with $k \leq 5$ can be proved in the same fashion. In Table \ref{tab:progressions1} below we provide the initial term of a monochromatic arithmetic progression of postulated length $A_{\mathbf{r}}(d)$ and the length of a prefix of $\mathbf{r}$, where a longer progression would have to appear (which is not the case). 

\begin{table}[h] 
\begin{tabular}{ccccc}
$k$ & $d$  & $A_{\mathbf{r}}(2^k+1)$ & initial term & prefix length \\ \hline
$1$ & $3$  & $5$    & $28$     & $7 \cdot 2^5$         \\
$2$ & $5$  & $6$    & $31$     & $7 \cdot 2^6$         \\
$3$ & $9$  & $9$    & $43$     & $7 \cdot 2^{8}$      \\
$4$ & $17$ & $10$   & $495$    & $7 \cdot 2^{9}$      \\
$5$ & $33$ & $18$   & $980$    & $7 \cdot 2^{11}$     
\end{tabular} 
\caption{\label{tab:progressions1}The location of monochromatic arithmetic progressions of length $A_{\mathbf{r}}(2^k+1)$ for $k \leq 5$}
\end{table}

We now move on to the general case $k \geq 6$. We first exhibit a monochromatic arithmetic progression of postulated length $A_{\mathbf{r}}(d) = 2^{k-1}+2$. Let $n = 2^{2k+1}- 2^k-1 =[1^{k} 0 1^{k}]_2$ so that $r_n =  0$. Then $n+d = 2^{2k+1} = [1 0^{2k+1}]_2$, thus also $r_{n+d} =  0$. Furthermore, for all $i=0,1,\ldots, 2^{k-1} $ we can write $n + (i+1)d = 2^{2k+1} + 2^k i  + i$ and it is easy to see that $e_{11}(n + (i+1)d) = 2 e_{11}(i) \equiv 0 \pmod{2}$. Therefore, the $2^{k-1} + 2$ terms $n, n+d, \ldots, n+ (2^{k-1}+1)d$ 
constitute a monochromatic arithmetic progression in $\mathbf{r}$. Note that it cannot be prolonged in any direction, as the binary expansions of the numbers
\begin{align*}
n-d &= 2^{2k+1}- 2^{k+1}- 2 =[1^{k-1} 0 1^{k} 0]_2, \\
n+ (2^{k-1}+2)d &= 2^{2k+1} + 2^{2k-1} + 2^k + 2^{k-1} + 1 = [101 0^{k-2} 11 0^{k-2} 1]_2
\end{align*}
both contain an odd number of occurrences of $11$.

We proceed to show that $A_{\mathbf{r}}(d) \leq 2^{k-1} + 2$ in general by giving a bound on $A_{\mathbf{r}}(n,d)$ for each $n$. By Lemma \ref{lem:negation} it is sufficient to consider only arithmetic progressions of ``color'' $0$. Thus, assume that $n \in \N$ is such that $r_{n} = 0$. As at the beginning of this section, write 
$$n = 2^k m + l,$$
where $m \in \N$ and $0 \leq l < 2^k$. We are going consider four cases, depending on which quarter of the interval $[0, 2^k)$ the number $l$ belongs to. 

\textbf{Case I:} $0 \leq l < 2^{k-2}$

For  $i=0,1,\ldots,2^{k-2}$  the blocks $(m+i)_2$ and $(l+i)_2$ in the binary expansion of $(n+i)_2$ are broken up by at least one zero, so we have
$$ r_{n+id} \equiv r_{m+i} + r_{l+i} \pmod{2}. $$
If $A(n,d) < 2^{k-2} + 1$, there is nothing to prove.  Otherwise, we get $r_{k+i} = r_{l+i}$ for $i=0,1,\ldots,2^{k-2}$. The word $r_l r_{l+1} \cdots r_{l+2^{k-2}}$ is a subword of length $2^{k-2}+1$ of the $2^{k-1}$-aligned block $r_0 r_{1} \cdots r_{2^{k-1}-1}$ so Lemma \ref{lem:uniquely_determined_position} implies $m \equiv l \pmod{2^k}$. Letting $m = 2^k j + l$, we get $n = 2^{2k} j + (2^k+1)l = 2^{2k} j + ld$.  Comparing the binary expansions 
\begin{align*}
(n + (2^{k-1}-l)d)_2 &= (j)_2 10^{k-1} 1 0^{k-1}, \\
(n + (2^{k-1}-l+1)d)_2 &= (j)_2 10^{k-2}1 1 0^{k-2} 1,
\end{align*}
we see that the latter contains exactly one more occurrence of the pattern $11$, regardless of $j$. Hence, the monochromatic arithmetic progression cannot be prolonged beyond $n + (2^{k-1}-l)d \leq n + 2^{k-1}d$, and therefore $A(n,d) \leq 2^{k-1} + 1$.

\textbf{Case II:} $2^{k-2} \leq l < 2^{k-1}$

Put $n' = n+(2^{k-1} - l)d =  2^k m' + 2^{k-1}$, where $m' = m+2^{k-1} - l+1$. For $i=0,1,\ldots, 2^{k-1} - 1$ the block $(m'+i)_2$ in the binary expansion of $(n'+id)_2$ is directly followed by a $1$, so we have
$$r_{n'+id} \equiv r_{2(m'+i)+1} + r_{2^{k-1}+i}\pmod{2}.$$
It follows that $r_{n'+id} = 0$ as long as $r_{2(k'+i)+1} = r_{2^{k-1}+i}$. By Lemma \ref{lem:common_subwords} the former equality does not occur for some $i \leq 14$. Hence, by the assumption $k \geq 6$ we get  
$$A(n,d) \leq  \frac{n'-n}{d} + A(n',d) \leq  2^{k-2} + 14 < 2^{k-1}.$$

\textbf{Case III:} $2^{k-1} \leq l < 3 \cdot 2^{k-2}$

For at least $i =0, 1, \ldots, 2^{k-2}$ we have 
$$r_{n+id} \equiv r_{2(m+i)+1} + r_{l+i} \pmod{2}.$$ 
Again, due to Lemma \ref{lem:common_subwords} the right hand-side cannot be constantly equal to $0$ for all  $i=0,1,\ldots,14$. Therefore, $A(n,d) \leq 14 < 2^{k-1}$.

\textbf{Case IV:} $3 \cdot 2^{k-2} \leq l < 2^{k}$

Put $n' = n+(2^{k} - l)d =  2^k m'$, where $m' = 2^{k} - l + 1$. If we assume that $A(n,d) \geq  2^{k-1} + 1$, then $A(n',d) \geq 2^{k-2} + 1$. Arguing precisely as in case I (where $n$ is replaced by $n'$), we obtain $m' \equiv 0 \pmod{2^k}$. Hence, we can write $m' = 2^{k} j$, so that $n' = 2^{2k} j$. Consider the binary expansions:
\begin{align*}
(n' - d)_2 &= (j-1)_2 1^{k-1} 0 1^k, \\
(n' - 2d)_2 &= (j-1)_2 1^{k-2} 0 1^k 0.
\end{align*}
The former expansion contains exactly one more occurrence of the pattern $11$, which implies $r_{n'-d} \neq r_{n'-2d}$. In view of our assumption $r_n = r_{n+d} = \cdots = r_{n'} = \cdots$, the only possibility is that $n = n'-d$. But from case I we already know that $A(n',d) \leq 2^{k-1} + 1$ so $A(n,d) \leq 2^{k-1} + 2$, as claimed.
\end{proof}

\begin{proof}[Proof of Theorem \ref{thm:main3}]
For $k \leq 5$ the claim can be verified numerically, in the same way as in the previous proof. In Table \ref{tab:progressions2} below we provide the initial term of a monochromatic arithmetic progression of postulated length $A_{\mathbf{r}}(d)$ and the length of a prefix of $\mathbf{r}$ which is sufficient to be checked for the existence of a longer monochromatic progression.
\begin{table}[h] 
\begin{tabular}{ccccc}
$k$ & $d$  & $A_{\mathbf{r}}(d)$ & initial term & prefix length \\ \hline
$1$ & $1$  & $4$    & $7$      & $7 \cdot 2^3$  \\
$2$ & $3$  & $5$    & $28$     & $7 \cdot 2^5$  \\
$3$ & $7$  & $9$    & $95$     & $7 \cdot 2^7$  \\
$4$ & $15$ & $10$   & $39$     & $7 \cdot 2^9$  \\
$5$ & $31$ & $19$   & $32$     & $7 \cdot 2^{11}$       
\end{tabular}
\caption{\label{tab:progressions2}The location of monochromatic arithmetic progressions of length $A_{\mathbf{r}}(2^k-1)$ for $k \leq 5$}
\end{table}

In what follows we assume that $k \geq 6$. We now show the existence of a monochromatic arithmetic progression of difference $d$ and desired length $A_{\mathbf{r}}(d)$ in the general case $k \geq 5$. Regardless of the parity of $k$, let $n = 2^{2k}+d = [1 0^{k} 1^k]$. For each $i=0,1, \ldots, 2^{k-1}-1$ we can write $(n+id)_2 = 1 0^q ((i+1)d)_2$ for some $q \geq 1$ (depending on $i$), which implies $r_{n + id} = r_{(i+1)d}$. Proposition \ref{prop:special_case_1} assures that the terms $n, n+d, \cdots, n + (2^{k-1}-1)d$ form a monochromatic arithmetic progression of ``color'' $(k-1) \bmod{2}$. Because $n+ 2^{k-1}d = 2^{2k} + 2^{2k-1} + 2^{k-1} - 1 = [11 0^k 1^{k-1}]_2$, we also have $e_{11}(n+ 2^{k-1}d) = k-1$ and the progression can be prolonged by yet another term. This proves that $A_{\mathbf{r}}(d) \geq 2^{k-1} + 1$. Moreover, if $k$ is odd, we obtain a stronger inequality $A_{\mathbf{r}}(d) \geq 2^{k-1} + 3$, as the monochromatic progression can be extended backwards, namely to $n-d = 2^{2k} = [10^{2k}]_2$ and $n-2d = 2^{2k} - 2^k +1 = [1^k 0^{k-1} 1]_2$.

We now prove that there do not exist longer monochromatic progressions of difference $d$. Just like in the proof of Theorem \ref{thm:main2}, we can restrict our attention to progressions of ``color'' $0$. Let $n \in \N$ be such that $r_n = 0$ and write $n = 2^k m + l$, where $m \in \N$ and $0 \leq l < 2^k$. Again, we consider four cases.

\textbf{Case I:} $0 \leq l < 2^{k-2}$

We argue that the inequality $A_{\mathbf{r}}(n,d) \geq 2^{k-1} + 2$ implies that $k$ is odd and also that $A_{\mathbf{r}}(n,d) \leq 2^{k-1} + 3$, which is sufficient to obtain the result.
Put $n' = n+(l+1)d = 2^k m' + (2^k-1)$, where $m' = m+l$. By our assumption we get $A(n',d) \geq  2^{k-2} + 1$, and consequently for at least $i = 0,1,\ldots, 2^{k-2}$ we have 
$$ 0 = r_{n'+id} \equiv r_{2(m'+i)+1} + r_{2^k-1 - i} \pmod{2}. $$
By Lemma \ref{lem:reversed_subwords} we must have 
$$ 0 \equiv m' + (2^k-1-2^{k-2}) + (2^{k-2}+1) \equiv m' \pmod{2^{k}}.$$
Letting $m' = 2^k j,$ we obtain $n' = 2^{2k} j + 2^k-1$ and it is easy to check that $k$ must be odd, as otherwise we would have $r_{n'} \neq r_{n'-d}$. 

At the same time, the number of occurrences of the pattern $11$ in the expansions
\begin{align*}
(n' + 2^{k-1}d)_2 &= (j)_2 1 0^{k} 1^{k-1}, \\
(n' + (2^{k-1}+1)d)_2 &= (j)_2 1 0^{k-2} 10 1^{k-2} 0,
\end{align*}
differs by $1$, so the monochromatic progression cannot exceed $n' + 2^{k-1}d$. Therefore, we obtain $A_{\mathbf{r}}(n',d) \leq 2^{k-1} + 1$.  It remains to show $n \in \{n'-d, n'-2d\}$, as then $A_{\mathbf{r}}(n,d) \leq 2 + A_{\mathbf{r}}(n',d) \leq 2^{k-1} + 3$. 

If $n=0$, then by definition we have $n = n'-d$. If $n > 0$, we have $j > 0$ so the indices $n' - 2d$ and $n' - 3d$ are positive. By analyzing the binary expansions 
\begin{align*}
(n' - 2d)_2 &= (j-1)_2 1^k 0^{k-1} 1, \\
(n' - 3d)_2 &= (j-1)_2 1^{k-1} 0^{k-1} 1 0,
\end{align*}
we deduce that $r_{n' - 2d} \neq r_{n' - 3d}$. It follows that $n \in \{n'-d, n'-2d\}$, or else the considered arithmetic progression would not be monochromatic.

\textbf{Case II:} $2^{k-2} \leq l < 2^{k-1}$

For at least $i = 0,1,\ldots, 2^{k-2}$ we have 
$$
r_{n+id} \equiv r_{k+i} + r_{l - i} \pmod{2}.
$$
As long as $r_{n+id}=0$, we obtain $r_{k+i} = r_{l - i}$ and Lemma \ref{lem:common_subwords} implies that $A_{\mathbf{r}}(n,d) \leq 14 < 2^{k-1}$.

\textbf{Case III:} $2^{k-1} \leq l < 3 \cdot 2^{k-2}$

The number $n' = n + (l-2^{k-1}+1)d = 2^k(m+l-2^{k-1}+1) + 2^{k-1} - 1$ falls under case II. As a consequence, we get 
$$A_{\mathbf{r}}(n,d) \leq l - 2^{k-1} + 1 + A_{\mathbf{r}}(n',d) \leq 2^{k-2} + 14 < 2^{k-1}.$$

\textbf{Case IV:} $3 \cdot 2^{k-2} \leq l < 2^{k}$

For at least $i = 0,1,\ldots, 2^{k-2}$ we have 
$$
r_{n+id} \equiv r_{2(m+i)+1} + r_{l - i} \pmod{2}.
$$
If we assume that $A_{\mathbf{r}}(n,d) \geq 2^{k-2}+1$, then Lemma \ref{lem:reversed_subwords} implies that we must have $m + l \equiv -1 \pmod{2^{k}}.$ Letting $m = 2^k j -l -1,$ we obtain $n = 2^{2k} j -2^k - (2^k-1)l = 2^{2k}j  - (l+1)d -1$. But then we get
\begin{align*}
(n +(l+1-2^{k-1})d)_2 &= (j-1)_2 1 0^{k} 1^{k-1}, \\
(n +(l+2-2^{k-1})d)_2 &= (j-1)_2 1 0^{k-2} 10 1^{k-2} 0,
\end{align*}
and the former expansion contains exactly one more occurrence of $11$. It follows that 
$$A_{\mathbf{r}}(n,d) \leq l+2-2^{k-1} \leq 2^k-1+2-2^{k-1} = 2^{k-1}+1. \qedhere$$
\end{proof}

\section{Some results for a general pattern} \label{sec:general_v}

In this section we investigate the behavior of the values $A_v(0,d)$ and $A_v(d)$ for a general pattern $v \in \{0,1\}^+$. Our first goal is to prove Theorem \ref{thm:general_bound}, which provides a universal upper bound on $A_v(0,d)$ and $A_v(d)$ when $|v| \geq 2$. In order to obtain this result we will need a technical lemma.

\begin{lem} \label{lem:prefix}
Let $x,y,v \in \{0,1\}^+$ be such that $|v| \geq 2$ and the initial digit of $x$ is different from the initial digit of $y$. Then there exists a word $w \in \{0,1\}^{|v|-1}$ such that $|wx|_{v} \not \equiv |wy|_{v} \pmod{2}$.
\end{lem}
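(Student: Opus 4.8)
The plan is to reduce the whole statement to a comparison between two carefully chosen candidates for $w$ that differ only in their first letter. Write $m = |v|$ and $v = v_1 v_2 \cdots v_m$, and set $w_0 = v_1 v_2 \cdots v_{m-1}$ (the length-$(m-1)$ prefix of $v$) together with $w_1 = \overline{v_1}\, v_2 \cdots v_{m-1}$, obtained from $w_0$ by flipping its first letter. Both have length $|v|-1$, and they are distinct because $m \geq 2$. I would prove the single congruence
\[ |w_0 x|_v + |w_1 x|_v + |w_0 y|_v + |w_1 y|_v \equiv 1 \pmod{2}, \]
from which the lemma is immediate: regrouping the four terms as $(|w_0 x|_v + |w_0 y|_v) + (|w_1 x|_v + |w_1 y|_v)$, at least one bracketed sum is odd, and the corresponding $w \in \{w_0, w_1\}$ satisfies $|wx|_v \not\equiv |wy|_v \pmod 2$.

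The heart of the argument is understanding how flipping the first letter of a word of length $|v|-1$ affects the occurrence count. For any nonempty word $u$ I claim that $|w_0 u|_v - |w_1 u|_v$ equals $1$ if the first letter of $u$ is $v_m$ and $0$ otherwise. Indeed, any occurrence of $v$ in $w_j u$ that uses the first position must begin at that position, since there is no earlier starting point; and every occurrence that avoids the first position is unaffected by the flip, because $w_0$ and $w_1$ agree on all remaining positions and $u$ is common to both. An occurrence beginning at the first position reads off precisely $w_j$ followed by the first letter of $u$ (here I use $|w_j| = m-1$, so this block has length $m$), and so equals $v$ exactly when $w_j = v_1 \cdots v_{m-1}$ and the first letter of $u$ is $v_m$. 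For $w_0$ the first condition holds and for $w_1$ it fails, which gives the claimed difference.

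To conclude, I would apply the claim to $u = x$ and $u = y$ and add the results, so that $(|w_0 x|_v - |w_1 x|_v) + (|w_0 y|_v - |w_1 y|_v)$ counts how many of $x, y$ begin with the letter $v_m$. Since the initial letters of $x$ and $y$ differ by hypothesis, exactly one of them equals $v_m$, so this count is $1$; reducing modulo $2$ and using $-1 \equiv 1$ yields the displayed congruence of the first paragraph, which finishes the proof.

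I do not anticipate a serious difficulty, but the step requiring genuine care is the localization claim of the second paragraph, namely that altering the first letter of $w$ disturbs only the occurrence of $v$ starting at the leftmost position. This is exactly where the hypotheses enter: one needs $|v| \geq 2$ so that $w$ has a first letter to flip, and the precise length $|w| = |v|-1$ so that $w$ is too short to contain a copy of $v$ internally and the sole boundary-crossing occurrence uses all of $w$ plus a single letter of $u$. The statement genuinely fails for $|v| = 1$ (e.g.\ $v = 1$, $x = 10$, $y = 01$), which confirms that these hypotheses are essential.
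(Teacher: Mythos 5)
Your proof is correct and complete, and it takes a genuinely different route from the paper's. The paper argues by cases on whether $|x|_{v} \equiv |y|_{v} \pmod{2}$: in the easy case it takes $w = 0^{|v|-1}$ or $1^{|v|-1}$ so that $\Pref(v) \cap \Suf(w) = \varnothing$ and prepending $w$ changes neither count; in the hard case it extracts the longest proper suffix $s$ of $v$ that is a prefix of (exactly one of) $x,y$, writes $v = ps$, and sets $w = zp$, where the existence of a $z$ making $p$ the longest element of $\Pref(v) \cap \Suf(w)$ is established by a counting argument, so that $|wx|_v = |x|_v + 1$ while $|wy|_v = |y|_v$. Your argument replaces all of this with a parity average over the pair $w_0 = v_1\cdots v_{m-1}$ and $w_1 = \overline{v_1}v_2\cdots v_{m-1}$: your localization claim (correct as stated, since an occurrence covering the first position must start there, and with $|w_j| = |v|-1$ that occurrence reads exactly $w_j$ plus the first letter of the appended word) shows $|w_0 u|_v - |w_1 u|_v = 1$ or $0$ according to whether $u$ begins with $v_m$, and the hypothesis that $x$ and $y$ begin differently forces the four-term sum to be odd. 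This sidesteps the paper's case distinction, the extremal choice of $s$, and the counting argument for $z$, at the cost of being non-constructive about which of $w_0, w_1$ works and forgoing the paper's slightly finer conclusion (a $w$ for which the counts differ by exactly one) — information the paper never uses, since Lemma 5.1 is invoked in the proof of Theorem \ref{thm:general_bound} purely as an existence statement. Your closing remark that the lemma fails for $|v| = 1$ (e.g.\ $v=1$, $x=10$, $y=01$) correctly identifies why both hypotheses are essential, and matches the paper's remark that the bound fails in the Thue--Morse case.
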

\begin{proof}
If $|x|_{v} \not\equiv |y|_{v} \pmod{2}$, it is sufficient to take any $w$ such that $\Pref(v) \cap \Suf(w) = \varnothing$.
 For example, we can choose $w = 0^{|v|-1}$ if $v$ begins with $1$, and otherwise $w = 1^{|v|-1}$. Clearly, $|wx|_{v} = |x|_{v}$ and $|wy|_{v} = |y|_{v}$ so our assertion holds.

The case $|x|_{v} \equiv |y|_{v} \pmod{2}$ is a bit more complicated. Let $s$ be the longest proper suffix of $v$ which is simultaneously a prefix of one of $x,y$. Since $x,y$ begin with distinct digits, $s$ is nonempty and is a prefix of precisely one of these words. Without loss of generality assume that this is the case for $x$. Let $p$ be such that $v = ps$ and note that $|p| \leq |v|-1$.  We put $w = zp$ for some $z \in \{0,1\}^{|v|-1-|p|}$ (to be determined) and make the following claims, from which the result will follow:
\begin{itemize}
\item we can choose $z$ in such a way that $p$ is the longest word in $\Pref(v) \cap \Suf(w)$;
\item for such choice of $z$ we have $|wx|_{v} = |x|_{v} + 1$ and $|wy|_{v} = |y|_{v}$.
\end{itemize}

Let $P'$ denote the set of these $p' \in \Pref(v)$ such that $p$ is a proper suffix of $p'$. In order for the first claim to hold we need to ensure that $w=zp$ is simultaneously not in the form $z'p'$, where $p'\in P'$. This is vacuously true for any $z$ if $p'=\varnothing$. Otherwise, for each $p' \in P'$ there are precisely $2^{|v|-1-|p'|}$ ways to choose $z'$ (and thus also $z$) so that $p' \in \Pref(v) \cap \Suf(w)$. Hence, the number of ways to choose $z$ so that the first claim holds is
$$ 2^{|v|-1-|p|} - \sum_{p' \in P'} 2^{|v|-1-|p'|} \geq 2^{|v|-1-|p|} - \sum_{l=|p|+1}^{|v|-1} 2^{|v|-1-l}  \geq 1.$$ 

We move on to the second claim. Clearly, we have an occurrence of $v$ in $wx$ which does not already appear in $x$, and hence $|wx|_v \geq |x|_v +1  $. Conversely, for any such occurrence we have $v = p's'$ for some nonempty $p', s'$, where $p' \in \Suf(w)$ and $s' \in \Pref(x)$. We have $|p'| \leq |p|$ by the first claim and $|s'| \leq |s|$ by the initial assumption about $s$. Hence, $p'=p$, $s'=s$, which implies $|wx|_v = |x|_v + 1$. Along the same lines, if $wy$ were to contain an additional occurrence of $v$, then again $v = p's'$, where $p' \in \Suf(w)$ and $s' \in \Pref(y)$ are proper. Similarly, we get $|p'| \leq |p|$ so $|s'| \geq |s|$. But the last inequality is a contradiction due to the choice if $s$.
\end{proof}

\begin{proof}[Proof of Theorem \ref{thm:general_bound}]
It suffices to show that for any $d < 2^k$ and $n \in \N$ the postulated inequality holds with $A_v(d)$ replaced by $A_v(n,d)$. Write $d = 2^{\nu_2(d)} d'$ and let $z \in \{0,1\}^{\nu_2(d)}$ be such that $[z]_2 = n \bmod{2^{\nu_2(d)}}$ (we have $z = \epsilon$ if $\nu_2(d)=0$).

 We first deal with the special case when $v = 0^i$ for some $i \geq 2$ and $n < 2^{\nu_2(d)}$, i.e., $n=[z]_2$. Then we get
$$e_v(2^{i}d + n) = |(d')_2 0^i z|_v = 1+ |(d')_2 0^{i-1} z|_v = 1+ e_v(2^{i-1}d + n),   $$
which implies $A_v(n,d) \leq 2^{|v|}\leq 2^{k+ |v| - \nu_2(d)-1}$, as desired.
 
Hence, assume that $v$ contains a $1$ or $n \geq 2^{\nu_2(d)}$ (or both). Let $w \in \{0,1\}^{|v|-1}$ be a word obtained from Lemma \ref{lem:prefix} applied to $x = 0^{\ell(d')} z$ and $y = (d')_2 z$. We can find an integer $j \in \{0,1,\ldots, 2^{|v|+\ell(d')-1} - 1 \}$ such that $(n+jd)_2 = uwx$ for some $u \in \{0,1\}^*$. Consequently, we also have $(n+(j+1)d)_2 = uwy$. If $v$ begins with precisely $i \geq 0$ zeros and contains a $1$, by Lemma \ref{lem:prefix} we get the non-congruence
$$ e_{v}(n+(j+1)d) \equiv |0^i u w|_{v} + |wy|_v \not\equiv |0^i uw|_{v} + |wx|_v \equiv  e_{v}(n+jd) \pmod{2}.   $$
If $v=0^i$ for some $i \geq 2$, then the assumption $n \geq 2^{\nu_2(d)}$ guarantees that $uw$ contains a $1$, and we again obtain $e_{v}(n+(j+1)d)  \not \equiv e_{v}(n+jd) \pmod{2}$ by a similar computation (with the prefix $0^i$ omitted).
In either case, we get
$$A_v(n,d) \leq j + 1 \leq 2^{|v| +  \ell(d) - \nu_2(d)-1} \leq 2^{k+ |v| - \nu_2(d)-1},$$
and the result follows.
\end{proof}

As an immediate corollary of the theorem, we obtain that $A_v(d)$ (and consequently $A_v(0,d)$) is bounded by a linear function in $d$.

\begin{cor} \label{cor:general_bound}
For every pattern $v$ of length $|v| \geq 2$ and all integers $d \geq 1$ we have
$$ A_v(0,d) \leq  A_v(d) \leq 2^{|v|-\nu_2(d)} d.  $$
\end{cor}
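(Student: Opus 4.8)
The plan is to derive the corollary directly from Theorem~\ref{thm:general_bound} by choosing the correct value of $k$ for a given $d$. First I would observe that the corollary is simply the theorem specialized so that the crude upper bound $2^{k+|v|-\nu_2(d)-1}$ is expressed in terms of $d$ itself rather than in terms of $k$. The natural choice is to take $k = \ell(d) = 1 + \lfloor \log_2 d \rfloor$, so that $d < 2^k$ and the hypothesis of the theorem is satisfied.

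With this choice, Theorem~\ref{thm:general_bound} yields
$$ A_v(0,d) \leq A_v(d) \leq 2^{\ell(d)+|v|-\nu_2(d)-1}, $$
where the first inequality is immediate from the definition $A_v(d) = \sup_n A_v(n,d) \geq A_v(0,d)$. It then remains to compare the right-hand side with the claimed bound $2^{|v|-\nu_2(d)} d$. Since $\ell(d) = 1 + \lfloor \log_2 d \rfloor \leq 1 + \log_2 d$, we have $2^{\ell(d)-1} \leq 2^{\log_2 d} = d$, and multiplying through by the factor $2^{|v|-\nu_2(d)}$ gives exactly $2^{\ell(d)+|v|-\nu_2(d)-1} \leq 2^{|v|-\nu_2(d)} d$, as desired.

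I do not anticipate any genuine obstacle here: the entire content is already packed into Theorem~\ref{thm:general_bound}, and the corollary only repackages it using the elementary estimate $2^{\ell(d)-1} \leq d$. The one point requiring a moment's care is verifying that $k = \ell(d)$ indeed satisfies $d < 2^k$, which holds because $\ell(d) = |(d)_2|$ is precisely the length of the binary expansion of $d$, so $2^{\ell(d)-1} \leq d < 2^{\ell(d)}$. Everything else is a one-line substitution and a logarithmic inequality.
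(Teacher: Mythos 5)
Your proof is correct and matches the paper's intent exactly: the paper leaves this as an ``immediate corollary'' of Theorem~\ref{thm:general_bound}, and indeed later (in Section~\ref{sec:calculation}) records the same intermediate bound $A_v(d) \leq 2^{\ell(d)+|v|-\nu_2(d)-1}$ that you obtain by taking $k = \ell(d)$. Combining it with $2^{\ell(d)-1} \leq d$ is precisely the one-line repackaging the author had in mind, so there is nothing to add.
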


The obtained inequalities are most likely far from optimal, which is discussed in more detail in the next section. 

Taking into account Theorem \ref{thm:Par17}, we have left to give an upper bound for $A_0(d)$. It turns out that we have the equality $A_{0}(d) = A_{\mathbf{t}}(d)$ for all $d \geq 1$, which immediately gives us a nice characterization by virtue of said theorem.

\begin{prop} \label{prop:A_0=A_t}
For all positive integers $d$ we have $A_{0}(d) = A_{\mathbf{t}}(d)$.
Therefore, for any positive integer $k$, we get
$$  \max_{1 \leq d \leq 2^k-2} A_{0}(d)  \leq  2^k$$
and
$$ \max_{1 \leq d < 2^k} A_0(d) = A_0(2^k-1) = \begin{cases}
2^k + 4, &\text{if } k \equiv 0 \pmod{2},\\
2^k &\text{otherwise}.
\end{cases}   $$
\end{prop}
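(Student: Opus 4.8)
The plan is to prove the key equality $A_0(d) = A_{\mathbf{t}}(d)$ and then invoke Theorem~\ref{thm:Par17} to read off the explicit values. The whole statement reduces to understanding how the pattern-counting function $e_0$ relates to the digit sum $s_2 = e_1$. The crucial observation is that, under our convention, $e_0(n)$ counts the zeros in $(n)_2$ \emph{with no leading zero prepended} (since $v=0$ begins with zero zeros followed by no nonzero digit---in fact $v=0$ is the degenerate all-zero case, handled by writing $e_0(n) = |(n)_2|_0$). Thus, if $(n)_2$ has length $\ell(n)$ and digit sum $s_2(n)$, then the number of zeros is $e_0(n) = \ell(n) - s_2(n)$.

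First I would make this relation precise and exploit it parity-wise. Writing $\ell = \ell(n)$, we have $e_0(n) \equiv \ell(n) + s_2(n) \pmod 2$, so $e_0(n) \equiv e_0(m) \pmod 2$ is equivalent to $s_2(n) + s_2(m) \equiv \ell(n) + \ell(m) \pmod 2$. The subtlety is the length term $\ell(n)$, which is \emph{not} invariant along an arithmetic progression $n, n+d, \ldots$. However, within any fixed progression $n+id$ the length $\ell(n+id)$ is eventually constant once the leading digit stabilizes, and more to the point it changes only finitely often; the idea is that the maximal monochromatic run is governed by the long stretches where $\ell$ is constant, on which $(e_0(n+id) \bmod 2)$ and $(t_{n+id})$ differ only by a global parity flip. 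So I would show that on any block of consecutive indices with constant binary length, the $e_0$-coloring and the Thue--Morse coloring agree up to a constant, hence have identical monochromatic-progression lengths on that block; maximizing over all $n$ then gives $A_0(d) = A_{\mathbf{t}}(d)$. Concretely, for a progression of difference $d$ realizing $A_{\mathbf{t}}(n,d)$ one can shift the starting point $n$ upward by a large multiple of a suitable power of two so that all terms of the progression share the same binary length, transferring the progression verbatim into the $e_0$-coloring (up to the constant parity shift, which does not affect length); and conversely.

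The main obstacle I anticipate is precisely this boundary effect from the length function $\ell$. I would handle it by noting that $\ell(n+id)$ is nondecreasing in $i$ and increases by at most $1$ each time it changes, and that between consecutive increases there are long constant stretches whose length grows without bound as $n \to \infty$. Since $A_{\mathbf{t}}(d)$ and $A_0(d)$ are both suprema over all starting points $n$, and since any finite monochromatic progression can be pushed into a region where $\ell$ is constant across all its terms (by replacing $n$ with $n + 2^M$ for large $M$, which prepends a fixed high-order block and leaves the relevant low-order digits governing parity untouched on $i=0,\dots,A_{\mathbf{t}}(n,d)$), the two suprema coincide. Care is needed to verify that such a shift preserves both $s_2$-parities \emph{and} the pattern of agreements, but this is a routine translation-invariance argument for the Thue--Morse structure.

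Having established $A_0(d) = A_{\mathbf{t}}(d)$, the remaining assertions are immediate: substituting into Theorem~\ref{thm:Par17} yields $\max_{1 \le d \le 2^k-2} A_0(d) \le 2^k$ and the stated formula for $A_0(2^k-1)$. The only extra point is that $A_0(2^k-1)$ realizes the maximum over the full range $1 \le d < 2^k$; this follows because the range $1 \le d \le 2^k-2$ is capped at $2^k$, while $A_0(2^k-1) = A_{\mathbf{t}}(2^k-1) \ge 2^k$ in both parity cases, so the maximum is attained at $d = 2^k-1$.
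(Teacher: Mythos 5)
Your skeleton is the same as the paper's: the identity $e_0(n)=\ell(n)-s_2(n)$, the observation that on a range of constant binary length the $e_0$-coloring is the Thue--Morse coloring up to a global flip, and the shift $n\mapsto 2^M+n$ to equalize lengths, followed by taking suprema over starting points. Your forward direction is complete: since $s_2(2^M+m)=1+s_2(m)$ for $m<2^M$, a monochromatic progression in $\mathbf{t}$ shifts to an $e_0$-monochromatic progression of the same length, giving $A_0(d)\geq A_{\mathbf{t}}(d)$; and your closing deduction of the displayed formulas from Theorem \ref{thm:Par17}, including attainment of the maximum at $d=2^k-1$, is fine and matches the paper.

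The converse direction, which you dispatch with ``and conversely'' plus a ``routine translation-invariance argument,'' has a genuine gap, and it is exactly the delicate step. The shift $n\mapsto 2^M+n$ prepends $1\,0^{\,M-\ell(n+id)}$ to $(n+id)_2$, so it changes $e_0(n+id)$ by $M-\ell(n+id)$, whose parity is \emph{not} constant once the progression crosses a power of two; equivalently $e_0(2^M+m)=M-s_2(m)$, so the $e_0$-colors of the shifted progression reproduce the Thue--Morse colors of the original progression, not its $e_0$-colors. Concretely, for $d=1$ the pair $3,4$ is $e_0$-monochromatic ($e_0=0,2$), yet $e_0(2^M+3)=M-2$ and $e_0(2^M+4)=M-1$ have opposite parities, so the shifted progression is no longer $e_0$-monochromatic --- the shift preserves $s_2$-parities but not $e_0$-parities, which is the opposite of what your converse needs. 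Thus an $e_0$-monochromatic progression straddling a length jump cannot be ``pushed into a region where $\ell$ is constant,'' and for such progressions you have no bound by $A_{\mathbf{t}}(d)$: within each interval $[2^j,2^{j+1})$ the run is Thue--Morse-monochromatic (with the color flipping according to the parity of the length jump at each boundary), but a priori a concatenation of several such pieces could exceed every single Thue--Morse run. You are in good company: the paper compresses this same step into the one-line claim $A_0(2^l+n,d)=A_0(n,d)$, which as stated also requires $\ell(n+id)$ to have constant parity along the run (compare $A_0(3,1)=2$ with $A_0(2^4+3,1)=A_0(19,1)=1$, where $l=4$ meets the paper's hypothesis on $l$). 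As written, your proposal establishes $A_0(d)\geq A_{\mathbf{t}}(d)$ in full, and the reverse inequality only for progressions contained in a single dyadic block; the straddling case is the missing idea in both your sketch and the step you would need to make rigorous.
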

\begin{proof}
Fix $d \geq 1$ and $n \in \N$. Observe that the value $A_0(n,d)$ is finite, since 
$$ e_{0}(2^{\ell(n)+1}d+n) = 1+ e_{0}(2^{\ell(n)}d+ n).$$
For any integer $l \geq \ell(d) + \ell(n+d A_{0}(n,d))$ we also get
$$ A_0(2^l +n,d) = A_0(n,d).  $$
At the same time, for each $i = 0,1, \ldots, A_{0}(d)-1$, we have the equality
$$ l+1 = \ell(2^l+n+id) = e_{1}(2^l+n+id) + e_{0}(2^l+n+id). $$
It follows that $e_{1}(2^l+n+id) \bmod{2}$ is constant with respect to $i$, and therefore 
$$A_{\mathbf{t}}(d) \geq A_{\mathbf{t}}(2^l+n,d) \geq A_0(2^l +n,d) = A_0(n,d).$$
Taking the supremum with respect to $n$, we obtain $A_{\mathbf{t}}(d) \geq A_0(d)$. The reverse inequality is obtained in a similar fashion.
\end{proof}

We move on to study the lengths of monochromatic progressions starting at $0$, namely the values $A_v(0,d)$. Observe that the case when $v$ is a block of zeros is rather uninteresting. Indeed, if $v = 0^i$ for some $i \geq 1$, then $e_v(2^i d) = 1 + e_v(2^{i-1} d)$ so $A_v(0,d) \leq 2^i$ for any $d \geq 1$. Therefore, we will focus on patterns $v$ belonging to the set
$$ V = \{ v \in \{0,1\}^*: |v| \geq 2 \text{ and } v \neq 0^i\}. $$
The following proposition exhibits infinite families of $d$ such that $A_v(0,d)$ is ``almost'' linear with respect to $d$. In particular, this means that for each fixed $v \in V$ the values $A_v(0,d)$ (and consequently $A_v(d)$) are arbitrarily large.

\begin{prop} \label{prop:limit_points}
Let $v  \in V$ and write $v = 0^i u 0^j$, where $i \geq 0, j \geq 0$, and $u$ begins and ends with a $1$. Then there exist $x,y \in \{0,1\}^{|v|-1}$ such that 
\begin{equation} \label{eq:limit_points_cong}
|0^i x|_{v} + |yx|_{v} + |y 0^j|_{v} \equiv 1 \pmod{2}.
\end{equation}
Furthermore, assume that $(x_{\min},y_{\min})$ is the lexicographically minimal solution to \eqref{eq:limit_points_cong} and put
\begin{align*}
 C_v &= \frac{[x_{\min}]_2}{2^{j+|v|-1}}, \\
 B_v &= [y_{\min}]_2 - 2^j C_v.
 \end{align*}
Fix $l,m \in \N$, where $m$ is odd and let $d_k = 2^{l+j}(2^k+1)m$ for $k \in \N$. Then we have
$$ A_v(0,d_k)  = \frac{C_v}{2^l m^2}d_k + O(1), $$
as $k \to \infty$, where the implied constant depends only on $v,l,m$.
In particular, if $m=1$, then for all $k \geq 2|v|-2$ we have
$$ A_v(0,d_k) = \frac{C_v}{2^l} d_k + B_v.$$
\end{prop}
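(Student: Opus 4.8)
The plan is to control $e_v(nd_k) \bmod 2$ through the binary expansion of $nd_k$ and to isolate the three boundary effects encoded by \eqref{eq:limit_points_cong}. Since $d_k = 2^{l+j}(2^k+1)m$, for every $n$ with $mn < 2^k$ the addition $mn\cdot 2^k + mn$ involves no carries, so if $F \in \{0,1\}^k$ denotes the length-$k$ binary frame of $mn$ (that is, $[F]_2 = mn$, leading zeros allowed), the canonical expansion of $nd_k$ is obtained from $FF$ by deleting leading zeros and appending $0^{l+j}$. Prepending the $0^i$ demanded by the counting convention, and using that prepending extra leading zeros never changes the number of occurrences of a pattern whose core starts with a $1$, I obtain
$$ e_v(nd_k) = |0^i F F 0^{l+j}|_v \qquad (mn < 2^k). $$
By Corollary \ref{cor:general_bound} the quantity $A_v(0,d_k)$ is finite, and since the minimizing index will turn out to satisfy $mn < 2^k$, it suffices to understand the parity of the right-hand side as $F$ varies.

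The heart of the argument is the following parity formula: if $k \geq 2|v|-2$ and $mn < 2^k$, and $x,y \in \{0,1\}^{|v|-1}$ denote respectively the first and the last $|v|-1$ letters of $F$, then
$$ e_v(nd_k) \equiv |0^i x|_v + |yx|_v + |y0^j|_v \pmod 2. $$
To prove it I would classify the occurrences of $v = 0^iu0^j$ in $0^iFF0^{l+j}$ by the position of the core $u$; as $u$ begins and ends with $1$ and all $1$'s lie in the two copies of $F$, the core sits in the interior of one copy or straddles their junction. A core far enough from both ends of a copy that the required $0^i$ and $0^j$ stay inside it occurs at the same relative position in the other copy with an identical local context, so these occurrences cancel in pairs modulo $2$. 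What survives are exactly the occurrences anchored at one of three boundary zones: the leading $0^i$, contributing $|0^i x|_v$; the junction, whose straddling occurrences (including those that borrow the gap $0^{k-\ell(mn)}$ recorded in the leading zeros of $x$) fit in the window $yx$ and contribute $|yx|_v$; and the trailing $0^{l+j}$, contributing $|y0^j|_v$ since $l+j \geq j$. The hypothesis $k \geq 2|v|-2$ is precisely what forces each copy of $F$ to carry disjoint head- and tail-windows of length $|v|-1$, so that the three zones are disjoint and every non-interior occurrence is counted once. This bookkeeping --- in particular checking that partially straddling occurrences are absorbed into $|yx|_v$ and not double counted --- is where I expect the main difficulty to lie.

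Granting the formula, existence of a solution of \eqref{eq:limit_points_cong} is a purely combinatorial fact: one exhibits $x,y$ producing a single occurrence of $v$ across the three windows, for instance by aligning $v$ to straddle the centre of $yx$ (alternatively, Lemma \ref{lem:prefix} shows the sum is non-constant). This justifies taking the lexicographically minimal solution $(x_{\min},y_{\min})$ and defining $C_v,B_v$ as in the statement.

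Finally, the parity formula shows that for $mn < 2^k$ the colour of $n$ depends only on the pair $(x,y)$ of leading and trailing $|v|-1$ bits of the frame of $mn$. Hence $A_v(0,d_k)$ is the least $n \geq 1$ whose frame gives an odd value of \eqref{eq:limit_points_cong}, equivalently the least multiple $mn$ of $m$ below $2^k$ that does so. Because the top $|v|-1$ bits dominate the size of $mn$, any value below $[x_{\min}]_2 2^{k-|v|+1}$ has leading bits lexicographically smaller than $x_{\min}$ and hence even colour; so the minimizer has leading bits exactly $x_{\min}$, after which one minimizes the remaining bits. For $m=1$ the colour is independent of the middle bits, so these may be set to zero, pinning the minimizer down as $n^{\ast} = [x_{\min}]_2 2^{k-|v|+1} + [y_{\min}]_2$ (here $k \geq 2|v|-2$ guarantees that the blocks $x_{\min}$ and $y_{\min}$ do not overlap), and a direct computation rewrites this as $\tfrac{C_v}{2^l}d_k + B_v$. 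For general odd $m$ one solves $mn \equiv [y_{\min}]_2 \pmod{2^{|v|-1}}$ together with $mn \equiv 0 \pmod m$ by the Chinese remainder theorem (valid since $\gcd(m,2)=1$), locating a suitable multiple of $m$ within $O(m2^{|v|})$ of $[x_{\min}]_2 2^{k-|v|+1}$; dividing by $m$ and comparing with $\tfrac{C_v}{2^l m^2} d_k = [x_{\min}]_2 2^{k-|v|+1}/m + O(1)$ yields $A_v(0,d_k) = \tfrac{C_v}{2^l m^2}d_k + O(1)$, the implied constant depending only on $v,l,m$.
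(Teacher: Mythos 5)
Your proposal is correct and follows essentially the same route as the paper's proof: the carry-free doubling giving $e_v(nd_k)=|0^i F F 0^{l+j}|_v$ for $mn<2^k$, the cancellation of interior occurrences reducing the parity to the three boundary windows of \eqref{eq:limit_points_cong}, and the interval-plus-residue (CRT) argument showing the minimizer's leading block must be $x_{\min}$, with the exact minimizer $[x_{\min}0^{k-2|v|+2}y_{\min}]_2$ when $m=1$. The only soft spot is your existence step for \eqref{eq:limit_points_cong}: aligning $v$ to straddle the centre of $yx$ can fail as stated (e.g.\ for $v=011$ the central alignment yields an even total no matter how the free bit is chosen), where the paper instead gives an explicit three-case construction depending on whether $v$ begins or ends with a $0$; however, your parenthetical fallback via Lemma \ref{lem:prefix} is sound once one notes that non-constancy of the sum modulo $2$ already yields a solution.
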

\begin{proof}
For \eqref{eq:limit_points_cong} to hold it is necessary and sufficient that either precisely one or all three of the following congruences are satisfied:
\begin{align}
|y x|_{v} &\equiv 1 \pmod{2}, \label{eq:limit_points_cong1} \\
|0^i x|_{v} &\equiv 1 \pmod{2}, \label{eq:limit_points_cong2} \\
|y 0^j|_{v} &\equiv 1 \pmod{2}. \label{eq:limit_points_cong3}
\end{align}
If $v$ begins with a $0$, then for $x = 0^{i-1} u 0^j$ and $y=1^{|v|-1}$ out of the three above congruences only \eqref{eq:limit_points_cong2} holds. If $v$ ends with a $0$, then for $x = 1^{|v|-1}$ and $y=0^i u 0^{j-1}$ only \eqref{eq:limit_points_cong3} holds. Finally, if $v$ both begins and ends with a $1$, then  \eqref{eq:limit_points_cong2} and \eqref{eq:limit_points_cong3} can never be satisfied, and thus we can take $yx = v0^{|v|-2}$ so that \eqref{eq:limit_points_cong1} holds.

Fix $k$ such that $2^{k-2|v|+2} > m$. For each positive integer $n < 2^k/m$ let $w_n \in \{0,1\}^{k}$ satisfy $[w_n]_2 = mn$.  Write $w_n = x_n z_n y_n$, where $|x_n| = |y_n| = |v|-1$.
Then we have 
$$  e_{v}(d_kn) = |0^i w_n w_n  0^{l+j}|_{v} \equiv |0^i x_n|_{v} + |y_n x_n|_{v} + |y_n 0^j|_{v} \pmod{2} $$
so $e_{v}(d_kn) \equiv 1 \pmod{2}$ if and only if $(x,y) = (x_n,y_n)$ is a solution to \eqref{eq:limit_points_cong}. 

We are now going to show that for any $x,y \in \{0,1\}^{|v|-1}$ there exists $n$ such that $x_n=x$ and $y_n=y$. These conditions are equivalent to
\begin{equation} \label{eq:limit_points1}
2^{k -|v|+1}[x]_2 \leq  mn  < 2^{k -|v|+1}([x]_2+1)
\end{equation}
and
\begin{equation} \label{eq:limit_points2}
mn \equiv [y]_2 \pmod{2^{|v|-1}},
\end{equation}
respectively. Since
$$ 2^{k -|v|+1}([x]_2+1) - 2^{k -|v|+1}[x]_2 > 2^{|v|-1}m, $$
in each congruence class modulo $2^{|v|-1}$ there exists $n$ for which \eqref{eq:limit_points1} holds. Therefore, we obtain some $n$ simultaneously satisfying \eqref{eq:limit_points1} and \eqref{eq:limit_points2}. Moreover, we see that minimal such $n$ satisfies the stronger inequality
\begin{equation} \label{eq:limit_points3}
2^{k -|v|+1}[x]_2 \leq  mn  < 2^{k -|v|+1}[x]_2 + 2^{|v|-1}m.
\end{equation}

Now, if $n=A_v(0,d_k)$, then we must have $x_n=x_{\min}$ (though not necessarily $y_n = y_{\min}$). Plugging $x=x_{\min}$, $n = A_v(0,d_k)$, and $2^k = 2^{-(l+j)}d_k/m - 1$ into \eqref{eq:limit_points3}, after some manipulation we get the desired asymptotic expression for $A_v(0,d_k)$.

If $m=1$, we have $[w_n]_2 = n$, and thus the minimal $n$ satisfying  $e_{v}(d_kn) \equiv 1 \pmod{2}$ is  
\begin{align*}
A_v(0,d_k) &=n = [x_{\min} 0^{k-2|v|+2}y_{\min}]_2  = 2^{k -|v|+1}[x_{\min}]_2 + [y_{\min}]_2 \\
&= \frac{C_v}{2^l} d_k + [y_{\min}]_2 - 2^j C_v. \qedhere
\end{align*}
\end{proof}

Note that taking $v = 11$ and $l=1$ we obtain precisely case (ii) of Theorem \ref{thm:main1}. As an immediate corollary, we can determine an infinite set of limit points of the sequence $(A_v(0,d)/d)_{n \geq 0}$.

\begin{cor} \label{cor:limit_points}
For all $v \in V$ and $l,m \in \N, m \geq 1$ the number $C_v/(2^l m^2)$ is a limit point of the sequence $(A_v(0,d)/d)_{d \geq 1}$.
\end{cor}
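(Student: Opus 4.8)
The plan is to extract Corollary \ref{cor:limit_points} directly from Proposition \ref{prop:limit_points}, which does essentially all the work. The key observation is that the proposition furnishes, for each triple $(v,l,m)$ with $v \in V$, $l \in \N$, and $m \geq 1$ odd, an explicit infinite family of differences $d_k = 2^{l+j}(2^k+1)m$ satisfying the asymptotic equality
$$ A_v(0,d_k) = \frac{C_v}{2^l m^2} d_k + O(1) \qquad (k \to \infty). $$
Dividing through by $d_k$, this immediately yields
$$ \frac{A_v(0,d_k)}{d_k} = \frac{C_v}{2^l m^2} + O\!\left(\frac{1}{d_k}\right). $$
Since $d_k \to \infty$ as $k \to \infty$, the error term vanishes, and hence $A_v(0,d_k)/d_k \to C_v/(2^l m^2)$. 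This exhibits a subsequence of $(A_v(0,d)/d)_{d \geq 1}$ converging to $C_v/(2^l m^2)$, which is precisely the definition of a limit point.

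The only genuine subtlety I would address is the parity restriction in the proposition: Proposition \ref{prop:limit_points} requires $m$ to be odd, whereas the corollary is stated for all $m \geq 1$. To handle an even $m$, I would write $m = 2^a m'$ with $m'$ odd and $a \geq 1$, and then reparametrize. Absorbing the factor $2^a$ into the exponent $l$ — that is, applying the proposition with odd part $m'$ and shifted parameter $l' = l + 2a$ so that $2^{l'} (m')^2 = 2^{l}\, 2^{2a} (m')^2 = 2^l m^2$ — recovers exactly the target value $C_v/(2^l m^2)$. Concretely, the proposition applied to the odd number $m'$ and the parameter $l + 2a$ gives a limit point $C_v/(2^{l+2a}(m')^2) = C_v/(2^l m^2)$. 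Thus every value of the form $C_v/(2^l m^2)$ with $m \geq 1$ arbitrary is realized as a limit point, and the restriction to odd $m$ in the proposition is no loss of generality for the corollary.

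I would also note the minor point that the proposition only guarantees an upper index range $k \to \infty$, not that every $d_k$ is distinct or that the $A_v(0,d_k)$ are eventually well-behaved; but since $d_k$ is strictly increasing in $k$ (as $2^k + 1$ is), the sequence $(d_k)$ is a legitimate strictly increasing subsequence of positive integers, so there is no degeneracy and the limit-point conclusion is valid.

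The main obstacle, such as it is, is entirely bookkeeping: confirming that the even-$m$ reduction lands on the correct value and that all quantities $C_v$, $B_v$ are well-defined (which they are, as they depend only on $v$ through the lexicographically minimal solution $(x_{\min}, y_{\min})$ to \eqref{eq:limit_points_cong}, whose existence is established in the first part of the proof of Proposition \ref{prop:limit_points}). No new ideas beyond the proposition are needed; the corollary is a direct and essentially immediate consequence.
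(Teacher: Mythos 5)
Your proposal is correct and matches the paper's intended argument: the paper states the corollary as an immediate consequence of Proposition \ref{prop:limit_points}, exactly via dividing the asymptotic $A_v(0,d_k) = \frac{C_v}{2^l m^2} d_k + O(1)$ by $d_k \to \infty$. Your reduction of even $m = 2^a m'$ to the odd case via $l' = l + 2a$ (so that $2^{l'}(m')^2 = 2^l m^2$) is the right bookkeeping for the parity restriction, which the paper leaves implicit.
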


Based on experimental calculations, we suspect that these are all nonzero limit points. In the next section we discuss this in more detail.

A typical distribution of the values $\log_2(A_{v}(0,d)/d)$ is shown in Figure \ref{fig:RS_limit_points1} below (with $v=11$).
\begin{figure}[h!]
    \includegraphics[width=\textwidth]{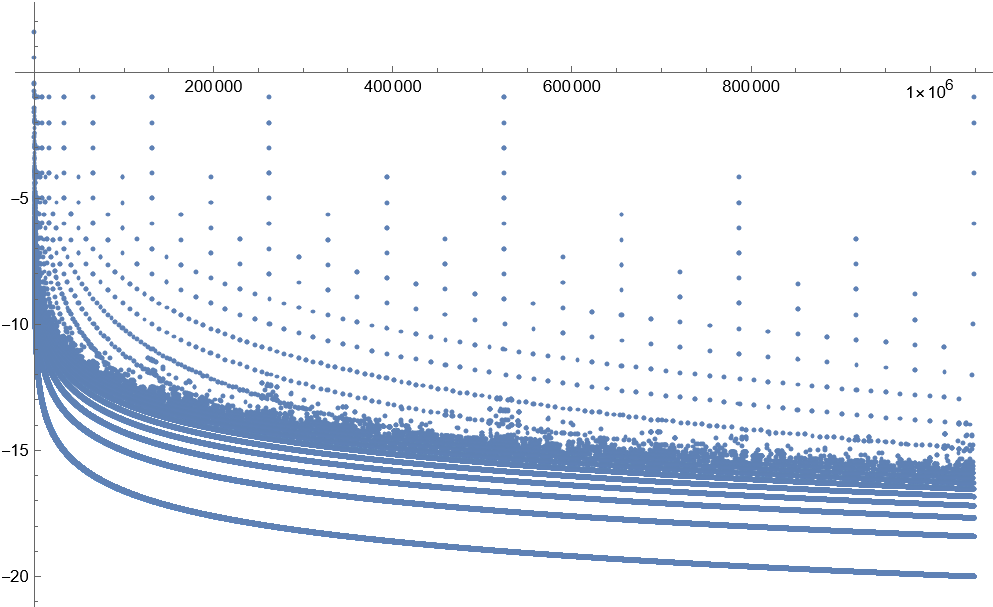}
    \caption{\label{fig:RS_limit_points1}The distribution of $\log_2(A_{\mathbf{r}}(0,d)/d)$}
\end{figure}

It is a matter of interest to determine the constants $C_v$ and (to a lesser extent) $B_v$ for $v \in V$.
From the definition we see that they are positive dyadic rational numbers, where additionally $C_v < 1$ (this inequality is strengthened in Proposition \ref{prop:Cv_inequality} below). Table \ref{tab:C_v} provides the values $C_v$ and $B_v$ for all $v \in V$ of length $|v| \leq 4$.

\begin{table}[H]  \renewcommand*{\arraystretch}{1.2}
\begin{tabular}{l|llllllllllllll}
$v$&$01$&$10$&$11$&$001$&$010$&$011$&$100$&$101$&$110$&$111$&$0001$&$0010$&$0011$&$0100$\\  \hline 
$C_v$ &$\frac{1}{2}$&$\frac{1}{4}$&$\frac{1}{2}$&$\frac{1}{4}$&$\frac{1}{4}$&$\frac{1}{2}$&$\frac{1}{16}$&$\frac{1}{4}$&$\frac{1}{4}$&$\frac{1}{2}$&$\frac{1}{8}$&$\frac{1}{8}$&$\frac{3}{8}$&$\frac{1}{16}$  \\

$B_v$ &$\frac{1}{2}$&$\frac{1}{2}$&$\frac{1}{2}$&$\frac{3}{4}$&$\frac{5}{2}$&$\frac{1}{2}$&$\frac{3}{4}$&$\frac{3}{4}$&$\frac{1}{2}$&$\frac{5}{2}$&$\frac{7}{8}$&$\frac{3}{4}$&$\frac{5}{8}$&$\frac{3}{4}$  \\

\end{tabular}
\vspace{\baselineskip}

\begin{tabular}{l|lllllllllll}
$v$&$0101$&$0110$&$0111$&$1000$&$1001$&$1010$&$1011$&$1100$&$1101$&$1110$&$1111$  \\ \hline
$C_v$ &$\frac{1}{4}$&$\frac{1}{4}$&$\frac{1}{2}$&$\frac{1}{64}$&$\frac{1}{8}$&$\frac{1}{8}$&$\frac{3}{8}$&$\frac{1}{16}$&$\frac{1}{4}$&$\frac{1}{4}$&$\frac{1}{2}$ \\

$B_v$ &$\frac{3}{4}$&$\frac{1}{2}$&$\frac{5}{2}$&$\frac{7}{8}$&$\frac{7}{8}$&$\frac{3}{4}$&$\frac{5}{8}$&$\frac{11}{4}$&$\frac{11}{4}$&$\frac{5}{2}$&$\frac{13}{2}$ \\
\end{tabular}
\caption{The values of $C_v$ and $B_v$ for $v \in V$ such that $|v| \leq 4$}
\label{tab:C_v}
\end{table}

In the case when $v$ ends with a $1$ the constants $C_v$ and $B_v$ can be quickly calculated without directly solving the congruence \eqref{eq:limit_points_cong}, as the following proposition shows. An explicit description should also be possible to obtain when $v$ ends with a $0$, although additional complications arise in this case.

\begin{prop} \label{prop:limit_points2}
Let $v \in V$ and assume that $v$ ends with a $1$. Write $v = ps$, where $s$ is the lexicographically minimal proper and nonempty suffix of $v$. Then the lexicographically minimal solution to \eqref{eq:limit_points_cong} is given by 
\begin{align*}
x_{\min} &= s0^{|p|-1}, \\
 y_{\min} &= \begin{cases}
0^{|s|-1} p &\text{if } p \neq 0, \\
0^{|v|-2} 1 &\text{if } p = 0.
\end{cases}
\end{align*}
In particular, we have $C_v = 2^{-|s|} [s]_2$ and $B_v = \max \{[p]_2,1\} - C_v$.
\end{prop}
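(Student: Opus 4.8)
The plan is to start from the reduced form of the congruence. Since $v$ ends with a $1$, in the decomposition $v=0^{i}u0^{j}$ of Proposition~\ref{prop:limit_points} we have $j=0$, so $|y0^{j}|_{v}=|y|_{v}=0$ because $|y|=|v|-1<|v|$. Hence \eqref{eq:limit_points_cong3} can never hold, and \eqref{eq:limit_points_cong} is equivalent to $|0^{i}x|_{v}+|yx|_{v}\equiv 1\pmod 2$. For a word $w$ and $1\le k\le|w|$ I write $\operatorname{pre}_k(w)$ and $\operatorname{suf}_k(w)$ for its prefix and suffix of length $k$. Two structural observations drive the argument. First, an occurrence of $v=0^{i}u$ inside $0^{i}x$ forces $\operatorname{pre}_{t+|u|}(x)=0^{t}u$ for some $t\in\{0,\dots,i-1\}$, and since $u$ begins with $1$ at most one such $t$ exists; thus $|0^{i}x|_{v}\in\{0,1\}$. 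Second, because $|y|=|x|=|v|-1$, every occurrence of $v$ in $yx$ straddles the junction, and an occurrence at offset $t\in\{0,\dots,|v|-2\}$ is equivalent to the conjunction of the \emph{$x$-condition} $\operatorname{pre}_{t+1}(x)=\operatorname{suf}_{t+1}(v)$ and the \emph{$y$-condition} $\operatorname{suf}_{|v|-1-t}(y)=\operatorname{pre}_{|v|-1-t}(v)$. I let $T(x)$ denote the set of offsets $t$ meeting the $x$-condition.

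The key step is to decide for which $x$ a solution $y$ exists, and I would do this by a single-bit toggling argument. If $T(x)\neq\varnothing$, set $t_0=\min T(x)$ and choose $y$ whose suffix of length $|v|-1-t_0$ equals $\operatorname{pre}_{|v|-1-t_0}(v)$, the remaining entries being arbitrary; flipping the entry $y[t_0]$ changes the occurrence status at $t_0$ while leaving every occurrence at offsets $t>t_0$ untouched (their $y$-conditions do not involve position $t_0$) and creating none at offsets $t<t_0$ (those fail the $x$-condition). Hence $|yx|_{v}$ attains both parities, so a solution exists regardless of $|0^{i}x|_{v}$. If instead $T(x)=\varnothing$, then $|yx|_{v}=0$ for all $y$, and also $|0^{i}x|_{v}=0$ (an occurrence in $0^{i}x$ would place $0^{t}u$, a suffix of $v$, as a prefix of $x$, forcing $T(x)\neq\varnothing$), so no solution exists. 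Therefore a solution exists precisely when $x$ begins with a proper nonempty suffix of $v$. Minimising $[x]_2$ over such $x$, the lexicographically least choice is to take the lexicographically least suffix $s$ as the prefix and pad with zeros, i.e.\ $x_{\min}=s0^{|p|-1}$; the case $|0^{i}x|_{v}=1$ is automatically subsumed because $0^{t}u=\operatorname{suf}_{t+|u|}(v)$ is itself a proper suffix of $v$.

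To pin down $y_{\min}$ I would exploit the lexicographic minimality of $s$ through the fact that \emph{$s$ has no proper border}: a proper border of length $b$ would make $\operatorname{suf}_b(v)=\operatorname{suf}_b(s)=\operatorname{pre}_b(s)$ a proper prefix of $s$, hence strictly lexicographically smaller than $s$ while also being a proper suffix of $v$, contradicting minimality. Since $\operatorname{suf}_{k}(v)=\operatorname{suf}_{k}(s)$ ends in $1$ for $k\le|s|$, while $\operatorname{pre}_{k}(x_{\min})$ ends in $0$ once $k>|s|$, border-freeness gives $T(x_{\min})=\{|s|-1\}$. Consequently $|yx_{\min}|_{v}=1$ exactly when $\operatorname{suf}_{|p|}(y)=p$, i.e.\ when $y$ ends with $p$. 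It then remains to compute the constant $c=|0^{i}x_{\min}|_{v}\bmod 2$: using $T(x_{\min})=\{|s|-1\}$ one finds $c=1$ iff $|s|\ge|u|$, and a short count of leading zeros shows that $|s|\ge|u|$ forces $p=0^{|p|}$ with moreover $|p|=1$ (if $|p|\ge2$ then $v$ would start with $0^{|p|}$ and the suffix $\operatorname{suf}_{|v|-1}(v)$ would have strictly more leading zeros than $s$, again contradicting minimality). Thus $c=1\iff p=0$.

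Reading off $y_{\min}$: when $p\neq0$ we have $c=0$, so we need $y$ to end with $p$, and the least such $y$ is $0^{|s|-1}p$; when $p=0$ we have $c=1$, so we need $y$ not to end with $p=0$, and the least such $y$ is $0^{|v|-2}1$. This is exactly the claimed $y_{\min}$. Finally the formulas for $C_v$ and $B_v$ follow by substitution: $[x_{\min}]_2=[s]_2\,2^{|p|-1}$ together with $|v|=|p|+|s|$ gives $C_v=[x_{\min}]_2/2^{|v|-1}=2^{-|s|}[s]_2$, while $[y_{\min}]_2=\max\{[p]_2,1\}$ gives $B_v=[y_{\min}]_2-C_v=\max\{[p]_2,1\}-C_v$. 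I expect the main obstacle to be controlling the parity of $|yx|_{v}$ as $y$ ranges over all completions; this is exactly what the toggling argument at $\min T(x)$, together with the border-freeness of the lexicographically least suffix, is designed to resolve.
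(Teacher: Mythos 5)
Your proof is correct, and it takes a genuinely different route from the paper's. The paper treats the two congruences \eqref{eq:limit_points_cong1} and \eqref{eq:limit_points_cong2} separately: it shows by a direct lexicographic comparison (using minimality of $s$ and the fact that all proper suffixes of $v$ end in a $1$) that $(s0^{|p|-1},0^{|s|-1}p)$ is the minimal solution of \eqref{eq:limit_points_cong1}, observes that $(0^{i-1}u,0^{|v|-1})$ is the minimal solution of \eqref{eq:limit_points_cong2}, and compares the two pairs; they coincide precisely when $p=0$, in which case the paper passes to the successor pair $(s0^{|p|-1},0^{|v|-2}1)$ and verifies it satisfies \eqref{eq:limit_points_cong2} but not \eqref{eq:limit_points_cong1}. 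You instead fix $x$ and decide whether \emph{any} $y$ solves \eqref{eq:limit_points_cong}: your one-bit toggling at $t_0=\min T(x)$ shows $|yx|_v$ attains both parities whenever $T(x)\neq\varnothing$, and that no solution exists when $T(x)=\varnothing$, so the admissible $x$ are exactly those beginning with a proper nonempty suffix of $v$; minimizing first in $x$, then in $y$, yields the claim. This route leans on two structural facts the paper never isolates --- that the lexicographically minimal proper suffix is unbordered (pinning down $T(x_{\min})=\{|s|-1\}$), and the equivalence $|0^i x_{\min}|_v = 1 \iff |s|\geq|u| \iff p=0$ --- and both of your arguments for these are sound; I verified in particular the two steps you leave terse: that every proper suffix of $v$ of length at least $|u|$ has the form $0^tu$ (which your ``one finds'' step tacitly uses), and that $s$ minimal among suffixes implies $s0^{|p|-1}$ minimal among zero-padded words, which holds because every proper suffix of $v$ ends in a $1$. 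What your approach buys is a complete solvability criterion in $x$ (which could be useful for the case of $v$ ending in a $0$, mentioned as open before the proposition, where the paper's tailored comparison would need reworking); what the paper's buys is brevity, at the price of the somewhat ad hoc successor step when the two minimal pairs collide.
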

\begin{proof}
Because $v$ ends with a $1$, the congruence \eqref{eq:limit_points_cong3} has no solution $y \in \{0,1\}^{|v|-1}$. Our task reduces to showing that $(x_{\min},y_{\min})$ as in the statement is the lexicographically minimal pair such that precisely one of \eqref{eq:limit_points_cong1} and \eqref{eq:limit_points_cong2} holds.

We first show that the pair  
$$(x_1,y_1) =  (s 0^{|p|-1},0^{|s|-1} p)$$
is the minimal solution to \eqref{eq:limit_points_cong1}. To see this, assume that $|yx|_{v} \equiv 1 \pmod{2}$ for some $x,y \in \{0,1\}^{|v|-1}$ such that $(x,y) \leq (x_1,y_1)$ lexicographically. This implies that $yx$ contains at least one occurrence of $v$, hence we can write $v = p's'$ with $p',s'$ both nonempty, where $x = s' q$ and $y = r p'$ for some $q,r \in \{0,1\}^*$. We also have $s' q \leq s 0^{|p|-1}$. Since $s'$ ends with a $1$, this implies $|s'| \leq |s|$. Hence, we also have $s' \leq s$, which yields $s'=s$ by the choice of $s$, and consequently $x=x_1$. This also means that $p'=p$, and thus $y=y_1$, since $y_1$ is the lexicographically minimal word of length $|v|-1$ with suffix $p$.
 
If $v$ begins with a $1$ (which falls under the case $p \neq 0$), then $(x_1,y_1)$ is already the minimal solution to \eqref{eq:limit_points_cong} as \eqref{eq:limit_points_cong2} is never satisfied. Hence, assume from now on that $v$ begins with $i \geq 1$ zeros. Then the pair
$$(x_2, y_2) = (0^{i-1} u , 0^{|v|-1})$$
is the minimal one satisfying \eqref{eq:limit_points_cong2}. 

We always have $(x_1, y_1) \leq (x_2, y_2)$ in the lexicographical sense with equality if and only if $p$ is a block of zeros. By minimality of $s$ we must have  $s = 0^{i-1} u$ in such a case so the condition $(x_1,y_1) = (x_2,y_2)$ is equivalent to $p = 0$.
In the case $(x_1, y_1) < (x_2 , y_2)$  the pair $(x_1,y_1)$ remains the minimal solution to \eqref{eq:limit_points_cong}. Otherwise, if $(x_1, y_1) = (x_2 , y_2)$, then $(x,y)=(s0^{|p|-1},0^{|v|-2} 1) = (0^{i-1}u,0^{|v|-2} 1)$ is the successor of $(x_1,y_1)$. Observe that such pair $(x,y)$ still satisfies \eqref{eq:limit_points_cong2}. On the other hand, if \eqref{eq:limit_points_cong1} held for such $(x,y)$ then we would have an occurrence of $v$ in the word $yx = 0^{|v|-2} 1 s 0^{|p|-1}$. Since $v \neq 1 s$ and $v$ ends with a $1$, there would exist a representation $v = p's'$, where $s'$ is a proper prefix of $s$ but this contradicts the minimality of $s$. 

It remains to observe that $2^{-|s|}[s]_2 = 2^{-|x_{\min}|} [x]_2 = C_v$.
\end{proof}

In the following proposition we provide a good upper bound on $C_v$ for all $v \in V$.

\begin{prop} \label{prop:Cv_inequality}
Let $v \in V$ and write $v = w0^j$, where $w$ ends with a $1$. Then we have
$$C_v \leq \begin{cases} 
 1/2 &\text{if } j=0, \\
2^{-2j} &\text{if } j \geq 1.
\end{cases}
$$
\end{prop}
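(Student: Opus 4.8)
The plan is to read off everything from the description of $C_v$ in Proposition~\ref{prop:limit_points}. Recall that $C_v=[x_{\min}]_2/2^{j+|v|-1}$, where $(x_{\min},y_{\min})$ is the lexicographically least pair solving \eqref{eq:limit_points_cong}. Since that order minimizes $x$ first, $x_{\min}$ is simply the smallest $x\in\{0,1\}^{|v|-1}$ admitting some $y$ with $|0^ix|_v+|yx|_v+|y0^j|_v\equiv1\pmod2$, and $C_v$ depends only on $x_{\min}$. Hence for $j\ge1$ the target $C_v\le2^{-2j}$ is equivalent to $[x_{\min}]_2\le2^{|v|-1-j}$, and it suffices to exhibit a \emph{single} solution $(x,y)$ of \eqref{eq:limit_points_cong} with $[x]_2\le2^{|v|-1-j}$. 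The case $j=0$ is handled separately and is genuinely different.

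For $j=0$ the word $v$ ends with a $1$, so Proposition~\ref{prop:limit_points2} applies and gives $C_v=2^{-|s|}[s]_2$, where $s$ is the lexicographically least proper nonempty suffix of $v$; equivalently $C_v$ is the binary fraction $0.s$. I would observe that the one-letter word $1$ is itself a proper suffix of $v$. Thus either $s$ begins with $0$, giving $C_v<\tfrac12$, or $s$ begins with $1$, in which case minimality forces $s=1$ (any longer word beginning with $1$ strictly exceeds $1$ in the prefix order), giving $C_v=\tfrac12$. Either way $C_v\le\tfrac12$.

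For $j\ge1$ I would test the single candidate $x^{\ast}=0^{j-1}10^{|v|-1-j}$, which satisfies $[x^{\ast}]_2=2^{|v|-1-j}$. Splitting each of $|yx^{\ast}|_v$ and $|y0^j|_v$ into occurrences internal to $y$, internal to the second factor, and straddling the junction, the two internal-to-$y$ parts cancel modulo $2$ and one gets
$$|0^ix^{\ast}|_v+|yx^{\ast}|_v+|y0^j|_v\equiv |0^ix^{\ast}|_v+|x^{\ast}|_v+b(y,x^{\ast})+b(y,0^j)\pmod2,$$
where $b(y,z)$ counts occurrences of $v$ straddling the junction of $y$ and $z$. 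Now $|x^{\ast}|_v=0$ (it is too short), and because $x^{\ast}$ carries a single $1$, one checks $|0^ix^{\ast}|_v=1$ exactly when $|u|=1$ and $i\ge j$, and $=0$ otherwise; call this constant $c_0$. The junction counts differ only through suffixes of $v$ of length $>j$ that are prefixes of $x^{\ast}$: the all-zero suffixes $0^k$ ($k\le j-1$) appear in both $b(y,x^{\ast})$ and $b(y,0^j)$ and cancel; the suffix $0^j$ contributes only to $b(y,0^j)$, giving $[w\in\Suf(y)]$ with $w=0^iu$; and the one remaining possibility is the suffix $0^{j-1}10^j$, present precisely when $|v|\ge2j+1$ and $w$ ends in $0^{j-1}1$ (a constant $P\in\{0,1\}$), contributing $[q\in\Suf(y)]$ where $q$ is the prefix of $v$ of length $|v|-2j$. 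Hence the left side of \eqref{eq:limit_points_cong} is congruent to $c_0+P\,A+B$ with $A=[q\in\Suf(y)]$ and $B=[w\in\Suf(y)]$. I then pick $y$: if $c_0=1$ take $y=1^{|v|-1}$ (here $i\ge j\ge1$, so $v$ and $w$ begin with $0$ and $A=B=0$); if $c_0=0,P=0$ take $y$ ending in $w$ (so $B=1$); and if $c_0=0,P=1$ take $y$ whose last $|v|-2j$ letters equal $q$ but which does not end in $w$ (so $A=1,B=0$, using the $j\ge1$ free letters between the two suffixes). In every case $x^{\ast}$ solves \eqref{eq:limit_points_cong}, so $[x_{\min}]_2\le[x^{\ast}]_2=2^{|v|-1-j}$ and $C_v\le2^{-2j}$.

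The main obstacle is the junction bookkeeping: verifying that no suffix of $v$ of length greater than $j$ other than $0^{j-1}10^j$ can prefix $x^{\ast}$, and that the three choices of $y$ genuinely realize the claimed values of $A,B$ in the degenerate regimes (for instance $j=1$, where $\Suf(y)$ almost determines $y$, or $|v|=2j+1$, where $q$ shrinks to one letter). These are finite but delicate checks. I also expect equality $C_v=2^{-2j}$ to occur exactly when $x^{\ast}$ is already the minimal admissible $x$, which matches the equality cases visible in Table~\ref{tab:C_v}.
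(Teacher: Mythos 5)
Your proposal is correct and takes essentially the same route as the paper's proof: the $j=0$ case is handled identically via Proposition \ref{prop:limit_points2} and the minimal suffix $s$, and for $j\geq 1$ you use the same witness $x=0^{j-1}10^{|v|-j-1}$ with the same case trichotomy, your $(c_0,P)$ cases matching the paper's cases of whether $0^ix$ contains $v$ and whether $0^{j-1}10^j$ is a suffix of $v$, with the paper's explicit choices $y=1^{|v|-1}$, $y=0^{j-1}w$, $y=0^{2j-1}q$ being instances of your prescriptions. Your straddling-occurrence bookkeeping giving the parity $c_0+PA+B$ is just a more systematic packaging of the paper's direct counts of $|0^ix|_v$, $|yx|_v$, $|y0^j|_v$, not a genuinely different argument.
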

\begin{proof}
First, assume that $v$ ends with a $1$. If $v = 01^l$ or $v=1^l$ for some $l \geq 1$, then with the notation of Proposition \ref{prop:limit_points2} we have $s=1$, which gives $C_v = 1/2$. Otherwise, $s$ must begin with a $0$ so $C_v < 1/2$.

Now, assume that $v$ ends with a $0$, that is, $j \geq 1$. Write $w = 0^i u$, where $u$ begins with a $1$. We are going to show that the congruence \eqref{eq:limit_points_cong} has a solution with $x = 0^{j-1} 1  0^{|v|-j-1}$, which in turn gives $[x_{\min}]_2 \leq [x]_2 = 2^{|v|-j-1}$, and therefore $C_v \leq  2^{-2j}$. We consider a few possibilities.  

First, if $0^ix$ contains $v$, then we must have $u=1$ and $i \geq 1$, which gives $|0^ix|_v=1$. In such a case, for $y=1^{|v|-1}$ the pair $(x,y)$ is a solution to \eqref{eq:limit_points_cong}. 

Hence, assume that $|0^ix|_v = 0$. If $0^{j-1} 1 0^j$  is not a suffix of $v$, put $y= 0^{j-1}w$. Then we get $|y0^j|_v = 1$ and $|yx|_v = 0$ so $(x,y)$ satisfies \eqref{eq:limit_points_cong}. Otherwise, if $0^{j-1} 1 0^j$  is a suffix of $v$ (which entails $2j+1 \leq |v|$), write $v =  q 0^{j-1} 1 0^j$ and put $y = 0^{2j-1}q$. Then we get $|y0^j|_v = 0$ and $|yx|_v = |0^{2j-1} v 0^{|v|-2j-1} |_v = 1$, and thus \eqref{eq:limit_points_cong} again holds.
\end{proof}

One can quickly check that the obtained bound is sharp. The cases where $C_v = 1/2$ were described in the proof. At the same time, we get $C_v = 2^{-2j}$ for $v=10^j$, since the lexicographically minimal solution to \eqref{eq:limit_points_cong} is $(x_{\min},y_{\min}) = (0^{j-1}1,0^j)$.

\section{Further problems and conjectures} \label{sec:problems}

In this section we suggest possible improvements to the presented results and pose related problems and conjectures. They are mostly based on the analysis of the values $A_v(0,d)$ for $|v| \leq 4, d \leq 2^{20}+1$ and $A_v(d)$ for $|v| \leq 3, d \leq 2^{12}+1$. We note that precise numerical results concerning $A_v(d)$ are computationally expensive to obtain even for small $|v|$. In the final section we discuss how the values $A_{v}(d)$ can be calculated. 
The numerical data is available at the repository \cite{Git}.

To begin, for $v \in \{0,1\}^+$ let us consider the upper limits
\begin{align*}
L_{0,v} &= \limsup_{d \to \infty} \frac{A_v(0,d)}{d}, \\
L_v &= \limsup_{d \to \infty} \frac{A_v(d)}{d}
\end{align*}
Also let $C_v$ be as in Proposition \ref{prob:limit_points} and define $C_1 = 1, C_{0^i} = 0$. From the results in this paper we can conclude that 
 $$ C_v \leq L_{0,v} \leq L_v \leq 2^{|v|}.$$
 In particular, $L_{0,v}, L_v$ are finite and nonzero (with the exception $L_{0,0^i} = 0$).
Therefore, it is natural to ask for the precise values $L_{0,v}, L_v$, and more generally the entire sets of limit points.

\begin{prob} \label{prob:limit_points}
For $v\in\{0,1\}^+$ compute $L_{0,v}$ and $L_v$. More generally, compute all limit points of each of the sequences $(A_v(0,d)/d)_{d \geq 1}$ and $(A_v(d)/d)_{d \geq 1}$.
\end{prob}

A closely related problem is the following.

\begin{prob} \label{prob:upper_bound}
Do there exist constants $M_{0,v}$ and $M_v$ such that for all sufficiently large $d$ we have
\begin{align*}
A_{0,v}(d) &\leq L_{0,v} d + M_{0,v}, \\
A_{v}(d) &\leq L_{v} d + M_{v},
\end{align*}
where equality occurs for infinitely many $d$?
\end{prob}

Let us briefly summarize what is known so far for $A_v(0,d)$. We have $L_{0,1} = 1$ and $L_{0,11}=1/2$ by Theorems \ref{thm:MSS11} and \ref{thm:main1}, respectively. These results also give a linear upper bound on $A_v(0,d)$ for $v=1$ and $v=11$, which becomes an equality infinitely often. Additionally, Corollary \ref{cor:limit_points} exhibits infinitely many points of the sequence $(A_v(0,d)/d)_{d \geq 1}$ for $v$ in the set $V = \{v \in \{0,1\}^*: |v| \geq 2 \text{ and } v \neq 0^i\}$. Based on numerical evidence, we strongly believe that these are all nonzero limit points. Thus, we pose the following conjecture, where we also include the case $v=1$ with $C_1 = 1$.

\begin{con} \label{con:limit_points}
For all $v \in V \cup \{1\}$ the set of limit points of $(A_v(0,d)/d)_{d \geq 1}$ is equal to
$$ \left\{ \frac{C_v}{2^l m^2}: l,m \in \N, m \geq 1 \right\} \cup \{0\}.$$
In particular, 
$$ L_{0,v} = C_v. $$
\end{con}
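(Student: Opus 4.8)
The plan is to prove the two inclusions between the set of limit points of $(A_v(0,d)/d)_{d \geq 1}$ and the proposed set $\{C_v/(2^l m^2): l,m \in \N, m \geq 1\} \cup \{0\}$ separately, with essentially all the difficulty concentrated in showing that no further limit points occur. The inclusion $\supseteq$ is already available: Corollary \ref{cor:limit_points} realizes each value $C_v/(2^l m^2)$ as a limit point, and $0$ is a limit point because $A_v(0,d)$ stays bounded along a subsequence $d \to \infty$ (for instance $A_v(0,d) = 1$ whenever $e_v(d)$ is odd, which happens for infinitely many $d$). The genuine content is therefore the reverse inclusion, which splits into the sharp upper bound $A_v(0,d) \leq C_v d + o(d)$ — yielding $L_{0,v} = C_v$ since $C_v$ itself is the largest proposed value — together with the finer assertion that every subsequential limit of $A_v(0,d)/d$ has the stated arithmetic form.

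For the upper bound I would generalize the admissible-pair machinery of Section \ref{sec:main1}. Writing $d = 2^{\nu_2(d)} d'$ with $d'$ odd, one first reduces to odd $d$; this reduction is transparent when $v$ ends in a $1$ (appending a trailing zero to $(nd)_2$ does not change $|\cdot|_v$, so $A_v(0,2d) = A_v(0,d)$) and requires extra bookkeeping otherwise. Crucially, the power-of-two part of $d$ divides the ratio $A_v(0,d)/d$ by the corresponding power of $2$, which is exactly the mechanism producing the factors $2^l$ in the conjectured set. One then encodes the most and least significant digits of $(d)_2$ by a prefix/suffix pair $(p,s)$ and, for each such pair, seeks a witness $n = 2^{\ell - i} j + 1$ together with a short parity relation among $e_v(nd)$, $e_v(jd)$, $e_v(d)$ forcing one of them to be odd, exactly as in Lemma \ref{lem:main_idea} and Proposition \ref{prop:admissible}. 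The quantitative target is to guarantee such a witness with $n \leq C_v d + O(1)$, matching the exact asymptotics of Proposition \ref{prop:limit_points}; the constant $C_v = [x_{\min}]_2/2^{j+|v|-1}$ enters because the lexicographically minimal solution $(x_{\min},y_{\min})$ of \eqref{eq:limit_points_cong} dictates the smallest admissible shift.

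The main obstacle is twofold. First, the parity bookkeeping for a general pattern $v$ is far more delicate than for $v = 11$: in passing from $nd$ to $(n+1)d$ by binary addition one must track the occurrences of $v$ straddling the addition boundary and the propagation of carries, and the Rudin--Shapiro case already shows that a purely algorithmic refinement of $(p,s)$ spawns infinite exceptional families (such as the set $R$ of Table \ref{tab:no_admissible}) that have to be resolved individually. I expect a clean general argument to demand replacing this case analysis by a uniform statement describing how $|w|_v$ changes across a boundary in terms of the self-overlap (failure-function) structure of $v$ — precisely the combinatorics underlying Lemma \ref{lem:prefix} and Proposition \ref{prop:limit_points2}.

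The second and harder obstacle is ruling out intermediate limit points, which requires a structural \emph{converse}: that $A_v(0,d)$ can be comparable to $d$ only when $(d)_2$ is close to a self-concatenation, i.e. only when $d$ lies near the family $2^{a}(2^k+1)m$. For $v = 11$ this is visible in Theorem \ref{thm:main1}, where the values near $C_{11} d = d/2$ occur exactly at $d = 2^k \pm 1$; the conjecture asserts the analogous rigidity in general. Proving that large $A_v(0,d)$ forces approximate periodicity of $(d)_2$, and that the resulting limit value is then pinned to some $C_v/(2^l m^2)$, is the real gap and is presumably why the statement remains a conjecture. A plausible route is to show that if no short parity relation produces a witness $n \leq (C_v+\varepsilon)d$, then the prefix/suffix analysis forces the binary expansion of $d$ into a near-periodic shape of period dividing $2^k$, after which Proposition \ref{prop:limit_points} identifies the limit; turning this heuristic into a quantitative dichotomy, uniformly in $v$, is where I would expect the argument to be most demanding.
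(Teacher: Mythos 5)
This statement is Conjecture~\ref{con:limit_points} in the paper: the authors do not prove it, they state it as open and support it only by numerical evidence (computations of $A_v(0,d)$ for $|v| \leq 4$, $d \leq 2^{20}+1$). So there is no proof in the paper to compare against, and your proposal should be judged on its own merits as an attempted proof. As such, it is not a proof but a research program, and you say so yourself. The inclusion $\supseteq$ is indeed fine: Corollary~\ref{cor:limit_points} supplies every value $C_v/(2^l m^2)$, and $0$ is a limit point since $A_v(0,d)=1$ whenever $e_v(d)$ is odd, which happens for infinitely many $d$. But the entire content of the conjecture is the inclusion $\subseteq$, and both of its halves remain gaps in your sketch. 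For the bound $A_v(0,d) \leq C_v d + o(d)$ you propose generalizing the admissible-pair machinery of Section~\ref{sec:main1}; note that even in the single case $v=11$, where $C_{11}=1/2$, that machinery required a computer-driven case split into over $500$ prefix/suffix classes plus a dozen hand-resolved infinite exceptional families, and it is tailored to the identity $|u|_{11}+|u|_{00}+|u|_{01}+|u|_{10}=|u|-1$ used in Proposition~\ref{prop:special_case_1}. No uniform replacement in terms of the self-overlap structure of $v$ is exhibited, only named as desirable. The second half --- that every subsequential limit is pinned to the arithmetic form $C_v/(2^l m^2)$, i.e.\ that near-extremal $A_v(0,d)$ forces $(d)_2$ toward the self-concatenation shape of Proposition~\ref{prop:limit_points} --- is stated as a heuristic dichotomy with no mechanism for proving it; this rigidity assertion is exactly what makes the statement a conjecture.

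One further concrete soft spot: your reduction to odd $d$ claims that ``the power-of-two part of $d$ divides the ratio $A_v(0,d)/d$ by the corresponding power of $2$,'' which amounts to $A_v(0,2d)=A_v(0,d)$. This holds when $v$ ends in a $1$ (since $e_v(2n)=e_v(n)$), but for $v$ ending in a $0$ the paper explicitly poses this as an open problem and reports that the equality \emph{fails} for some $d$. Since the conjectured set encodes the factors $2^l$ precisely through this mechanism, your argument would need to control how $A_v(0,2d)$ deviates from $A_v(0,d)$ for such $v$ --- the ``extra bookkeeping'' you allude to is itself an unresolved sub-problem, not a routine adjustment. In summary: the easy inclusion is correctly handled, the difficulty is correctly located, but no step of the hard direction is actually carried out, so the proposal does not close (and honestly does not claim to close) the conjecture.
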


Additionally, according to the computations we believe that the choice of $d =d_k = 2^j(2^k+1)$ (as in Proposition \ref{prop:limit_points}) yields the maximal values of $A_v(0,d)$ in the sense of the following conjecture. This time the patterns $v \in \{1,11\}$ need to be excluded due to the special case $d = 2^k-1$, where the postulated inequality does not work. 

\begin{con} \label{con:upper_bound}
Let $v \in V \setminus \{11\}$ and write $v = u0^j$, where $u$ ends with a $1$. Then $M_{0,v} = B_v$ and for all $d \geq 2^{2|v|+1}$ we have
$$A_v(0,d) \leq C_v d + B_v,$$
where equality occurs if and only if $d = 2^j(2^k+1)$ for some $k \geq 2|v|-2$.
\end{con}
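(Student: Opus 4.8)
The equality (``if'') direction is already contained in Proposition \ref{prop:limit_points}: taking $l=0$ and $m=1$ there gives $d_k = 2^j(2^k+1)$ and $A_v(0,d_k) = C_v d_k + B_v$ for every $k \geq 2|v|-2$. In particular the bound $A_v(0,d) \le C_v d + B_v$ is attained infinitely often, so once the inequality is established for all large $d$ we immediately obtain $M_{0,v} = B_v$. Thus the real content is twofold: the upper bound $A_v(0,d) \le C_v d + B_v$ for all $d \ge 2^{2|v|+1}$, and the strictness of this inequality whenever $d$ is not of the form $2^j(2^k+1)$.

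The plan for the upper bound is to produce, for each such $d$, a positive integer $n \le C_v d + B_v$ with $e_v(nd) \equiv 1 \pmod 2$ by simultaneously prescribing a short prefix and a short suffix of $(nd)_2$. Write $d = 2^{\nu_2(d)} d'$ with $d'$ odd and set $\ell = \ell(d)$. Since $e_v(nd) = |0^i(nd)_2|_v$, the parity of $e_v(nd)$ is governed (exactly as in the proof of Proposition \ref{prop:limit_points}) by the blocks of length $|v|-1$ sitting at the top and the bottom of $(nd)_2$. The bottom block is controlled by a congruence $n d' \equiv t \pmod{2^{|v|-1}}$, which is solvable for any target $t$ because $d'$ is invertible modulo $2^{|v|-1}$; the top block is controlled by forcing $n$ to lie in a short interval whose position is dictated by the leading digits of $d$. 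Choosing the two targets to be the lexicographically minimal pair $(x_{\min}, y_{\min})$ solving \eqref{eq:limit_points_cong}, one finds an admissible $n$, and tracking the arithmetic shows that its minimal value is $C_v d + B_v$ up to the discrepancy caused by $d$ differing from $2^j(2^k+1)$. Here $C_v \le 1/2$ (Proposition \ref{prop:Cv_inequality}) guarantees that such an $n$ stays below $d/2$, so the prefix and suffix blocks of $(nd)_2$ do not interfere once $d \ge 2^{2|v|+1}$.

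For the strictness (``only if'') direction I would argue contrapositively: assuming $A_v(0,d)$ equals the extremal value $C_v d + B_v$ means that no smaller $n$ produces an odd $e_v(nd)$, which forces the prefix/suffix correspondence to be as rigid as possible. The rigidity in question is precisely the ``doubled'' structure $(nd)_2 = w\,w\,0^j$ seen for $d = 2^j(2^k+1)$: any additional freedom in how the leading digits of $d$ determine the top block of $(nd)_2$ relative to how the trailing digits determine the bottom block would allow one to lower the target $x$, hence $n$, below $C_v d + B_v$. Pinning down this rigidity should show that $d$ must be of the special form with $k \ge 2|v|-2$, and the threshold $d \ge 2^{2|v|+1}$ is what ensures the two length-$(|v|-1)$ blocks remain separated.

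The main obstacle is the carry control in the upper bound. For the Rudin--Shapiro pattern $v = 11$ this was handled by an explicit, computer-assisted case split on the prefix and suffix of $(d)_2$ (Lemma \ref{lem:main_idea} and the algorithm of Section \ref{sec:main1}), but a proof for \emph{arbitrary} $v$ requires a $v$-uniform mechanism ensuring that prescribing the top and bottom blocks of $(nd)_2$ does not trigger carries that corrupt the parity computation, and that the resulting minimal $n$ matches $C_v d + B_v$ exactly rather than merely asymptotically. Converting the asymptotic statement of Proposition \ref{prop:limit_points} into a sharp inequality with the correct additive constant $B_v$, uniformly over the prefix and suffix of $d$, is where I expect the real difficulty to lie.
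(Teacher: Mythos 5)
You should first note that the statement you are trying to prove is Conjecture~\ref{con:upper_bound}: the paper offers no proof of it at all, only numerical evidence (values of $A_v(0,d)$ for $|v| \leq 4$, $d \leq 2^{20}+1$). So there is no argument in the paper to match against, and anything short of a complete proof leaves the statement open. The part of your proposal that does work is the ``if'' direction: Proposition~\ref{prop:limit_points} with $l=0$, $m=1$ indeed gives $A_v(0,d_k) = C_v d_k + B_v$ exactly for $d_k = 2^j(2^k+1)$ and $k \geq 2|v|-2$, and $M_{0,v} = B_v$ would follow once the upper bound were known. That much is a correct reduction.

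The genuine gap is in the upper bound, and it is not merely the ``carry control'' you flag at the end --- it is the central premise of your plan. You assert that the parity of $e_v(nd)$ is governed by the length-$(|v|-1)$ blocks at the top and bottom of $(nd)_2$, ``exactly as in the proof of Proposition~\ref{prop:limit_points}.'' But that proof only controls the parity because for $d_k = 2^{l+j}(2^k+1)m$ the expansion has the doubled form $(nd_k)_2 = w_n w_n 0^{l+j}$ with $[w_n]_2 = mn$: the two identical copies of $w_n$ make all interior occurrences of $v$ cancel modulo $2$, leaving only the three boundary terms in \eqref{eq:limit_points_cong}. For general $d$ no such structure exists; the middle digits of $(nd)_2$ vary with $n$ and contribute occurrences of $v$ of uncontrolled parity, so prescribing $nd' \bmod 2^{|v|-1}$ and an interval for $n$ fixes nothing about $e_v(nd) \bmod 2$. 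This is precisely why even the single case $v=11$ (Theorem~\ref{thm:main1}), which only needed the weaker bound $(d+3)/2$, required the computer-assisted case distinction over hundreds of prefix/suffix classes of $(d)_2$ via Lemma~\ref{lem:main_idea} --- and even then produced the sporadic exception $d=39$ attaining the extremal value outside the $2^k \pm 1$ family. That exception also breaks your contrapositive ``rigidity'' argument for the ``only if'' direction: the principle that any deviation from the doubled structure permits a smaller $n$ is falsified in the one nontrivial case that has been fully analyzed, which is exactly why $v=11$ is excluded from the conjecture's hypothesis. Your proposal is therefore a reasonable research plan whose first two steps are sound, but the core of the statement --- a $v$-uniform mechanism replacing the per-pattern case analysis, and a proof that no sporadic exceptional $d$ exists for $v \in V \setminus \{11\}$ --- remains entirely open, in your write-up as in the paper.
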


Some observations concerning Problems \ref{prob:limit_points} and \ref{prob:upper_bound} can also be made for $A_v(d)$. Experimental computations suggest that if $v$ ends with a $1$, then the upper limits $L_v$ and $L_{\overline{v}}$ are identical and equal to $C_v$:
\begin{equation} \label{eq:limsup}
L_v = L_{\overline{v}} = C_v.
\end{equation}
This would immediately imply that for $v$ ending with a $1$ we have$L_{0,v} = L_v$, which means that ``large'' values $A_v(d)$ are approximated well by $A_v(0,d)$. Comparing the results stated in the introduction, this is indeed the case for the words $v =1,v=11$. Going one step further, one may also expect that the limit points of the sequences $(A_v(d)/d)_{d \geq 1}$ and $(A_v(0,d)/d)_{d \geq 1}$ are identical, i.e., Conjecture \ref{con:limit_points} also applies to $A_v(d)$.

Furthermore, it seems that the answer to the question concerning the existence of $M_v$ in Problem \ref{prob:upper_bound} is also affirmative. The equality $A_v(d) = L_v d + M_v$ apparently holds for sufficiently large $d=2^k-1$ when $v \in \{1,11,0,00\}$, or $d=2^k+1$ otherwise. In the case of the Thue--Morse and Rudin--Shapiro sequences we state the following conjectures. 

\begin{con} \label{con:TM_bound}
For all $d \geq 1$ we have
$$ A_{\mathbf{t}}(d) \leq d+5. $$
Moreover, equality occurs if and only if $d=2^k-1$ for some even $k \geq 2$.
\end{con}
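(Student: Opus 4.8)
The plan is to first reduce to odd $d$ via the identity $A_{\mathbf{t}}(2d) = A_{\mathbf{t}}(d)$ and then peel off the differences closest to $2^{\ell}$, where $\ell = \ell(d)$, using results already available. For odd $d$ with $d = 2^{\ell}-1$, Theorem \ref{thm:Par17} gives the exact value $A_{\mathbf{t}}(d) = 2^{\ell}+4 = d+5$ when $\ell$ is even and $2^{\ell} = d+1$ when $\ell$ is odd, so these are settled outright and pin down the extremal case. For odd $d$ with $d \ge 2^{\ell}-3$ the uniform bound $A_{\mathbf{t}}(d) \le 2^{\ell}$ for $d \le 2^{\ell}-2$ from the same theorem yields $A_{\mathbf{t}}(d) \le 2^{\ell} \le d+3 < d+5$, which is strict and consistent with the claimed characterization. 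Thus the genuine content lies in the range $2^{\ell-1} \le d \le 2^{\ell}-5$, where Parshina's bound $2^{\ell}$ is up to roughly twice the target $d+5$ and must be sharpened substantially on a per-$d$ basis; it is here that even the upper bound is not yet known.

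For these remaining $d$ I would mimic the argument used for $A_{\mathbf{r}}(d)$ in Section \ref{sec:main2_main3}, exploiting the fact that the Thue--Morse identity is cleaner than \eqref{eq:RS_identity}: writing $n = 2^{\ell} m + l$ with $0 \le l < 2^{\ell}$ one has simply $t_n \equiv t_m + t_l \pmod 2$, and the odd subsequence satisfies $t_{2n+1} = \overline{t_n}$ rather than the reversed-block relation of Proposition \ref{prop:interesting_property}. Along a monochromatic progression of difference $d$ the low part $l_i = (n+id) \bmod 2^{\ell}$ runs through an arithmetic progression modulo $2^{\ell}$, and since $d \ge 2^{\ell-1}$ the high part $m$ increments by one precisely when $l_i \ge 2^{\ell}-d$; monochromaticity of color $c$ becomes the coupled condition $t_{m_i} \equiv c + t_{l_i} \pmod 2$. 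The goal is a rigidity statement: a progression of length close to $d$ forces $n$ into an essentially unique residue class modulo a high power of $2$, after which one reads off the two binary expansions bracketing the progression and checks that one of them has a digit-sum of the wrong parity, exactly as in Cases I--IV of the proofs of Theorems \ref{thm:main2} and \ref{thm:main3}.

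To make the rigidity step rigorous one needs Thue--Morse analogues of Lemmas \ref{lem:uniquely_determined_position} and \ref{lem:reversed_subwords}, asserting that a sufficiently long subword of $\mathbf{t}$ occurs at a position that is unique modulo a power of $2$; these follow from the substitution $0 \mapsto 01$, $1 \mapsto 10$ and its aligned-block decomposition in the same way as for $\mathbf{r}$. The extremal progression at $d = 2^{k}-1$ with $k$ even, realizing the value $d+5$, can be exhibited explicitly in the spirit of the progression through $n = 2^{2k}+d$ used in the proof of Theorem \ref{thm:main3}: one runs through indices of the form $2^{2k}+(i+1)d$, uses the $s_2$-analogue of Proposition \ref{prop:special_case_1} to keep the digit-sum parity constant, and verifies that both flanking indices flip the parity, so the length is exactly $d+5$ and cannot be prolonged. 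Heuristically this matches the relation $A_{\mathbf{t}}(0,2^{k}-1) = d+4$ from Theorem \ref{thm:MSS11}, with the free starting point contributing exactly one extra term.

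The main obstacle is uniformity in $d$. For the special forms $d = 2^{k}\pm 1$ the carry pattern $\{\,i : l_i \ge 2^{\ell}-d\,\}$ is periodic and transparent, which is why closed-form answers are available; but for an arbitrary odd $d$ this pattern is dictated by the full binary expansion of $d$ and splits into a proliferation of cases indexed by prefixes and suffixes of $(d)_2$, precisely the situation the admissible-pair algorithm of Section \ref{sec:main1} was built to automate. The difficulty is that that algorithm is tailored to progressions starting at $0$, whereas here the starting point is free, so one must first establish the rigidity lemma above to reduce an arbitrary progression to a canonical one and only then run a (necessarily computer-assisted) case analysis. I expect that controlling the carries uniformly over all $d$, and in particular proving that no $d$ outside $\{\,2^{k}-1 : k \text{ even}\,\}$ attains the bound, will be the crux; the remaining digit-counting steps should be a careful but routine adaptation of the arguments already developed in this paper.
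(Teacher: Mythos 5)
You are attempting to prove Conjecture \ref{con:TM_bound} --- the paper offers no proof of this statement, only numerical evidence and the previously known special cases, so your proposal must stand on its own; as written it is a research programme, not a proof. The reductions you do carry out are correct but are exactly the known boundary of the problem: $A_{\mathbf{t}}(2d) = A_{\mathbf{t}}(d)$ restricts attention to odd $d$, Theorem \ref{thm:Par17} settles $d = 2^{\ell}-1$ exactly (yielding the equality case $d+5$ for even $\ell$) and disposes of $d = 2^{\ell}-3$ strictly, leaving odd $d$ with $2^{\ell-1} \leq d \leq 2^{\ell}-5$ open. Everything you then propose for that range rests on two transfers from the Rudin--Shapiro arguments that you assert ``follow in the same way,'' and both are in fact false for Thue--Morse.

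Concretely: first, the analogue of Lemma \ref{lem:uniquely_determined_position} fails for $\mathbf{t}$. Its proof for $\mathbf{r}$ hinges on the four-letter alphabet $\Sigma_{\mathbf{r}}$, in which $\rho^t(00), \rho^t(11)$ occur only at even block positions and $\rho^t(01), \rho^t(10)$ only at odd ones; the Thue--Morse morphism $\mu(0)=01$, $\mu(1)=10$ has a two-letter alphabet with no such parity constraint. For instance $w = \mu^{t+1}(0)$ occurs as a $2^{t+1}$-aligned block at position $2^{t+1}n$ for \emph{every} $n$ with $t_n = 0$, and taking consecutive such $n$ (e.g.\ $n=5,6$) gives occurrences at both residues $0$ and $2^{t+1}$ modulo $2^{t+2}$, so positions of long aligned factors are not unique modulo a higher power of $2$. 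This destroys the rigidity step that anchors your planned Cases I--IV reduction. Second, the analogue of Lemma \ref{lem:common_subwords} fails even more thoroughly: since $t_{2n+1} = \overline{t_n}$, your auxiliary sequence is $\overline{\mathbf{t}}$, and $\Sub(\mathbf{t})$ is closed under negation, so $\mathbf{t}$ and its odd subsequence share arbitrarily long common subwords; the mechanism that kills Cases II--III in the proofs of Theorems \ref{thm:main2} and \ref{thm:main3} (the bound $|w| \leq 14$ on common factors of $\mathbf{r}$ and $\mathbf{s}$) simply has no counterpart. The ``cleaner'' identity you count as an advantage is precisely what makes the Section \ref{sec:main2_main3} method inapplicable. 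Finally, even granting repaired lemmas, that method as executed handles only $d = 2^k \pm 1$, and the algorithm of Section \ref{sec:main1} only treats progressions starting at $0$; the uniform-in-$d$, free-starting-point case analysis you sketch --- which is the entire open content of the conjecture --- remains unexecuted, with no termination guarantee for its computer-assisted part.
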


\begin{con} \label{con:RS_bound}
For all $d \geq 23$ we have
$$ A_{\mathbf{r}}(d) \leq \frac{1}{2}(d+7). $$
Moreover, equality occurs if and only if $d=2^k-1$ for some odd $k \geq 5$.
\end{con}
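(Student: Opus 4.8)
The plan is to reduce to odd $d$ and then adapt the case analysis used to prove Theorems \ref{thm:main2} and \ref{thm:main3}, since the method of Section \ref{sec:main1} only controls progressions starting at $0$, whereas here we need to bound $A_{\mathbf{r}}(n,d)$ for every $n$. For the reduction I would first establish the identity $A_{\mathbf{r}}(2d) = A_{\mathbf{r}}(d)$, the Rudin--Shapiro analogue of the relation $A_{\mathbf{t}}(2d) = A_{\mathbf{t}}(d)$. Splitting a progression of difference $2d$ according to the parity of its starting index and using $r_{2n} = r_n$ together with $r_{2n+1} = s_n$ (see Proposition \ref{prop:interesting_property}), one sees that $A_{\mathbf{r}}(2d) = \max\{A_{\mathbf{r}}(d), A_{\mathbf{s}}(d)\}$, where $A_{\mathbf{s}}(d)$ denotes the maximal length of a monochromatic progression of difference $d$ in $\mathbf{s}$. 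Since monochromaticity is preserved under reversal and Lemma \ref{lem:reversed_subwords} shows that $\Sub(\mathbf{s})$ consists exactly of the reversals of subwords of $\mathbf{r}$, a monochromatic progression of difference $d$ in $\mathbf{s}$ reverses to one of the same difference and length in $\mathbf{r}$; hence $A_{\mathbf{s}}(d) = A_{\mathbf{r}}(d)$ and therefore $A_{\mathbf{r}}(2d) = A_{\mathbf{r}}(d)$. Because $\tfrac12(d+7)$ is an integer only when $d$ is odd, this simultaneously reduces the upper bound to odd $d$ and shows that no even $d$ can produce equality (a finite range of small even $d$, together with the threshold $d \geq 23$, being checked directly).

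For odd $d$, set $\ell = \ell(d)$ and, exactly as in Theorems \ref{thm:main2} and \ref{thm:main3}, fix $n$ with $r_n = 0$ (legitimate by Lemma \ref{lem:negation}) and write $n = 2^{\ell} m + l$ with $0 \le l < 2^{\ell}$. The idea is to split the range of $l$ into subintervals (the analogue of the four cases there) and in each one use the decomposition \eqref{eq:RS_identity} to write $r_{n+id} \equiv (\text{high part in } m) + (\text{low part in } l) \pmod 2$ over a window of roughly $d/2$ consecutive $i$. A monochromatic run then forces the forward word $r_{m}r_{m+1}\cdots$ (or its companion $s_m s_{m+1}\cdots$) to agree with a word read off from the low part. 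Invoking Lemma \ref{lem:common_subwords} (common subwords of $\mathbf{r}$ and $\mathbf{s}$ have length at most $14$) kills the run quickly in the ``mixed'' cases, while in the remaining cases Lemma \ref{lem:uniquely_determined_position} pins the high part $m$ down to a single residue class, after which one reads the exact breaking position directly from the binary expansions of $n + id$. Matching the resulting length against $\tfrac12(d+7)$ should single out $d = 2^{\ell}-1$ with $\ell$ odd as the unique equality case, recovering the value $2^{\ell-1}+3$ from Theorem \ref{thm:main3}.

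The hard part will be that, for a general odd $d$, adding $d$ does not act cleanly on the pair $(m,l)$: unlike the special differences $2^k \pm 1$, where one coordinate simply increments and the other increments or decrements, a general $d$ induces carries between the low and high parts that depend on the entire binary expansion of $d$. Controlling these carries --- so that the correspondence between the progression and subwords of $\mathbf{r}$ and $\mathbf{s}$ remains exact over a window of length about $d/2$ --- is the central difficulty. I expect this to require a finite but intricate case distinction on a prefix and a suffix of $(d)_2$, in the same spirit as the admissible-pair method of Section \ref{sec:main1}, organised so that within each branch the carry pattern is constant and the substitution lemmas apply verbatim. A computer-assisted search would then discharge the finitely many exceptional short prefixes and suffixes and the small values of $d$, simultaneously confirming that the bound can fail below the threshold (for instance $A_{\mathbf{r}}(7) = 9 > 7$) and that $d \geq 23$ is exactly the point past which the only progressions violating strict inequality are those with $d = 2^k-1$, $k \geq 5$ odd.
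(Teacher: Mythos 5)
You are attempting to prove Conjecture \ref{con:RS_bound}; the paper itself offers no proof of this statement, only experimental evidence (values of $A_{\mathbf{r}}(d)$ computed for $d \leq 2^{12}+1$), so there is nothing to match your attempt against — and your proposal, as you partly concede, is not a proof either, but a reduction plus a plan whose central step is missing. What does go through: the reduction $A_{\mathbf{r}}(2d)=A_{\mathbf{r}}(d)$ is sound and is in fact Proposition \ref{prop:d_odd} of the paper, proved essentially as you sketch it (via $A_{\mathbf{r}}(2d)=\max\{A_{\mathbf{r}}(d),A_{\mathbf{s}}(d)\}$ and $A_{\mathbf{s}}(d)=A_{\mathbf{r}}(d)$, the latter from Lemma \ref{lem:reversed_subwords}); your handling of even $d$ via the odd part is also fine as a finite check. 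Likewise the ``if'' half of the equality claim is genuinely available: Theorem \ref{thm:main3} gives $A_{\mathbf{r}}(2^k-1)=2^{k-1}+3=\tfrac{1}{2}(d+7)$ for odd $k \geq 5$.

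The gap is in your second paragraph, which silently assumes that the decomposition \eqref{eq:RS_identity} converts a monochromatic run into an equality of \emph{contiguous} subwords of $\mathbf{r}$ or $\mathbf{s}$ — and that is exactly what fails for general odd $d$. Writing $n = 2^{\ell}m + l$ with $\ell = \ell(d)$, one has $2^{\ell-1} \leq d < 2^{\ell}$, so each step adds $d$ to the low coordinate modulo $2^{\ell}$: the values $l + id \bmod 2^{\ell}$ are far from consecutive, the high coordinate jumps by $1$ or $2$ depending on wrap-around, and no window of length about $d/2$ produces a word $r_m r_{m+1}\cdots$ compared against anything readable from the low part. Lemmas \ref{lem:uniquely_determined_position} and \ref{lem:common_subwords} are statements about contiguous subwords and give no purchase on values of $\mathbf{r}$ sampled along a progression in the low coordinate; this is precisely why the paper can only treat $d = 2^k \pm 1$, where $(m,l)$ evolves by unit steps. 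Your fallback — a finite prefix/suffix case distinction in the spirit of Section \ref{sec:main1} — does not transfer either: the admissible-pair method crucially exploits the start at $0$, relating the expansions of $nd$, $jd$ and $(2^{\ell-i}j+1)d$ through a single binary addition, and bounding $A_{\mathbf{r}}(n,d)$ uniformly in $n$ admits no such identity; you give no argument that finitely many branches with constant carry pattern suffice, and nothing in the paper suggests they do. Absent that step, the best general bound available is Theorem \ref{thm:general_bound}, which for odd $d$ yields only $A_{\mathbf{r}}(d) \leq 2^{\ell(d)+1}$, roughly $4d$ — far above $\tfrac{1}{2}(d+7)$. The conjecture therefore remains open, and your proposal does not close it.
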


Problems \ref{prob:limit_points} and \ref{prob:upper_bound} seem quite hard to solve even for fixed $v$.
We believe that it is easier to find an upper bound for $A_v(d)$ in the flavor of Theorem \ref{thm:Par17}. 

\begin{prob}
Compute $\max_{1 \leq d < 2^k} A_v(d)$ and determine $d$ for which this maximum is reached.
\end{prob}

For example, in the case of the Rudin--Shapiro sequence Theorem \ref{thm:main2} together with Conjecture \ref{con:RS_bound} (if valid) would imply the following. 

\begin{con}
For all integers $k \geq 5$ we have
$$  \max_{1 \leq d \leq 2^k-2} A_{\mathbf{r}}(d)  \leq  2^{k-1}$$
and
$$   A_{\mathbf{r}}(2^k-1) =  \begin{cases}
2^{k-1} + 1, &\text{if } k \equiv 0 \pmod{2},\\
2^{k-1} + 3 &\text{otherwise}.
\end{cases} $$
\end{con}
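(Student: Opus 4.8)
The plan is to split the two assertions. The second one is not really new: for $k\ge 5$ it is precisely the last two lines of Theorem~\ref{thm:main3}, namely $A_{\mathbf{r}}(2^k-1)=2^{k-1}+3$ for odd $k\ge 5$ and $A_{\mathbf{r}}(2^k-1)=2^{k-1}+1$ for even $k\ge 6$, so nothing further is needed there. The entire difficulty lies in the first display, which I would prove in the equivalent form: \emph{if $d<2^k$ and $d\neq 2^k-1$, then $A_{\mathbf{r}}(d)\le 2^{k-1}$.} I would feed Conjecture~\ref{con:RS_bound} into this and pin down exactly which values it does and does not cover.

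First I would dispose of the bulk of the range. For $k=5$ the claim $\max_{1\le d\le 30}A_{\mathbf{r}}(d)\le 16$ is a finite verification, so I assume $k\ge 6$. For $23\le d\le 2^k-7$ Conjecture~\ref{con:RS_bound} gives $A_{\mathbf{r}}(d)\le\tfrac12(d+7)\le 2^{k-1}$ outright, and the finitely many $d\le 22$, to which the conjecture does not apply, satisfy $A_{\mathbf{r}}(d)\le 16\le 2^{k-1}$ by direct computation. The only $d\neq 2^k-1$ left are the six boundary values $2^k-6,\dots,2^k-2$. One of these is already covered: since $2^k-5$ is not of the form $2^j-1$, the inequality in Conjecture~\ref{con:RS_bound} is strict, so $A_{\mathbf{r}}(2^k-5)<\tfrac12(2^k+2)=2^{k-1}+1$ and hence $A_{\mathbf{r}}(2^k-5)\le 2^{k-1}$.

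For the even boundary values I would use a halving relation. Splitting a progression of even difference $2d'$ according to the parity of its starting term and invoking $r_{2m}=r_m$ and $r_{2m+1}=s_m$ gives
$$ A_{\mathbf{r}}(2d')=\max\bigl(A_{\mathbf{r}}(d'),\,A_{\mathbf{s}}(d')\bigr), $$
where $A_{\mathbf{s}}(d')=\sup_n\inf\{l\ge 1:s_{n+ld'}\neq s_n\}$. By Proposition~\ref{prop:interesting_property} the word $\mathbf{s}$ is block-by-block the reversal of $\mathbf{r}$, and reversal preserves both the difference and the length of a monochromatic progression; making this precise with Lemmas~\ref{lem:reversed_subwords} and~\ref{lem:uniquely_determined_position} should yield the symmetry $A_{\mathbf{s}}(d')=A_{\mathbf{r}}(d')$, whence $A_{\mathbf{r}}(2d')=A_{\mathbf{r}}(d')$. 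Applying this repeatedly gives $A_{\mathbf{r}}(2^k-2)=A_{\mathbf{r}}(2^{k-1}-1)$, $A_{\mathbf{r}}(2^k-4)=A_{\mathbf{r}}(2^{k-2}-1)$ and $A_{\mathbf{r}}(2^k-6)=A_{\mathbf{r}}(2^{k-1}-3)$; the first two are bounded by $2^{k-2}+3$ and $2^{k-3}+3$ via Theorem~\ref{thm:main3}, and the last by $\tfrac12\bigl((2^{k-1}-3)+7\bigr)=2^{k-2}+2$ via Conjecture~\ref{con:RS_bound}. All three are $\le 2^{k-1}$.

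This leaves a single stubborn value for each $k$, the odd difference $d=2^k-3=[1^{k-2}01]_2$, for which Conjecture~\ref{con:RS_bound} yields only $A_{\mathbf{r}}(2^k-3)\le 2^{k-1}+1$. Here I would return to the direct method of Section~\ref{sec:main2_main3}: writing $n=2^km+l$ and expanding $r_{n+id}$ through \eqref{eq:RS_identity}, one finds that the high part runs through $r_{m+i}$ or $s_{m+i}$ in steps of $1$ while the low part runs through $r_{l-3i}$, that is, along an arithmetic progression of difference $3$ inside a $2^{k-1}$-aligned block. This is the crux, and the reason the statement is still only conjectural: the subword lemmas of Section~\ref{sec:main2_main3}, especially Lemma~\ref{lem:common_subwords} together with the position-rigidity of Lemma~\ref{lem:uniquely_determined_position}, describe \emph{contiguous} runs, whereas here one must control the interaction between a step-$1$ sequence and a step-$3$ progression in $\mathbf{r}$, and it is exactly this coupling (the second-order content of Problem~\ref{prob:upper_bound}) that Conjecture~\ref{con:RS_bound} does not supply. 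A fully unconditional proof would most naturally bypass the conjecture and establish the uniform bound for all $d<2^k$, $d\neq 2^k-1$ by pushing the aligned-block analysis through for a general difference; the main obstacle is that for general $d$ the map $n=2^km+l\mapsto n+id$ no longer advances the two coordinates independently, so the carries between them must be tracked uniformly over all residues $l$.
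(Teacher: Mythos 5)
There is no ``paper's own proof'' to compare against here: the statement you were given is a \emph{conjecture}, and the paper offers only the passing remark that Theorem \ref{thm:main2} (the relevant result for the second display is really Theorem \ref{thm:main3}) together with Conjecture \ref{con:RS_bound}, if valid, would imply it. Judged against that remark, your conditional reduction is sound and in fact more careful than the paper's. You correctly note that the second display is exactly Theorem \ref{thm:main3} restricted to $k \geq 5$, hence proven. Your treatment of the first display is arithmetically correct throughout: the range $23 \leq d \leq 2^k-7$ follows from Conjecture \ref{con:RS_bound} since $\frac{1}{2}(d+7) \leq 2^{k-1}$; small $d$ are a finite check (decidable by the machinery of Section \ref{sec:calculation}); $d = 2^k-5$ follows from the strict form of the conjecture; and the even boundary values reduce to smaller odd differences via $A_{\mathbf{r}}(2d) = A_{\mathbf{r}}(d)$. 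One economy: you need not rederive that halving relation --- it is Proposition \ref{prop:d_odd}, proved in the paper by exactly the parity splitting and the symmetry $A_{\mathbf{s}}(d) = A_{\mathbf{r}}(d)$ (via Lemma \ref{lem:reversed_subwords}) that you sketch.

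Your most valuable contribution is the leak you isolate at $d = 2^k-3$. Since $2^k-3$ is odd and not of the form $2^j-1$ for $k \geq 3$, the strict inequality in Conjecture \ref{con:RS_bound} yields only $A_{\mathbf{r}}(2^k-3) \leq \frac{1}{2}(2^k+2) = 2^{k-1}+1$, which exceeds the conjectured maximum $2^{k-1}$ by one. This observation is correct, and it shows that the paper's one-line claim that Conjecture \ref{con:RS_bound} ``would imply'' the displayed bound is itself slightly too quick at this single boundary value: closing it would require extra input, such as pushing the aligned-block analysis of Section \ref{sec:main2_main3} through for a low part advancing in steps of $3$, as you describe. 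In short, your proposal is not a proof --- nor could any blind attempt be, since the statement is open --- but as a conditional reduction it matches the paper's intended route, sharpens it, and honestly pinpoints the one difference, $d = 2^k-3$, where the reduction genuinely fails.
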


We move on to some miscellaneous problems. The first one concerns a possible generalization of the relation $A_0(d) = A_{\mathbf{t}}(d)$ and is related to the supposed equality \eqref{eq:limsup}.
\begin{prob}
Describe the set of $d$ such that $A_v(d) = A_{\overline{v}}(d)$.
\end{prob}
According to our computations (for $|v| \leq 3$), when $v \not\in \{00,11,000,111\}$ the equality $A_v(d) = A_{\overline{v}}(d)$ apparently holds for all $d \geq 1$. One might suspect that this pattern continues, namely $A_v(d) = A_{\overline{v}}(d)$ for all $d$ when $v$ is not of the form $0^i$ or $1^i$.   

Next, recall the equality $A_{\mathbf{t}}(2d) = A_{\mathbf{t}}(d)$, which implies $A_0(2d) = A_0(d)$ by Proposition \ref{prop:A_0=A_t}. It turns out that a similar relation is true for the Rudin--Shapiro sequence.
\begin{prop} \label{prop:d_odd}
For all $d \geq 1$ we have $A_{\mathbf{r}}(2d) = A_{\mathbf{r}}(d)$.
\end{prop}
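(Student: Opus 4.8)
The plan is to reduce the claim about difference $2d$ to a statement relating $\mathbf{r}$ and its subsequence $\mathbf{s} = (s_n)_{n\ge 0}$, $s_n = r_{2n+1}$, and then exploit the reversal duality of Lemma \ref{lem:reversed_subwords}. First I would split a monochromatic progression of difference $2d$ according to the parity of its starting index $n$. Using the identity $r_{2k}=r_k$, an even start $n=2m$ gives $r_{2m+l\cdot 2d}=r_{m+ld}$, so $A_{\mathbf{r}}(2m,2d)=A_{\mathbf{r}}(m,d)$, while an odd start $n=2m+1$ gives $r_{(2m+1)+l\cdot 2d}=r_{2(m+ld)+1}=s_{m+ld}$, so $A_{\mathbf{r}}(2m+1,2d)=A_{\mathbf{s}}(m,d)$, where $A_{\mathbf{s}}(m,d)=\inf\{l\ge 1:\ s_{m+ld}\ne s_m\}$. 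Taking the supremum over all $n$ and writing $A_{\mathbf{s}}(d)=\sup_m A_{\mathbf{s}}(m,d)$, this yields
$$ A_{\mathbf{r}}(2d) = \max\{A_{\mathbf{r}}(d),\, A_{\mathbf{s}}(d)\}. $$
Since the even-start case already gives $A_{\mathbf{r}}(2d)\ge A_{\mathbf{r}}(d)$, the whole proposition reduces to the single equality $A_{\mathbf{s}}(d)=A_{\mathbf{r}}(d)$.

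The heart of the argument is that a monochromatic progression of difference $d$ can be read off from a single factor (window) of the sequence, and that reversing such a window interchanges $\mathbf{r}$ and $\mathbf{s}$ while preserving the progression. Concretely, suppose $r_n=r_{n+d}=\cdots=r_{n+(L-1)d}=c$ is a monochromatic progression with $L$ terms, and let $W=r_n r_{n+1}\cdots r_{n+(L-1)d}$ be the corresponding factor of $\mathbf{r}$, of length $N=(L-1)d+1$. By Lemma \ref{lem:reversed_subwords} we have $W^R\in\Sub(\mathbf{s})$, say $W^R=s_m s_{m+1}\cdots s_{m+N-1}$. The reversal sends position $j$ to $N-1-j=(L-1)d-j$, and the set of progression positions $\{0,d,2d,\ldots,(L-1)d\}$ is invariant under $j\mapsto (L-1)d-j$; moreover the entry of $W^R$ at position $kd$ equals the entry of $W$ at position $(L-1-k)d$, which is $r_{n+(L-1-k)d}=c$. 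Hence $s_{m+kd}=c$ for $k=0,1,\ldots,L-1$, so $\mathbf{s}$ contains a monochromatic progression of difference $d$ with $L$ terms, giving $A_{\mathbf{s}}(d)\ge A_{\mathbf{r}}(d)$. Applying the same reasoning with the roles of $\mathbf{r}$ and $\mathbf{s}$ exchanged (replacing $w$ by $w^R$ in Lemma \ref{lem:reversed_subwords} gives $w\in\Sub(\mathbf{s})\iff w^R\in\Sub(\mathbf{r})$) yields the reverse inequality, so $A_{\mathbf{s}}(d)=A_{\mathbf{r}}(d)$ and the proposition follows.

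The main obstacle, and the step I would write most carefully, is the window-reversal step: one must verify that the index set of the progression is symmetric under the reversal of the window and that the common color is preserved, so that a forward monochromatic progression in $\mathbf{r}$ becomes a (backward-read, hence again forward) monochromatic progression of the \emph{same} difference and length in $\mathbf{s}$. A minor point to address is finiteness: by Theorem \ref{thm:general_bound} all the quantities $A_{\mathbf{r}}(d)$ and $A_{\mathbf{s}}(d)$ are finite, so the supremum manipulations and the maximum above are legitimate, although the identity $A_{\mathbf{s}}(d)=A_{\mathbf{r}}(d)$ would persist in the extended reals even without this.
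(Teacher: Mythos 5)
Your proof is correct and follows essentially the same route as the paper: where the paper decomposes $n = 2^j m + l$ and applies the identity \eqref{eq:RS_identity} to get $A_{\mathbf{r}}(n,2^jd) \leq \max\{A_{\mathbf{r}}(m,d), A_{\mathbf{s}}(m,d)\}$, you run the $j=1$ parity split with the exact equalities $A_{\mathbf{r}}(2m,2d)=A_{\mathbf{r}}(m,d)$ and $A_{\mathbf{r}}(2m+1,2d)=A_{\mathbf{s}}(m,d)$, which is only a cosmetic difference. Your careful window-reversal verification of $A_{\mathbf{s}}(d)=A_{\mathbf{r}}(d)$ — checking that the progression's index set $\{0,d,\ldots,(L-1)d\}$ is invariant under reversal of the window — supplies exactly the detail the paper leaves implicit when it asserts this equality as ``a consequence of Lemma \ref{lem:reversed_subwords}.''
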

\begin{proof}
It is sufficient to show the for any fixed odd $d$ and $j \geq 1$ we have $A_{\mathbf{r}}(2^j d) = A_{\mathbf{r}}(d)$. First, for any $n,i \in \N$ we have $r_{n+id} = r_{2^jn + i \cdot 2^jd}$, which yields $A_{\mathbf{r}}(2^j d) \geq A_{\mathbf{r}}(d)$.

Conversely, take any $n \in \N$ and write $n = 2^j m + l$, where $m,l \in \N$ and $l < 2^j$. Then for any $i \in \N$ we have
$$ r_{n+i\cdot 2^jd} \equiv \begin{cases}
r_l + r_{m+id}  \pmod{2}  &\text{if } l < 2^{j-1}, \\
r_l + s_{m+id} \pmod{2}  &\text{if } l \geq 2^{j-1}
\end{cases}    $$
(recall that $\mathbf{s} =  (r_{2n+1})_{n \geq 0}$). Therefore,
$$  A_{\mathbf{r}}(n,2^jd) \leq \max\{A_{\mathbf{r}}(m,d), A_{\mathbf{s}}(m,d)\} \leq \max\{A_{\mathbf{r}}(d), A_{\mathbf{s}}(d)\} = A_{\mathbf{r}}(d),$$
where we used $A_{\mathbf{r}}(d) =  A_{\mathbf{s}}(d)$, a consequence of Lemma \ref{lem:reversed_subwords}. The result follows by taking the supremum over $n$.
\end{proof}

It seems interesting to ask whether a similar relation holds for any $v$.
\begin{prob} \label{prob:odd_d}
Describe the set of $d$ such that $A_v(2d) = A_v(d)$.
\end{prob}

Based on the calculations, $v=01,10$ are the only other patterns of length at most $3$ such that the equality holds for all $d$. Nevertheless, in all cases we have $A_v(2d) \geq A_v(d)$ with equality for most $d$.

The same question can be asked for the values $A_v(0,d)$.
\begin{prob} 
For $v$ ending with a $0$ describe the set of $d$ such that $A_v(0,2d) = A_v(0,d)$.
\end{prob}
We exclude $v$ ending with a $1$ as then $A_v(0,2d) = A_v(0,d)$ follows immediately from the fact that $e_v(2n) =  e_v(n)$. The case when $v$ ends with a $0$ seems to be nontrivial and the considered equality holds for most but not all $d$.

Most of the results in this paper concern the growth of $A_v(0,d)$ and $A_v(d)$ with particular attention paid to specific large terms. However, it is also interesting to look at the distribution of all the values $A_v(0,d)$ and $A_v(d)$.

\begin{prob}
Describe the distribution of the values $A_v(0,d)$ and $A_v(d)$.
\end{prob}

Here it is easy to see that the similarity in behavior of large terms of the sequences $(A_v(0,d))_{d \geq 1}$ and $(A_v(d))_{d \geq 1}$ does not carry over to the whole sequences. In Figures \ref{fig:RS_distribution1} and \ref{fig:RS_distribution2} below we provide histograms for the example sequences $(A_{\mathbf{r}}(0,d))_{d \geq 1}$ and $(A_{\mathbf{r}}(d))_{d \geq 1}$. For better readability, the (rare) large values $A_v(0,d)$ and $A_v(d)$ are omitted.

\begin{figure}[h!]  
  \includegraphics[width=0.75\textwidth]{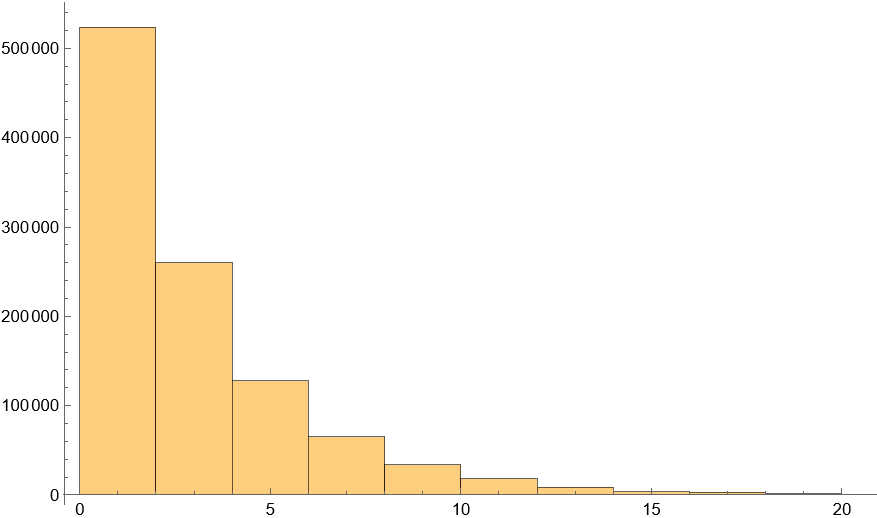}
  \caption{\label{fig:RS_distribution1}The distribution of $A_{\mathbf{r}}(0,d)$}
\end{figure}

\begin{figure}[h!]
    \includegraphics[width=0.75\textwidth]{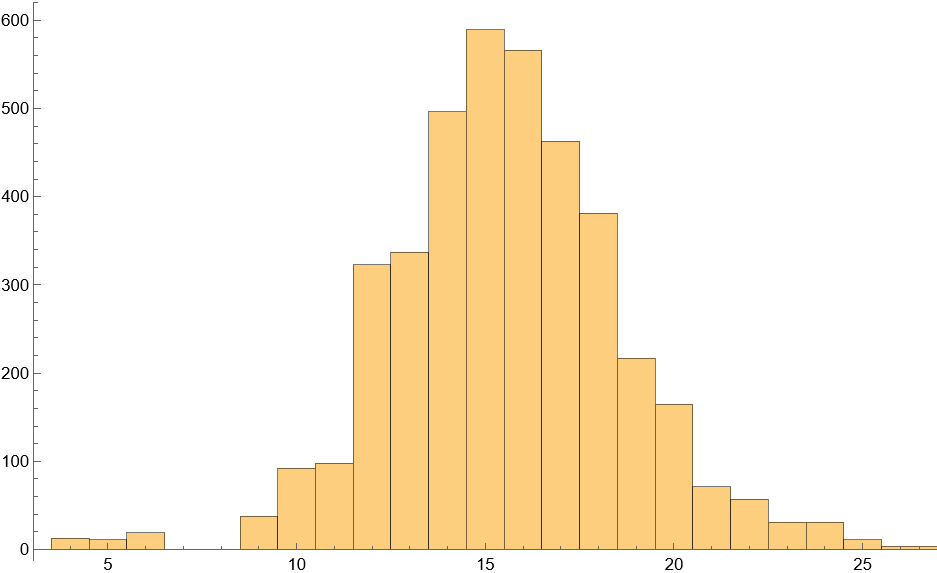}
    \caption{\label{fig:RS_distribution2}The distribution of $A_{\mathbf{r}}(d)$}
\end{figure}

The distribution of the values $A_{\mathbf{r}}(0,d)$ resembles the geometric distribution with success probability $1/2$ (after proper scaling). This phenomenon may be explained by the following simple heuristic. For roughly half of the values $d$ we have $r_d =1$, and thus $A_{\mathbf{r}}(0,d)=1$. Assuming independence of $r_d$ and $r_{3d}$, for the remaining values $d$ around half of the time we should have $r_{3d} =1$, implying $A_{\mathbf{r}}(0,d)=3$, and so on. On the other hand, the distribution of $A_{\mathbf{r}}(d)$ seems approximately normal.

A related problem is the following.

\begin{prob}
Determine the values attained by the sequences $(A_v(0,d))_{d \geq 1}$ and $(A_v(d))_{d \geq 1}$.
\end{prob}

It appears that $A_v(0,d)$ takes on all positive odd values and finitely many even values (none if $v$ ends with a $1$). At the same time, $A_v(d)$ attains almost all integer values in the interval $[10,30]$ for each considered $v$. We expect that the set of all values of $A_v(d)$ may have asymptotic density $1$.

Since the sequences $(A_v(0,d))_{d \geq 1}$ and $(A_v(d))_{d \geq 1}$ describe binary expansions of integers one may wonder whether they are $2$-regular (in the sense of Allouche and Shallit \cite{AS92}). We expect that the general answer is negative.

\begin{prob}
Are the sequences $(A_v(0,d))_{d \geq 1}$ and $(A_v(d))_{d \geq 1}$ $2$-regular? 
\end{prob}

Finally, we can generalize our setting and study the values $e_{b,v}(n)$ modulo $m$, where $e_{b,v}(n)$ counts the occurrences of a pattern $v$ in the base-$b$ expansion of $n$, and $m \geq 2$ is an integer. It is interesting to ask whether similar results hold for monochromatic arithmetic progressions in $m$-colorings of $\N$ induced by these sequences. 
\begin{prob}
Generalize the study to other bases and moduli.
\end{prob}

\section{The calculation of $A_v(d)$}  \label{sec:calculation}

In this section we discuss how the values $A_v(d)$ can be calculated for a general $v$. Let us denote $g_v(n) = e_v(n) \bmod{2}$ and $\mathbf{g}_v = (g_v(n))_{n \geq 0}$. Our approach is an extension of the method used to calculate particular values of $A_{\mathbf{r}}(d)$ in the proofs of Theorems \ref{thm:main2} and \ref{thm:main3}. There are certain differences depending on whether or not $v$ contains a $1$. We now describe the case when $v$ contains a $1$ and at the end of the section highlight the modifications that need to be made when $v=0^i$.

The first step is to determine a substitution such that $\mathbf{g}_v$ is its fixed point. To this end, write $\mathbf{g}_v$ as a concatenation of $2^{|v|-1}$-aligned blocks:
$$ \mathbf{g}_v = G_v(0) G_v(1) G_v(2) \cdots,  $$
where $G_v(n) = g_v(2^{|v|-1}n) \cdots g_v(2^{|v|-1}(n+1)-1)$ for each $n \in \N$. Also let 
$$ \Sigma_v = \{G_v(n): n \in \N   \}. $$
In this way we can identify $\mathbf{g}_v$ with the infinite word $\mathbf{G}_v = (G_v(n))_{n \geq 0}$ over the alphabet $\Sigma_v$.  In the following proposition we give the desired description.

\begin{prop} \label{prop:g_v_substitution}
For any $v\in\{0,1\}^*$ which contains a $1$ we have the following.
\begin{enumerate}[label={\textup{(\roman*)}}]
\item $\Sigma_v = \{G_v(l), \overline{G_v(l)}: 0 \leq l < 2^{|v|-1}\}$.
\item The word $\mathbf{g}_v$ is a fixed point of a substitution $\gamma_v \colon \Sigma_v^* \to \Sigma_v^*$ defined by $$\gamma_v(G_v(n)) = G_v(2n) G_v(2n+1).$$
\item $\gamma_v(\overline{G}) = \overline{\gamma_v(G)}$ for any $G \in \Sigma_v$.
\end{enumerate}
\end{prop}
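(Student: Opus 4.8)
The whole proposition will rest on a single ``digit-appending'' identity for $g_v$. Write $k = |v|$ and let $i \geq 0$ be the number of leading zeros of $v$. For $0 \leq l < 2^{k-1}$ let $\beta(l) \in \{0,1\}^{k-1}$ be the representation of $l$ on exactly $k-1$ digits, and for $m \in \N$ set $\tau(m) = \beta(m \bmod 2^{k-1})$, which is the length-$(k-1)$ suffix of $0^{i'}(m)_2$ for any sufficiently large $i' \geq i$ (prepending extra zeros beyond the convention does not change $e_v$, since $v$ contains a $1$, so $\tau(m)$ is well defined). First I would prove the single-step relation
$$ g_v(2m+b) \equiv g_v(m) + |\tau(m)\,b|_v \pmod{2}, \qquad b \in \{0,1\}. $$
This follows from $(2m+b)_2 = (m)_2 b$: an occurrence of $v$ in $0^{i'}(m)_2 b$ either avoids the final digit, giving the $e_v(m)$ occurrences already present, or ends exactly at it, and the latter happens precisely when the last $k$ symbols $\tau(m)b$ equal $v$, i.e.\ when $|\tau(m)b|_v = 1$. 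Iterating along the $k-1$ digits of $\beta(l)$ then yields the master formula
$$ g_v(2^{k-1}m + l) \equiv g_v(m) + |\tau(m)\,\beta(l)|_v \pmod{2}, \qquad 0 \leq l < 2^{k-1}, $$
since $|\tau(m)|_v = 0$ (as $k-1 < k$) and the occurrences created while appending $\beta(l)$ are exactly those of $v$ straddling into the appended block. The main obstacle here is entirely bookkeeping: justifying the boundary decomposition together with the leading-zero convention and the harmlessness of extra leading zeros. Once the master formula is available, all three parts are short.

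The master formula shows that $G_v(m)$ is determined by the pair $(g_v(m), \tau(m)) \in \{0,1\} \times \{0,1\}^{k-1}$, and that flipping the parity bit $g_v(m)$ negates every entry of the block, turning $G_v(m)$ into $\overline{G_v(m)}$. For part (i), given $m$ put $l_0 = m \bmod 2^{k-1} \in \{0,\dots,2^{k-1}-1\}$; then $\tau(m) = \tau(l_0) = \beta(l_0)$, so $G_v(m)$ equals $G_v(l_0)$ or $\overline{G_v(l_0)}$ according as $g_v(m) = g_v(l_0)$ or not. This gives $\Sigma_v \subseteq \{G_v(l), \overline{G_v(l)} : 0 \leq l < 2^{k-1}\}$. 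For the reverse inclusion I would exhibit, for each $l$, an index $m \equiv l \pmod{2^{k-1}}$ with $e_v(m) \not\equiv e_v(l) \pmod 2$: prepend to $\beta(l)$ a high-order block, separated from the low $k-1$ digits by a buffer $0^{k}$ (which blocks straddling occurrences, as $|v| = k$), chosen to flip the parity of $e_v$; this is possible because $v$ contains a $1$ (it suffices to take a high block in which $v$ occurs exactly once). Then $\tau(m) = \beta(l)$ with $g_v(m) \neq g_v(l)$, whence $G_v(m) = \overline{G_v(l)} \in \Sigma_v$.

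For part (ii) I would read off from the single-step relation how the parameters evolve under doubling: $\tau(2m)$ and $\tau(2m+1)$ arise from $\tau(m)$ by deleting its leading digit and appending $0$ resp.\ $1$, while $g_v(2m+b) \equiv g_v(m) + |\tau(m)b|_v \pmod 2$. Hence both $(g_v(2m), \tau(2m))$ and $(g_v(2m+1), \tau(2m+1))$ are functions of $(g_v(m), \tau(m))$ alone, so $G_v(2m)$ and $G_v(2m+1)$ depend only on $G_v(m)$; this is precisely the well-definedness of $\gamma_v(G_v(m)) = G_v(2m) G_v(2m+1)$ on $\Sigma_v$. That $\mathbf{g}_v$ is a fixed point is then formal, since concatenating $\gamma_v(G_v(0))\gamma_v(G_v(1)) \cdots = G_v(0) G_v(1) G_v(2) \cdots = \mathbf{g}_v$.

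Part (iii) uses the same description. By part (i) we may write $\overline{G_v(m)} = G_v(m')$ with $(g_v(m'), \tau(m')) = (1 - g_v(m), \tau(m))$. Since the $\tau$-components of $m$ and $m'$ coincide, the increments satisfy $|\tau(m')b|_v = |\tau(m)b|_v$, so $G_v(2m')$ and $G_v(2m)$ share the same $\tau$-component while their parity bits differ by $1$; that is, $G_v(2m') = \overline{G_v(2m)}$, and likewise $G_v(2m'+1) = \overline{G_v(2m+1)}$. Therefore
$$ \gamma_v(\overline{G_v(m)}) = G_v(2m')G_v(2m'+1) = \overline{G_v(2m)}\,\overline{G_v(2m+1)} = \overline{G_v(2m)G_v(2m+1)} = \overline{\gamma_v(G_v(m))}, $$
which is the desired identity.
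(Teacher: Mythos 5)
Your proof is correct and follows the same architecture as the paper's: a congruence showing that the block $G_v(n)$ is determined by the pair $(g_v(n),\, n \bmod 2^{|v|-1})$, an explicit witness with flipped parity for the reverse inclusion in (i), and well-definedness of $\gamma_v$ together with negation-equivariance read off from the fact that the digit-appending increments depend only on the residue class, not on $n$ itself. One point genuinely in your favor: writing $k=|v|$, your single-step relation $g_v(2m+b) \equiv g_v(m) + |\tau(m)b|_v \pmod{2}$ is the correct general form, whereas the paper's displayed congruences are literally valid only when $v$ begins with a $1$. For $v$ with $i \geq 1$ leading zeros, the convention of prepending $0^i$ contributes occurrences lying inside the low block: for instance with $v=01$, $n=1$, $j=0$ one has $g_v(2)=1$ while $g_v(1)+g_v(2) \equiv 0 \pmod{2}$, so \eqref{eq:g_v_congruence} fails as stated; the missing correction term $g_v(l)$ (resp.\ $g_v(j)$ in \eqref{eq:g_v_congruence2}--\eqref{eq:g_v_congruence3}) depends only on $l$ (resp.\ $j$), so all of the paper's conclusions survive, and your formulation sidesteps the slip entirely. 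The only loose stitch on your side is in part (i): what must be odd is $|0^i H 0^k|_v$, counting occurrences that use the prepended zeros or straddle into the $0^k$ buffer, not merely the number of occurrences of $v$ inside $H$; but taking $H$ to be $v$ with its leading zeros stripped gives exactly one such occurrence (a second occurrence would have to contain all the $1$'s of $H$ and match the position of the first $1$ of $v$, forcing it back to position $0$), so the parenthetical fix you indicate does complete the argument.
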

\begin{proof}
We start with (i). Choose $n \in \N$ and write $n = 2^{|v|-1} m + l$, where $0 \leq l < 2^{|v|-1}$. Then for each $j = 0,1,\ldots,2^{|v|-1}-1$ we have
\begin{equation} \label{eq:g_v_congruence}
g_v(2^{|v|-1}n + j) \equiv g_v(n) + g_v(2^{|v|-1}l + j) \pmod{2}.
\end{equation}
Therefore, either $G_v(n) = G_v(l)$ or $G_v(n) = \overline{G_v(l)}$. 

The other inclusion follows immediately for the blocks $G_l$. To show that $\overline{G_v(l)} \in \Sigma_v$, it is sufficient to take any $n \in \N$ such that $n \equiv l \pmod{2^{|v|-1}}$ and $g_v(n)=1$, and use \eqref{eq:g_v_congruence}. A suitable choice is for example $n = 2^{2|v|-1} [v]_2+l = [v0^{|v|}(l)_2]_2$ if $v$ begins with a $1$, and $n = 2^{2|v|-1} [v]_2+ 2^{|v|-1}(2^{|v|}-1)+l=[v1^{|v|}(l)_2]_2$ otherwise.

In order to prove (ii) note that for all $j =0,1,\ldots, 2^{|v|-1}-1$ we have
\begin{align}
g_v(2^{|v|}n + 2j) &\equiv  g_v(2^{|v|-1}n + j) + g_v(2j) \pmod{2}, \label{eq:g_v_congruence2} \\
g_v(2^{|v|}n + 2j+1) &\equiv  g_v(2^{|v|-1}n + j) + g_v(2j+1) \pmod{2}.  \label{eq:g_v_congruence3}
\end{align}
If we assume that $G_v(n) = G_v(m)$, then the right-hand side of \eqref{eq:g_v_congruence2} and \eqref{eq:g_v_congruence3} remains the same after replacing $n$ by $m$, which implies $G_v(2n) G_v(2n+1) = G_v(2m) G_v(2m+1)$.

Part (iii) follows directly from the congruences   \eqref{eq:g_v_congruence2} and \eqref{eq:g_v_congruence3}.
\end{proof}

Having this result, the substitution $\gamma_v$ can be effectively obtained by generating sufficiently many terms of $\mathbf{g}_v$.

Now, a monochromatic arithmetic progression in $\mathbf{g}_v$ of length $l$ and difference $d$ is contained in a subword of length $(l-1)d+1$. At the same time, for $|v| \geq 2$ and all $d \geq 1$ we have the upper bound
\begin{equation} \label{eq:a_priori_bound}
A_v(d) \leq  2^{\ell(d)+|v|-\nu_2(d)-1},
\end{equation}
which follows from Theorem \ref{thm:general_bound}. The same inequality also holds when $v=1,0$ and $d \neq 2^k-1$ by Theorem \ref{thm:Par17} and the relations $A_{\mathbf{t}}(2d) = A_{\mathbf{t}}(d), A_0(d) = A_{\mathbf{t}}(d)$. Since the values $A_{\mathbf{t}}(2^k-1)$ are already known, we omit this case in the further considerations. As a result, the progression of maximal length has to be contained in a subword of $\mathbf{g}_v$ of length 
$$(2^{\ell(d)+|v|-\nu_2(d)-1}-1)d+1 < 2^{2\ell(d)+|v|-\nu_2(d)-1}.$$
Further, any such subword is contained in a word of the form $\gamma_v^{s}(G_n G_{n+1})$ for some $n \in \N$, where $s = 2 \ell(d) - \nu_2(d)$.
We state this observation as a proposition. Here and in the sequel $\Sub  _2(w)$ denotes the set of length $2$ subwords of a word $w$.
\begin{prop} \label{prop:progressions_in_subwords}
Let $v \in \{0,1\}^*$ contain a $1$. Then for each $d \geq 1$ (except for $d=2^k-1$ in the case $v=1$) the value $A_v(d)$ is equal to the maximal length of a monochromatic arithmetic progression of difference $d$ in the words $\gamma_v^s(GH)$, where $GH \in \Sub_2(\mathbf{G}_v)$ and $s = 2\ell(d)-\nu_2(d)$.
\end{prop}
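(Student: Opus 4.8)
The plan is to establish the two inequalities $A_v(d) \le M$ and $M \le A_v(d)$, where $M$ denotes the maximal length of a monochromatic arithmetic progression of difference $d$ appearing in one of the words $\gamma_v^s(GH)$ with $GH \in \Sub_2(\mathbf{G}_v)$ and $s = 2\ell(d) - \nu_2(d)$. Throughout I would abbreviate $B := 2^{s+|v|-1}$, the common length over $\{0,1\}$ of the image $\gamma_v^s(G)$ of a single letter $G \in \Sigma_v$: this is because $\gamma_v$ is $2$-uniform on $\Sigma_v$ (Proposition~\ref{prop:g_v_substitution}(ii)), so $\gamma_v^s(G)$ consists of $2^s$ letters of $\Sigma_v$, i.e.\ $2^s \cdot 2^{|v|-1} = 2^{s+|v|-1}$ binary digits.

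For the inequality $M \le A_v(d)$, the key observation is that each word $\gamma_v^s(GH)$ is itself a factor of $\mathbf{g}_v$. Indeed, since $\mathbf{G}_v$ is a fixed point of $\gamma_v$, we have $\gamma_v^s(\mathbf{G}_v) = \mathbf{G}_v$, and writing $\mathbf{G}_v = G_v(0)G_v(1)\cdots$ shows that $\gamma_v^s(G_v(n)G_v(n+1))$ is precisely the factor of $\mathbf{g}_v$ spanning the binary positions $[nB,(n+2)B)$. Hence any monochromatic progression of difference $d$ occurring inside $\gamma_v^s(GH)$ occurs, at a suitably shifted position, in $\mathbf{g}_v$ itself, so its length is at most $A_v(d)$; taking the maximum over all $GH \in \Sub_2(\mathbf{G}_v)$ gives $M \le A_v(d)$.

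For the reverse inequality I would start from a progression realizing $A_v(d)$, say the terms $g_v(n), g_v(n+d), \dots, g_v(n+(A_v(d)-1)d)$, all of whose positions lie in a window of length $(A_v(d)-1)d + 1$. The a priori bound \eqref{eq:a_priori_bound} gives $A_v(d) \le 2^{\ell(d)+|v|-\nu_2(d)-1}$ (this is exactly where the exclusion of $d = 2^k-1$ for $v=1$ enters, since that case of \eqref{eq:a_priori_bound} rests on Theorem~\ref{thm:Par17}), so using $d < 2^{\ell(d)}$ the window has length
$$(A_v(d)-1)d + 1 \le A_v(d)\cdot d < 2^{\ell(d)+|v|-\nu_2(d)-1}\cdot 2^{\ell(d)} = 2^{2\ell(d)+|v|-\nu_2(d)-1} = B.$$
Now decompose $\mathbf{g}_v = \gamma_v^s(G_v(0))\,\gamma_v^s(G_v(1))\cdots$ into consecutive blocks, each of the common length $B$. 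Since the window containing the progression has length strictly less than $B$, it meets at most two consecutive blocks and is therefore contained in $\gamma_v^s(G_v(n)G_v(n+1))$ for some $n$. As $G_v(n)G_v(n+1) \in \Sub_2(\mathbf{G}_v)$, the whole progression occurs in one of the admissible words, giving $A_v(d) \le M$, and combining the two inequalities yields $A_v(d) = M$.

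The bulk of the work is routine length bookkeeping: verifying that the choice $s = 2\ell(d)-\nu_2(d)$ is precisely the one for which the block length $B$ matches the a priori window bound $2^{2\ell(d)+|v|-\nu_2(d)-1}$, and that a factor of length $<B$ can straddle only two consecutive blocks of length $B$. The one point that deserves genuine care — and the main, if modest, obstacle — is justifying that the decomposition of $\mathbf{g}_v$ into the pieces $\gamma_v^s(G_v(n))$ is an honest tiling into equal-length blocks whose consecutive concatenations are exactly the words $\gamma_v^s(GH)$ with $GH \in \Sub_2(\mathbf{G}_v)$; this is where the fixed-point identity $\gamma_v^s(\mathbf{G}_v)=\mathbf{G}_v$ together with the $2$-uniformity of $\gamma_v$ is used, ensuring all blocks share the length $B$ and that no admissible pair $GH$ is missed.
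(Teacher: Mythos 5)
Your proof is correct and follows essentially the same route as the paper: the a priori bound \eqref{eq:a_priori_bound} confines a maximal progression to a window of length $(A_v(d)-1)d+1 < 2^{2\ell(d)+|v|-\nu_2(d)-1} = 2^{s+|v|-1}$, which the uniform tiling of $\mathbf{g}_v$ by the blocks $\gamma_v^s(G_v(n))$ then places inside some $\gamma_v^s(G_v(n)G_v(n+1))$, exactly as in the discussion preceding the proposition. The only (welcome) difference is that you also spell out the converse inequality $M \leq A_v(d)$, which the paper leaves implicit because each $\gamma_v^s(GH)$ with $GH \in \Sub_2(\mathbf{G}_v)$ is visibly a factor of $\mathbf{g}_v$.
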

This essentially reduces our task to finding all subwords of length $2$ of a fixed point of a $2$-uniform morphism. Recall that a substitution $\varphi \colon \Sigma^* \to \Sigma^*$ is a $2$-uniform morphism if $|\varphi(a)| = 2$ for all $a \in \Sigma$. The following proposition is rather standard, however we include a proof for completeness.

\begin{prop}
Let $\Sigma$ be a finite alphabet. Let $\mathbf{a} = (a_{n})_{n \geq 0}$ be a fixed point of a $2$-uniform morphism $\varphi \colon \Sigma^* \to \Sigma^*$. For $k \geq 1$ define
$$  S_k = \Sub_2(\varphi^k(a_0))$$ 
and assume that $S_{K+1} = S_{K}$ for some $K$. Then we have
$$\Sub_2(\mathbf{a})= S_K.$$
\end{prop}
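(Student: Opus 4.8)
The plan is to exploit the self-similar structure of $\mathbf{a}$ and show that the chain $(S_k)_{k \ge 1}$ is nondecreasing and governed by a recursion that depends only on the previous term, so that a single coincidence $S_{K+1}=S_K$ forces the chain to be constant from $K$ onward.

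First I would record two easy reductions. Since $a_0$ is a prefix of $\varphi(a_0)$, applying $\varphi^k$ shows that $\varphi^k(a_0)$ is a prefix of $\varphi^{k+1}(a_0)$; hence $S_k \subseteq S_{k+1}$, and $(S_k)$ is a nondecreasing chain of subsets of the finite set of length-$2$ words over $\Sigma$. Moreover, the prefixes $\varphi^k(a_0)$ exhaust $\mathbf{a}$ (their lengths $2^k$ tend to infinity), so every length-$2$ factor of $\mathbf{a}$ lies in $\varphi^k(a_0)$ for all large $k$, while conversely each $S_k \subseteq \Sub_2(\mathbf{a})$. Therefore $\Sub_2(\mathbf{a}) = \bigcup_{k \ge 1} S_k$, and it suffices to control this union.

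The heart of the argument is to prove that $S_{k+1}$ is completely determined by $S_k$. Writing $\varphi^k(a_0) = b_0 b_1 \cdots b_{L-1}$ with $L = 2^k \ge 2$, one has $\varphi^{k+1}(a_0) = \varphi(b_0)\varphi(b_1)\cdots\varphi(b_{L-1})$, a concatenation of blocks each of length $2$. Reading off length-$2$ factors according to whether they begin at an even or an odd position, I would split them into the \emph{internal} pairs $\varphi(b_i)$ and the \emph{junction} pairs formed by the last letter of $\varphi(b_i)$ followed by the first letter of $\varphi(b_{i+1})$. The internal pairs form exactly $\{\varphi(b) : b \text{ a letter of } \varphi^k(a_0)\}$, and since $L \ge 2$ every such letter occurs inside some length-$2$ factor, so this set is recoverable from $S_k$. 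Each junction pair depends only on the adjacent pair $b_i b_{i+1} \in S_k$; this is exactly where $2$-uniformity is essential, as each block has length precisely $2$ and the junction is a fixed function of the letter pair straddling it. Consequently $S_{k+1} = F(S_k)$ for a single map $F$ independent of $k$, valid for all $k \ge 1$. I expect this recursion to be the main obstacle: the work lies in verifying that no information beyond $S_k$ and the letters it involves is needed to reconstruct $S_{k+1}$.

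Once the recursion is in hand, the conclusion follows at once. The hypothesis $S_{K+1}=S_K$ says $F(S_K)=S_K$, so $S_K$ is a fixed point of $F$; by induction $S_{K+j}=F^{j}(S_K)=S_K$ for every $j \ge 0$. Hence $S_k=S_K$ for all $k \ge K$, while $S_k \subseteq S_K$ for $k \le K$ by monotonicity. Combining this with $\Sub_2(\mathbf{a}) = \bigcup_{k \ge 1} S_k$ gives $\Sub_2(\mathbf{a}) = S_K$, as claimed.
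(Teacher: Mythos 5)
Your proposal is correct and takes essentially the same route as the paper: your explicit transition map $F$ with $S_{k+1}=F(S_k)$ is precisely the paper's (implicit) inductive step, namely that by $2$-uniformity every $w \in S_{k+1}$ is a subword of $\varphi(u)$ for some $u \in S_k$ and conversely $\Sub_2(\varphi(u)) \subseteq S_{k+1}$ for each $u \in S_k$. Your fixed-point iteration $S_{K+j}=F^j(S_K)=S_K$ is the same induction the paper runs directly on the equalities $S_{K+n+1}=S_{K+n}$, combined with the identical reductions $S_k \subseteq S_{k+1}$ and $\Sub_2(\mathbf{a})=\bigcup_{k\geq 1} S_k$.
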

\begin{proof}
Since the words $\varphi^k(a_0)$ tend to $\mathbf{a}$, we have
$$  \Sub_2(\mathbf{a}) = \bigcup_{k=1}^{\infty} S_k.  $$
Since also $S_k \subset S_{k+1}$, it is sufficient to prove that $S_{K+n+1} = S_{K+n}$ for all $n \in \N$. The case $n=1$ is our assumption. Now, suppose that the claim holds for some $n$ and choose any word $w \in S_{K+n+2}$. Then $w$ is a subword of $\varphi(u)$ for some $u \in S_{K+n+1}$. But by the inductive assumption we have $u \in S_{K+n}$ and thus also $  w \in S_{K+n+1}$. The result follows.
\end{proof}

Since there are only finitely many distinct words of length $2$ we eventually get $S_{k+1} = S_k$. 
In our case this means that the subwords of $\mathbf{G}_v$ of length $2$ can be effectively computed by considering $\gamma_v^k(G_0)$ for subsequent $k$.

The approach presented above can be improved in a few ways from the computational point of view. Firstly, the amount calculations required to compute $A_v(d)$ according to the method outlined above depends on the length of the binary expansion of $d$, regardless of the actual value $A_v(d)$. However, based on smaller-scale calculations, one can observe that most values $A_v(d)$ are small. Therefore, it is beneficial to first filter out $d$ yielding these values. In general, assume that $f \colon \N_+ \to \R_{+}$ is some function and we would like to compute the values $A_v(d)$ satisfying the inequality $A_v(d) \leq f(d)$. By the same calculation as before, a monochromatic progression of difference $d$ and length greater than $f(d)$ would have to appear in some block $\gamma_v^s(GH)$, where $GH \in \Sub_2(\mathbf{G}_v)$ and $s = \lceil \log_2 (f(d)d) \rceil - |v| + 1$. If for given $d$ this is not the case, then we obtain the value $A_v(d)$. This process can be repeated for the remaining $d$ using another function $f_1$, and so on. As the final bound one can use the general inequality \eqref{eq:a_priori_bound}. In our calculations we have used $f(d) = 20, f_1(d)=35, f_2(d) = d$ (or $f_2(d)=d+5$ for the Thue--Morse sequence) with considerable improvement in calculation time. 

Secondly, parts (i) and (iii) of Proposition \ref{prop:g_v_substitution} imply that $GH \in \Sub_2(\mathbf{G}_v)$ if and only if $\overline{GH} \in \Sub_2(\mathbf{G}_v)$. Since negation only affects the color but not the length of monochromatic arithmetic progressions, it is sufficient to search only one of $\gamma_v^s(GH), \gamma_v^s(\overline{GH})$ for each $GH \in \Sub_2(\mathbf{G}_v)$. This essentially cuts the task in half.

Finally, in the case of the Thue--Morse sequence we can restrict our attention to $d$ odd by the equality $A_{\mathbf{t}}(2d) = A_{\mathbf{t}}(d)$. The same reduction can be done for the Rudin--Shapiro sequence by Proposition \ref{prop:d_odd}, and possibly other sequences $\mathbf{g}_v$ (see Problem \ref{prob:odd_d}).
 
To conclude this section, we discuss the (slight) modifications that need to be made when $v=0^i$ for some $i \geq 1$. In particular, we see that Proposition \ref{prop:g_v_substitution}(ii) does not hold for $v=0^i$, since for example $G_v(0) = G_v(1) = 0^{2^{i-1}}$ but $G_v(2 \cdot 0) = 0^{2^{i-1}} \neq 10^{2^{i-1}-1} = G_v(2\cdot 1)$. In order to deal with this, we define $\gamma_v$ on the set $\Sigma_v$ with an additional element $X$, which is later mapped to the block $G_v(0)$.

\begin{prop} \label{prop:g_v_substitution2}
If $v = 0^i$ for some $i \geq 1$, then we have the following.
\begin{enumerate}[label={\textup{(\roman*)}}]
\item $\Sigma_v = \{G_v(l), \overline{G_v(l)}: 0 \leq l \leq 2^{|v|-1}\}$.
\item Define the substitution $\gamma_v \colon (\Sigma_v \cup \{X\})^* \to (\Sigma_v \cup \{X\})^*$ by
\begin{align*}
\gamma_v(X)  &= X G_v(1), \\
\gamma_v(G_v(n)) &= G_v(2n) G_v(2n+1), \quad n \geq 1.
\end{align*}
The word $\mathbf{g}_v$ is the image of the fixed point of $\gamma_v$ starting with $X$ under the map $X \mapsto G_v(0)$ and $G \mapsto G$ for $G \in \Sigma_v$.
\item $\gamma_v(\overline{G}) = \overline{\gamma_v(G)}$ for any $G \in \Sigma_v$
\end{enumerate}
\end{prop}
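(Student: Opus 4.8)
The plan is to follow the proof of Proposition~\ref{prop:g_v_substitution} line by line, tracing every discrepancy to a single feature of the pattern $v=0^i$: an occurrence of $0^i$ may straddle the boundary between a $2^{i-1}$-aligned block and the digits preceding it, and the contribution of such boundary occurrences behaves anomalously at the very first block. The tool replacing the congruences \eqref{eq:g_v_congruence2}--\eqref{eq:g_v_congruence3} is the pair of append-bit relations, valid for every $N\ge 1$,
\[
g_v(2N)\equiv g_v(N)+[\nu_2(N)\ge i-1]\pmod 2,\qquad g_v(2N+1)\equiv g_v(N)\pmod 2,
\]
where $[\,\cdot\,]$ denotes the Iverson bracket. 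These follow from $(2N)_2=(N)_2\,0$ and $(2N+1)_2=(N)_2\,1$: appending a $1$ creates no new occurrence of $0^i$, while appending a $0$ creates exactly one precisely when $(N)_2$ already ends in $0^{i-1}$, that is when $\nu_2(N)\ge i-1$. The hypothesis $N\ge 1$ is essential, since $(0)_2$ is not produced by appending to a shorter word; this is the origin of the anomaly at position $0$.

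For part (i) I write $n=2^{i-1}m+l$ with $0\le l<2^{i-1}$ and expand $(2^{i-1}n+j)_2=(n)_2\,w_j$, where $w_j$ is $j$ written in $i-1$ binary digits. As $\ell(l)\le i-1$ we have $e_v(l)=0$, and the number of occurrences of $0^i$ straddling the block boundary depends only on $\min(\nu_2(n),i-1)$ and on $j$. When $l\neq 0$ we have $\nu_2(n)=\nu_2(l)\le i-2$, so this straddle count coincides with the one attached to $l$ itself and the congruence \eqref{eq:g_v_congruence} survives verbatim, giving $G_v(n)\in\{G_v(l),\overline{G_v(l)}\}$ with $1\le l\le 2^{i-1}-1$. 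When $l=0$ the word $(n)_2$ ends in a run of at least $i-1$ zeros, so the straddle count is the one associated with a maximal run, namely the count occurring for $n=2^{i-1}$; hence $G_v(n)\in\{G_v(2^{i-1}),\overline{G_v(2^{i-1})}\}$. Together with $G_v(0)$ this proves the inclusion $\subseteq$ and accounts for the extra index $l=2^{i-1}$. For the reverse inclusion I exhibit, for each residue, an index $n\equiv l\pmod{2^{i-1}}$ with $e_v(n)$ odd---for instance $n=[\,1\,0^{i}\,1\,w_l\,]_2$, whose expansion has exactly one occurrence of $0^i$---and apply the appropriate congruence to obtain $\overline{G_v(l)}$, respectively $\overline{G_v(2^{i-1})}$.

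For part (ii) the append-bit relations give, for $n\ge 1$ and $0\le j<2^{i-1}$, that the letters of $G_v(2n)G_v(2n+1)$ in positions $2j$ and $2j+1$ equal $G_v(n)_j+[\,j=0\,]$ and $G_v(n)_j$ respectively, the correction $\nu_2(2^{i-1}n+j)\ge i-1$ occurring exactly when $j=0$. Thus $\gamma_v(G_v(n))$ is obtained from $G_v(n)$ by doubling every letter and flipping the first one; this depends only on the block value, so $\gamma_v$ is well defined on $\Sigma_v$, and part (iii) is immediate because doubling-and-flipping commutes with negation. The identical computation at $n=0$ would demand that the first letter of $G_v(0)G_v(1)$ be flipped, whereas in fact $G_v(0)G_v(1)=0^{2^{i}}$; this is the anomaly, and it forbids using the block $0^{2^{i-1}}=G_v(0)=G_v(1)$ as a letter that carries its own image. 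The symbol $X$ with $\gamma_v(X)=X\,G_v(1)$ repairs it: an easy induction yields $\gamma_v^{k}(X)=X\,G_v(1)\,G_v(2)\cdots G_v(2^{k}-1)$, so the fixed point of $\gamma_v$ beginning with $X$ is $X\,G_v(1)\,G_v(2)\cdots$, and applying $X\mapsto G_v(0)$ recovers $\mathbf{g}_v=G_v(0)G_v(1)G_v(2)\cdots$.

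The main obstacle is entirely bookkeeping: correctly isolating the boundary occurrences of $0^i$ and checking that they are insensitive to the high part of $n$ outside the single residue class $l=0$, where a maximal run of trailing zeros forces the use of the extra block $G_v(2^{i-1})$. Once the append-bit relations are established this is routine; the role of $X$ is merely to record that the leading block occupies position $0$, where there are no preceding digits to complete a straddling $0^i$. Parts (ii) and (iii) then present no difficulty, and the only genuine care is in part (i): verifying that the exhibited indices really have an odd occurrence count and the prescribed residue, and recording the identity $G_v(0)=G_v(1)=0^{2^{i-1}}$, which ensures that the reverse inclusion produces every negated block.
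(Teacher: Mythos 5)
Your proof is correct and takes essentially the same route as the paper's: the same decomposition $n=2^{i-1}m+l$ with the anomaly isolated at $l=0$ (your Iverson-bracket append-bit relations are exactly the paper's modified congruences, and your ``double every letter and flip the first'' description of $\gamma_v$ is its case-split congruence at $j=0$), and even the same witness $n=[1\,0^i\,1\,w_l]_2$ for producing the negated blocks. Your explicit induction $\gamma_v^k(X)=X\,G_v(1)\cdots G_v(2^k-1)$ merely spells out what the paper declares clear; no gaps.
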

\begin{proof}
The proof is similar as in Proposition \ref{prop:d_odd} so we primarily highlight the differences. In part (i) we again take $n \in \N$ and write $n = 2^{|v|-1} m + l$, where $0 \leq l < 2^{|v|-1}$. If $l \neq 0$, then the congruence \eqref{eq:g_v_congruence} holds and $G_v(n)$ is equal to one of $G_v(l), \overline{G_v(l)}$. If $l = 0$ and $m \neq 0$, then we instead get for $j=0,1,\ldots 2^{|v|}$ the congruence 
$$ g_v(2^{|v|-1}n+j) \equiv g_v(n) + g_v(2^{2|v|-2} + j). $$
It follows that $G_v(n)$ is equal to one of $G_v(2^{|v|-1}), \overline{G_v(2^{|v|-1})}$. In the remaining case $n=0$ the block $G_v(0)$ trivially belongs to the set on the right-hand side.

Conversely, we need to show that the blocks $\overline{G_v(l)}$ belong to $\Sigma_v$. If $0 < l < 2^{|v|-1}$, then we have $g_v(n)=1$ for $n =  2^{2|v|} + 2^{|v|-1} +l = [1v1(l)_2]_2$ and by \eqref{eq:g_v_congruence} we get  $\overline{G_v(l)} = G_v(n) \in \Sigma_v$. One can also check that 
$\overline{G_v(0)} = 1^{2^{|v|}} = G_v(2^{|v|+1}+1)$ and $\overline{G_v(2^{|v|-1})} =  G_v(2^{|v|})$, thus both blocks also belong to $\Sigma_v$.

In part (ii), if $n \geq 1$ the congruence \eqref{eq:g_v_congruence2} does not hold for $j=0$, and we get two cases:
 $$g_v(2^{|v|}n+2j) \equiv 
\begin{cases} 
g_v(2^{|v|-1}n) +1  \pmod{2} &\text{if } j=0, \\
g_v(2^{|v|-1}n+j) \pmod{2} &\text{if } 1 \leq j < 2^{|v|-1}.
\end{cases} $$
At the same time, the congruence \eqref{eq:g_v_congruence3} remains identical. In any case, we deduce that $\gamma_v(G)$ is well-defined for $G \in \Sigma_v$.  It is also clear that $\mathbf{g}_v$ is the image of the fixed point of $\gamma_v$ under the map given in the statement.
\end{proof}

The remaining part of the procedure for $v=0^{i}$ stays mostly the same as before. The only difference is that after finding all length $2$ subwords $GH$ in the fixed point of $\gamma_v$ and calculating $\gamma_v^s(GH)$ for suitable $s$, one needs to apply the map $X \mapsto G_v(0) = 0^{2^i}$ to the word $\gamma_v^s(X G_v(1))$. The improvements to the method discussed earlier apply as well. We also note that the case $v=0$ does not need to be considered separately due to the equality $A_{0}(d) = A_{\mathbf{t}}(d)$ of Proposition \ref{prop:A_0=A_t}.

\section{Description of the supplemental files}  \label{sec:supplemental}
The following supplemental files to this paper are available in the repository \cite{Git}.
\begin{enumerate}
\item \texttt{longest\_progressions.csv} \\
The file contains the values $A_v(d)$ for all patterns $v$ such that $|v| \leq 3$ and all differences $d \in [1,2^{12}+1]$. The first row consists of headings of columns (corresponding to $v$), while the $(d+1)$th row contains the values $A_v(d)$.
\item \texttt{longest\_progressions\_from\_0.csv} \\
The file contains the values $A_v(0,d)$ for all patterns $v$ such that $|v| \leq 4, v \neq 0^i$ and all differences $d \in [1,2^{20}+1]$. The first row consists of headings of columns (corresponding to $v$), while the $(d+1)$th row contains the values $A_v(0,d)$.
\item \texttt{section\_3\_algorithm.nb} \\
The file contains an implementation of the algorithm described in Section \ref{sec:main1}. The algorithm is then applied to the discussed cases, yielding results stated in Proposition \ref{prop:application_1} and \ref{prop:application_2}.
\end{enumerate}

\section*{Acknowledgements}
The research is supported by the grant of the National Science Centre (NCN), Poland, no.\ UMO-2020/37/N/ST1/02655.

\bibliographystyle{amsplain}
\bibliography{references}

\end{document}